\documentclass[a4paper, 10pt]{article}
\usepackage{tocloft}
\usepackage{comment}
\usepackage[margin = 1in]{geometry}
\usepackage{aliascnt}

\usepackage{parskip}
\usepackage{nomencl}

\usepackage{fancyhdr} 

\tocloftpagestyle{fancy}
\usepackage{ucs}
\usepackage{units}
\usepackage[T1]{fontenc}
\usepackage{amsmath}
\usepackage{thmtools}
\usepackage{listings}
\usepackage{amssymb} 
\usepackage{amsthm}
\usepackage{amsfonts}
\usepackage{graphicx}
\usepackage{microtype}
\usepackage{mathdots}
\usepackage{mathtools}
\usepackage[shortlabels]{enumitem}
\usepackage{yhmath}
\usepackage{marginnote}
\usepackage{mathrsfs}
\usepackage{bbm}

\usepackage{tikz}
\usepackage{tikz-cd}

\definecolor{linkgreen}{RGB}{33,100,60}
\definecolor{chilligreen}{RGB}{33,100,60}
\definecolor{chillired}{RGB}{140,00,10} 
\definecolor{chillidarkred}{RGB}{120,5,15}

\usepackage[colorlinks=true, citecolor = chilligreen, urlcolor = chillired, backref=page]{hyperref}

\newcommand{\Addresses}{{
  \bigskip
  \footnotesize

  R.~Quinn, \textsc{Utrecht Geometry Center, Universiteit Utrecht, The Netherlands}\\ \nopagebreak
  \textit{E-mail address}: \texttt{r.quinn@uu.nl}

  Q.~Zhu, \textsc{Max Planck Institute for Mathematics, Bonn, Germany}\\ \nopagebreak
  \textit{E-mail address}: \texttt{qzhu@mpim-bonn.mpg.de}

}}

\newcommand{\tb}{\textcolor{chillired}}

\newcommand{\GL}{\operatorname{GL}}

\newcommand{\B}{\mathrm B}
\newcommand{\C}{\mathscr C}

\newcommand{\Sc}{\mathcal S}

\renewcommand{\O}{\mathscr{O}}

\DeclareMathAlphabet{\mathbbold}{U}{bbold}{m}{n}

\newcommand{\E}{\mathbb E}  
\newcommand{\F}{\mathbb F}
\newcommand{\R}{\mathbb R}
\renewcommand{\S}{\mathbb S}

\newcommand{\Z}{\mathbb{Z}}

\usepackage{bbm}

\newcommand{\tmf}{\operatorname{tmf}}
\newcommand{\ku}{\operatorname{ku}}

\newcommand{\MU}{\operatorname{MU}}
\newcommand{\MUP}{\operatorname{MUP}}

\newcommand{\MUR}{{\MU_{\mathbb{R}}}}
\newcommand{\MUPR}{{\MUP_{\mathbb{R}}}}
\newcommand{\MUG}{{\MU^{(\!(G)\!)}}}

\newcommand{\MUGtwo}{{\MU_{(2)}^{(\!(G)\!)}}}
\newcommand{\RG}{{R^{(\!(G)\!)}}}

\newcommand{\BPG}{{\BP^{(\!(G)\!)}}}
\newcommand{\BPGm}{{\BP^{(\!(G)\!)}}\langle m\rangle}

\newcommand{\CPR}{\mathbb{CP}_{\mathbb{R}}}

\newcommand{\Sp}{\mathrm{Sp}}
\newcommand{\Alg}{\mathrm{Alg}}

\newcommand{\LMod}{\mathrm{LMod}}

\newcommand{\Op}{\mathrm{Op}}

\newcommand{\BP}{\operatorname{BP}}

\newcommand{\BPR}{{\BP_{\mathbb{R}}}}
\newcommand{\BPRn}{{\BP_{\mathbb{R}}\langle n\rangle}}
\newcommand{\BU}{\operatorname{BU}}
\newcommand{\BSU}{\operatorname{BSU}}
\newcommand{\BUR}{{\BU_{\mathbb{R}}}}
\newcommand{\BSUR}{{\BSU_{\mathbb{R}}}}
\newcommand{\BGL}{\operatorname{BGL}}

\newcommand{\RO}{\operatorname{RO}}

\newcommand{\Map}{\operatorname{Map}}

\newcommand{\Pic}{\operatorname{Pic}}
\newcommand{\Th}{\operatorname{Th}}

\newcommand{\Res}{\operatorname{Res}}

\renewcommand{\phi}{\varphi}

\newcommand{\map}{\operatorname{map}}
\newcommand{\Infl}{\operatorname{Infl}}

\newcommand{\gl}{\mathrm{gl}}

\newcommand{\Coind}{\operatorname{Coind}}

\newcommand{\gp}{\mathrm{gp}}

\newcommand{\kuR}{\ku_{\R}}

\newcommand{\SU}{\operatorname{SU}}

\newcommand{\EO}{\mathrm{EO}}

\DeclareMathOperator*{\colim}{colim}

\makeatletter
\DeclareRobustCommand{\myuline}[2][0pt]{%
  \ifmmode
    \uline{\hphantom{#2}\kern-#1}%
    \kern#1%
    \mathllap{\mathpalette\my@cont@{#2}}%
  \else
    \uline{\phantom{#2}\kern-#1}%
    \kern#1%
    \llap{\contour{white}{#2}}%
  \fi
}

\newcommand{\ul}{\myuline}
\newcommand{\ol}{\overline}

\usepackage{stmaryrd}

\newcommand{\MURI}{\MUR[\overline{x}_i:i\in I]}

\DeclareFontFamily{U}{min}{}
\DeclareFontShape{U}{min}{m}{n}{<-> udmj30}{}

\usepackage[normalem]{ulem}
\usepackage{contour}
\usepackage{mathtools} 

\contourlength{0.8pt}

\newcommand{\my@cont@}[2]{\contour{white}{\mbox{$\m@th#1#2$}}}
\makeatother
\newcommand{\THR}{\mathrm{THR}}

\usepackage{fontawesome5}

\newcommand{\notehelper}[3]{\textcolor{#3}{$\blacksquare$}\marginpar{\ifodd\thepage\raggedright\else\raggedleft\fi\color{#3}\tiny \textbf{#2:} #1}}

\usepackage[nameinlink,capitalise]{cleveref}
\renewcommand*{\backref}[1]{}
\renewcommand*{\backrefalt}[4]{%
	\ifcase #1%
	\or (Cited on page~#2.)%
	\else (Cited on pages~#2.)%
	\fi%
}

\theoremstyle{definition}

\newtheorem{definition}{Definition}[section]

\newaliascnt{question}{definition}

\aliascntresetthe{question}

\crefname{question}{Question}{Questions}

\newaliascnt{construction}{definition}
\newtheorem{construction}[construction]{Construction}
\aliascntresetthe{construction}
\crefname{construction}{Construction}{Constructions}

\newaliascnt{observation}{definition}
\newtheorem{observation}[observation]{Observation}
\aliascntresetthe{observation}
\crefname{observation}{Observation}{Observations}

\newaliascnt{conjecture}{definition}

\aliascntresetthe{conjecture}
\crefname{conjecture}{Conjecture}{Conjectures}

\newaliascnt{lemma}{definition}
\newtheorem{lemma}[lemma]{Lemma}
\aliascntresetthe{lemma}
\crefname{lemma}{Lemma}{Lemmas}

\newaliascnt{fact}{definition}

\aliascntresetthe{fact}
\crefname{fact}{Fact}{Facts}

\newaliascnt{recollection}{definition}
\newtheorem{recollection}[recollection]{Recollection}
\aliascntresetthe{recollection}
\crefname{recollection}{recollection}{Recollection}

\newaliascnt{setting}{definition}

\aliascntresetthe{setting}
\crefname{setting}{Setting}{Settings}

\newaliascnt{notation}{definition}
\newtheorem{notation}[notation]{Notation}
\aliascntresetthe{notation}
\crefname{notation}{Notation}{Notations}

\newaliascnt{remark}{definition}
\newtheorem{remark}[remark]{Remark}
\aliascntresetthe{remark}
\crefname{remark}{Remark}{Remarks}

\newaliascnt{corollary}{definition}
\newtheorem{corollary}[corollary]{Corollary}
\aliascntresetthe{corollary}
\crefname{corollary}{Corollary}{Corollaries}

\newaliascnt{theorem}{definition}
\newtheorem{theorem}[theorem]{Theorem}
\aliascntresetthe{theorem}
\crefname{theorem}{Theorem}{Theorems}

\newaliascnt{proposition}{definition}
\newtheorem{proposition}[proposition]{Proposition}
\aliascntresetthe{proposition}
\crefname{proposition}{Proposition}{Propositions}

\newaliascnt{example}{definition}
\newtheorem{example}[example]{Example}
\aliascntresetthe{example}
\crefname{example}{Example}{Examples}

\aliascntresetthe{recollection}
\crefname{recollection}{Recollection}{Recollections}

\newaliascnt{folklore}{definition}
 
\aliascntresetthe{folklore}
\crefname{folklore}{assumption}{assumptions} 
\Crefname{folklore}{Assumption}{Assumptions}

\newaliascnt{assumption}{definition}

\aliascntresetthe{assumption}
\crefname{assumption}{Assumption}{Assumptions}

\newtheorem{mainthm}{Theorem}

\newaliascnt{maincor}{mainthm}
\newtheorem{maincor}[maincor]{Corollary}
\aliascntresetthe{maincor}
\crefname{maincor}{Corollary}{Corollaries}

\newaliascnt{mainex}{mainthm}

\aliascntresetthe{mainex}
\crefname{mainex}{Example}{Examples}

\newaliascnt{mainrem}{mainthm}
\newtheorem{mainrem}[mainrem]{Remark}
\aliascntresetthe{mainrem}
\crefname{mainrem}{Remark}{Remarks}

\newtheorem*{theorem*}{Theorem}
\newtheorem*{example*}{Example}
\newtheorem*{question*}{Question}
\newtheorem*{problem*}{Problem}
\newtheorem*{thmG*}{Theorem G}

\begin{document}

	\title{\vspace{-1cm}\textbf{Structured Quotients in Real Homotopy Theory}}
	\author{\textsc{Ryan Quinn \& Qi Zhu}}
    \date{} 
	\maketitle

            \hypersetup{linkcolor=chilligreen}

		\begin{abstract}
			\noindent We equip quotients of Real bordism $\MUR$ with the structure of a ring with involution, an important source of examples being the truncated Real Brown--Peterson spectra $\BPR \langle n \rangle$. Motivated by this, we orient Lubin--Tate theory by higher truncated Brown--Peterson spectra, which is a key input for Meier--Shi--Zeng's transchromatic isomorphism theorem. We use these orientations to characterize the higher truncated Brown--Peterson spectra that are equivalent to a form of Lubin--Tate theory after chromatic localization.
		\end{abstract}
\hypersetup{linkcolor=chilligreen}

\begin{figure}[ht!]
\centering
\includegraphics[width=130mm]{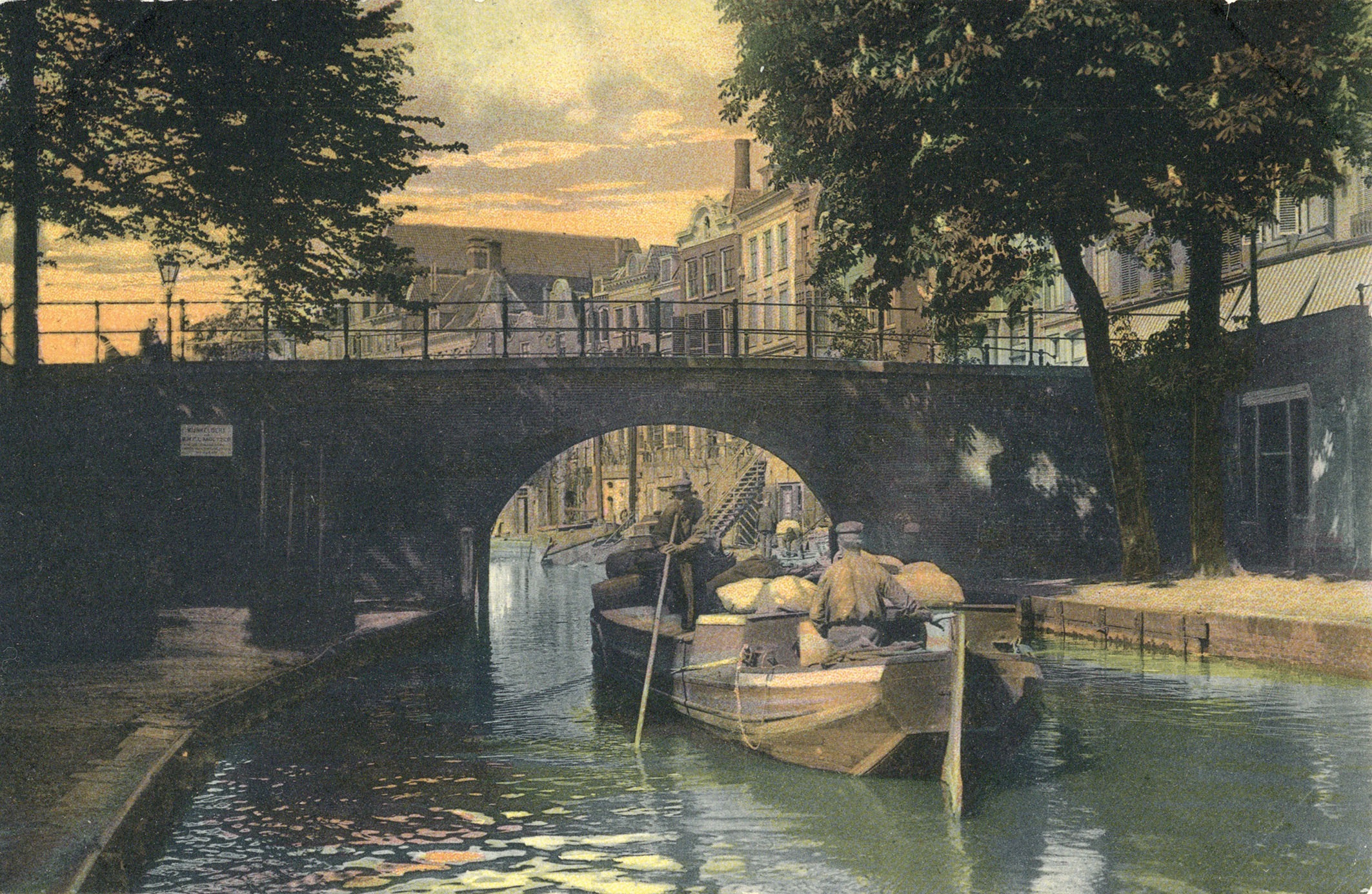}
\end{figure}


\begingroup

\renewcommand\thefootnote{}
\footnote{Gezicht op de Oudegracht te Utrecht -- 1895.}%
\footnote{\emph{Date}: \today}
\addtocounter{footnote}{-2}
\endgroup
\thispagestyle{empty} 
\setcounter{tocdepth}{2}    
\setcounter{secnumdepth}{4} 
\tableofcontents

\thispagestyle{empty}
\newpage
\section{Introduction}
Quotients in higher algebra are much more delicate than their counterparts in classical algebra, e.g.~the Moore spectrum $\S/2$ famously does not even admit a unital multiplication. Nonetheless, there are nowadays many techniques for constructing multiplicative structures on quotients in higher algebra  \cite{robinson1989obstruction, ekmm1997stablehomotopy,stricklandMU,lazarev2001homotopy, bakerJeanneret2002brave, duggerShipley2006postnikov, Angeltveit_2008,hopkinslurieambi,Basu_2017,hahn2018quotientsrings,hahnWilson2022redshift,burklund2022multiplicativestructuresmoorespectra, rognes2025localizationsequenceslogarithmictopological, willumsgaard2026obstructionsassociativitystablehomotopy}. However, the same cannot be said for equivariant higher algebra. Developing such a theory is the main goal of this article, particularly for quotients of the Araki--Landweber Real bordism theory $\MUR$.

\subsubsection*{Structured quotients of Real bordism}
Many $C_2$-spectra of interest can be constructed as quotients from $\MUR$ -- most notably, these include the Real Brown--Peterson theory $\BPR$ as well as its truncated variants $\BPRn$, see \cref{example: BPRJ}. These examples generalize familiar $C_2$-spectra of low chromatic heights: for $n = 1$ it specializes to Atiyah's $K$-theory with Reality $\ku_{\R(2)} \simeq \BPR \langle 1 \rangle$ and for $n = 2$ to $\tmf_1(3)_{(2)} \simeq \BPR \langle 2 \rangle$ by \cite{hillmeier2017}.\footnote{A $C_2$-action on $\tmf_1(3)$ is inherited through an algebro-geometrically defined action on the compactified moduli stack $\ol{\mathcal{M}}_1(3)$ of elliptic curves with a chosen point of order $3$, see \cite{hillmeier2017}.} Moreover, one obtains higher variants $\BPG$ and $\BPGm$ for a finite group $G \geq C_2$ through certain norm constructions (\cref{example: BPGJ}). While already intrinsically interesting objects, these can be used to construct various useful non-equivariant spectra such as connective models of higher real $K$-theories \cite{beaudryHillShiZeng2021modelsLubinTate, carrick2025higherrealktheoriesfinite}, an insight from Hill--Hopkins--Ravenel's resolution of the Kervaire invariant one problem \cite{HHR16}.

Our first main result concerns the structure on such objects and produces the structure of a $C_2$-ring spectrum with involution on all of these quotients, modelled by $\E_{\sigma}$-algebras for the $C_2$-sign representation $\sigma$. This is particularly relevant in the study of Real algebraic $K$-theory, which can be approached through Real trace methods. Indeed, Real topological Hochschild homology \cite{hesselholtMadsen2015Realalgebra, dottoMoiPatchkoriaReeh2021THR} is precisely defined for $\E_{\sigma}$-algebras. As such, Angelini-Knoll--Kong--Quigley compute the Real syntomic cohomology of \(\BPRn\) for \(-1\leq n\leq 2\), using that $C_2$-commutative models of these are available, see \cite[Theorem D]{gabe2025realsyntomiccohomology}.
They state that the only obstruction to an extension to all \(n\) is the lack of \(\E_\sigma\)-algebra structures on \(\BPRn\) for \(n>2\), see \cite[p.~4]{gabe2025realsyntomiccohomology}. Our theorem produces this structure, therefore enabling a study of $\THR(\BPRn)$ and $\THR(\BPRn/\MUR)$ as well as Angelini-Knoll--Kong--Quigley's program for $\BPRn$.

Let us now state a precise version of our first main result. Throughout, let $G = C_{2^n}$ for some $n$ and let $\rho$ be the regular $C_2$-representation. For every $G$-representation $V$ there exists an equivariant little disk operad $\E_V$ controlling equivariant multiplicative structures \cite{blumberghill2015operadic, horev2019genuineequivariantfactorizationhomology, Hill22disks}. Moreover, when norming up an $\E_V$-algebra $R$ along $G \leq G'$, then $N_G^{G'} R$ naturally inherits the structure of a so-called $\Coind_{G}^{G'} \E_V$-algebra, where $\Coind_G^{G'} \colon \Op_{G, \infty} \to \Op_{G', \infty}$ is the right adjoint to the restriction functor \cite[Construction 2.2.5]{quinnZhu2026multiplicativeequivariantthomspectra}. This applies in particular to examples like $\MUG \coloneqq N_{C_2}^G \MUR$ and $\BPG \coloneqq N_{C_2}^G \BPR$.
\begin{mainthm}[\cref{thm:MUR_quotients}, \cref{thm:MUG_quotients}]\label{main:MUR_quotients}
    Let $I \subseteq \Z_{\geq 1}$ be an indexing set.
    \begin{enumerate}[(i)]
        \item Let \(\{\overline{x}_{i}\}_{i\in I}\) be a collection of elements with \(\overline{x}_i\in \pi^{C_2}_{i\rho}\MUR\).
        Then \(\MUR/(\overline{x}_i:i\in I)\) admits an \(\E_\sigma\)-\(\MUR\)-algebra structure.
        \item Let \(\{\overline{x}_{i}\}_{i\in I}\) be a collection of elements with \(\overline{x}_i\in \pi^{C_2}_{i\rho}\MUG\).
        Then \(\MUG/(G\cdot \overline{x}_i:i\in I)\) admits a \(\Coind^G_{C_2}\E_\sigma\)-\(\MUG\)-algebra structure.
    \end{enumerate}
\end{mainthm}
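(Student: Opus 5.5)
Following Hahn--Wilson's approach to quotients of $\MU$, we would realise the quotient as a relative Thom spectrum, or as a base change along a map of structured polynomial algebras, so that the desired structure is inherited. Non-equivariantly, the key input is that $\S[t]=\Sigma^\infty_+\mathbb{N}$, with $|t|=2i$, admits an $\E_2$-map to $\MU$ sending $t\mapsto x$. Then $\MU/x \simeq \MU\otimes_{\S[t]}\S$ is an $\E_1$-$\MU$-algebra, and more generally $\E_{n-1}$ if the source is $\E_n$. The Real analogue replaces $\E_2$ by $\E_\rho$ and the sphere $S^{2i}$ by $S^{i\rho}$. We would construct the free $\E_\rho$-algebra (or more precisely a "polynomial" $\E_\rho$-algebra) $\S[\overline{t}_i]$ on a class in degree $i\rho$. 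The natural model is the $C_2$-Thom spectrum over $\Omega^\rho S^{i\rho+1}$ or a James-type construction. A cleaner choice is $\S[\overline{t}_i]=\Sigma^\infty_+ J$, where $J$ is the free $\E_\rho$-monoid on $S^{i\rho}$, truncated to the monoid $\mathbb{N}$-graded pieces $S^{ki\rho}$.

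\textbf{Step 1.} Produce an $\E_\rho$-algebra $\S[\overline{t}_i]$ whose underlying $C_2$-spectrum is $\bigoplus_{k\ge0}S^{ki\rho}$. We would like a genuinely $\E_\rho$ structure, not merely $\E_\sigma$, so that killing leaves $\E_\sigma$. Following Hahn--Wilson, we expect the argument to go through the observation that $\E_\rho$-algebras in graded objects can be built from $\E_\rho$-monoids in spaces of Thom type. Concretely, we would take the Thom spectrum of the $C_2$-map $\Omega^\rho S^{\rho+1}\to \BGL_1\S$ (or the Real analogue of the Hopkins--Mahowald-type construction), or alternatively work in the $\E_\rho$-monoidal category of $\mathbb{N}$-graded $C_2$-spectra using a Day convolution, where "polynomial on $S^{i\rho}$ in weight $1$" should be $\E_\rho$ by an equivariant Dunn additivity $\E_\rho\simeq \E_1\otimes\E_\sigma$ argument.

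\textbf{Step 2.} Show that any class $\overline{x}_i\in\pi^{C_2}_{i\rho}\MUR$ extends to an $\E_\rho$-map $\S[\overline{t}_i]\to\MUR$. Since $\MUR$ is $C_2$-$\E_\infty$ and hence $\E_\rho$, this reduces to an obstruction-theory or freeness statement. This is the \emph{main obstacle}, because $\S[\overline{t}_i]$ is not free. We would try to prove that $\MUR$-module maps out of $\MUR\otimes\S[\overline{t}_i]$ behave like maps out of the free algebra, using evenness of $\MUR$ in the Real sense: its slices are concentrated in degrees $k\rho$, and $\pi^{C_2}_{k\rho-1}$-type groups vanish. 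Obstructions to extending from the free $\E_\rho$-algebra to the polynomial one should live in such vanishing groups, mirroring Hahn--Wilson's use of evenness of $\MU$.

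\textbf{Step 3.} Take the tensor product over $i\in I$, giving $\bigotimes_i\S[\overline{t}_i]\to\MUR$ (a colimit of finite tensor products in $\E_\rho$-algebras). Form $\MUR\otimes_{\S[\overline{t}_i]}\S$, where the augmentation sends $\overline{t}_i\mapsto0$. The relative tensor product of $\E_\rho$-algebras over an $\E_\rho$-algebra is an $\E_{\rho-1}=\E_\sigma$-algebra by additivity $\E_\rho\simeq\E_1\otimes\E_\sigma$, and it is an algebra over $\MUR$. A cofibre computation identifies it with $\MUR/(\overline{x}_i)$, using that $\S[\overline{t}]\otimes_{\S[\overline{t}]}$-resolutions give cofibres of multiplication by $\overline{x}_i$, and that the sequence is regular since $\MUR$ is Real-oriented with polynomial-type homotopy.

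\textbf{Part (ii).} We would apply $N_{C_2}^G$ to the maps from Step 2. Norms are symmetric monoidal and send $\E_\rho$-algebras to $\Coind^G_{C_2}\E_\rho$-algebras. A class $\overline{x}_i\in\pi^{C_2}_{i\rho}\MUG$ gives, by adjunction of the norm with restriction for commutative $\MUG$, a $\Coind^G_{C_2}\E_\rho$-map $N_{C_2}^G\S[\overline{t}_i]\to\MUG$ that sends the generators to the $G$-translates of $\overline{x}_i$. The base change along the augmentation then yields a $\Coind^G_{C_2}\E_\sigma$-$\MUG$-algebra, identified with $\MUG/(G\cdot\overline{x}_i)$.
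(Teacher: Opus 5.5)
Your outer architecture matches the paper's: $\E_\rho$-maps out of a polynomial algebra, then the $\E_{\R\oplus V}\Rightarrow\E_V$ base change, then norming for (ii). But there is a genuine gap, because the two steps that carry the content are not supplied.

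\textbf{Step 1.} None of your candidates gives an $\E_\rho$-ring with underlying $\bigoplus_k S^{ki\rho}$. A Thom spectrum over $\Omega^\rho S^{i\rho+1}$ (or $\Omega^\rho S^{\rho+1}$) has, by the mod $2$ Thom isomorphism, the underlying mod $2$ homology of $\Omega^2 S^{2i+1}$ (resp.\ $\Omega^2S^3$). That is far from $\F_2[t]$ with $|t|=2i$. ``Truncating'' the free $\E_\rho$-monoid on $S^{i\rho}$ to the pieces $S^{ki\rho}$ is not an operation on $\E_\rho$-monoids. Dunn additivity does not upgrade a free $\E_1$-algebra to an $\E_\rho$-algebra; you would need a coherent $\E_\rho$-monoidal functor $\underline{\mathbb N}\to\underline{\Sp}^{C_2}$ with $k\mapsto S^{ki\rho}$, which is extra structure you have to construct.

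\textbf{Step 2.} This step, which you flag yourself, is left as a hope, and it is the more serious problem. Since $\S[\ol t_i]$ is not free as an $\E_\rho$-algebra, an $\E_\rho$-map with $\ol t_i\mapsto\ol x_i$ needs a genuine equivariant obstruction theory. That means an $\E_\rho$-cell structure on $\S[\ol t_i]$, whose cells involve norms and need not sit in degrees $k\rho$, together with vanishing of the groups of $\MUR$ that receive the obstructions. Knowing that the slices of $\MUR$ are in degrees $k\rho$ does not provide this.

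\textbf{Step 3.} Separately, the regularity claim is false for arbitrary $\ol x_i$ (e.g.\ $\ol x_2=\ol x_1^2$). It is also unnecessary, since the quotient is defined as a relative tensor product.

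\textbf{What the paper does instead.} The missing idea is to avoid single-generator polynomial algebras over $\S$ altogether.
\begin{itemize}
\item The paper uses $R[\BUR]=R\otimes\Sigma^\infty_+\BUR$, where $R=\MUR[y_j : j\notin I]$ is a designer polynomial algebra. It is made $\E_\infty^{C_2}$ over $\MUR$ by left Kan extending the Real $J$-homomorphism along $\underline{\Z}\times\BUR\to\underline{\Z}$.
\item Real orientability gives $R[\BUR]\simeq R[z_1,z_2,\ldots]$ with $|z_k|=k\rho$. Sending the $z_k$ to the $\ol x_i$ and the $y_j$, respectively to $0$, gives two $\E_1$-maps to $R$, and the relative tensor product along them is $\MUR/(\ol x_i : i\in I)$.
\item Because $\BUR$ is grouplike, the equivariant recognition principle identifies $\E_\rho$-maps $\Sigma^\infty_+\BUR\to R$ with maps of $C_2$-spectra $\Sigma^\infty\BSUR\to\Sigma^\rho\gl_1 R$. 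The lifting problem thus becomes stable.
\item The underlying $\E_1$-map is promoted to $\E_2$ by Hahn--Yuan. It is then lifted uniquely to $\E_\rho$ by a slice-tower argument, using that $R$ is strongly even and that $H^\star(\BSUR;\underline{\Z})$ is strongly even (Roytman).
\end{itemize}
This mechanism replaces your Step 2. Part (ii) then runs as you describe, applied to $\E_\rho$-maps $R[\BUR]\to\Res^G_{C_2}N^G_{C_2}R$ followed by norming.
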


\begin{mainrem} \label{main: remark generic}
    In general, an $\E_{\sigma}$-algebra structure on a $C_2$-spectrum $R$ provides an $\E_1$-algebra structure on $\Res_e^{C_2} R$ as well as an $N_e^{C_2} \Res_e^{C_2} R$-$N_e^{C_2} \Res_e^{C_2}R$-bimodule structure on $R$, see \cite{horev2019genuineequivariantfactorizationhomology, Hill22disks}. On the other hand, it does not provide a unital equivariant multiplication on $R$: there is no underlying $\E_1$-algebra structure in $C_2$-spectra coming from an $\E_{\sigma}$-algebra structure.
    
    So \cref{main:MUR_quotients} enhances the associative multiplications on quotients of $\MU$ that were established by Angeltveit, Basu--Sagave--Schlichtkrull and Hahn--Wilson  \cite{Angeltveit_2008, Basu_2017, hahnWilson2022redshift}. However, it does not enhance the multiplications to unital equivariant multiplications on quotients of $\MUR$. Indeed, this is provably impossible, since $\E_{\sigma}$ is generically the best possible structure on such quotients. An argument by Shi, recorded by Bachmann--Hahn, shows that $\BPR/\ol{v}_i$ does not admit the structure of a homotopy associative $C_2$-ring spectrum (\cref{remark: MUR quotient theorem}). This rules out any $\E_1$-version of \cref{main:MUR_quotients}. Moreover, there are many quotients whose underlying spectra do not admit $\E_2$-algebra structures, so also $\E_{2\sigma}$ cannot be achieved generically, making $\E_{\sigma}$ the best possible $C_2$-little disk operad one can hope for. Instead, our result equips those quotients with bimodule structures over the norm of their underlying. That is precisely the structure needed to construct $\THR(R) \coloneqq R \otimes_{N_e^{C_2} \Res_e^{C_2} R} R$, whence our interest in $\E_{\sigma}$-algebra structures.
\end{mainrem}

While Hahn--Shi \cite{hahnRealOrientationsLubin2020} have constructed an $\E_{\sigma}$-algebra structure on quotients of the periodic Real bordism spectrum $\MUPR$, their approach does not seem feasible for the non-periodic version. Suppose we want to quotient out certain homogeneous elements in $\pi_{*\rho} \MUPR$. Then, Hahn--Shi crucially use the periodicity generator to move every such element to the same degree. This degree can then be accessed through the geometry of $\mathbb{HP}^{\infty}$, see \cite[Proposition 4.2]{hahnRealOrientationsLubin2020}. Instead, we are inspired by Basu--Sagave--Schlichtkrull's work on writing quotients of $\MU$ as $\E_1$-Thom spectra over $\SU$, see \cite{Basu_2017}. In fact, we prove a $C_2$-equivariant version of their result, although our methods are completely different and involve certain lifting techniques from \cite{quinnZhu2026multiplicativeequivariantthomspectra}.

One could try to naively equivariantize Basu--Sagave--Schlichtkrull's argument, and the first hurdle is already realizing that the appropriate $C_2$-refinement for $\SU$ is not $\SU_{\R}$, i.e.~$\SU$ with the complex conjugation action, but rather $\B \Omega^{\sigma} \SU_{\R}$. It turns out that this non-trivial twist complicates attempts of a naive generalization, see \cref{remark: BSS hard}. Nonetheless, using our methods we manage to prove that quotients of $\MUR$ can be obtained as $C_2$-Thom spectra over $\B \Omega^{\sigma} \SU_{\R}$ through an $\E_{\sigma}$-algebra map (\cref{corollary: Real BSS}).

As the main examples for our \cref{main:MUR_quotients}, we immediately obtain structured versions of the Real truncated Brown--Peterson spectra, recalled in \cref{example: BPRJ} and \cref{example: BPGJ}, answering a question of Angelini-Knoll--Kong--Quigley \cite[Remark 5.7]{gabe2025realsyntomiccohomology}:

\begin{maincor}\label{main:BPRnkRn}
    The truncated Real Brown--Peterson spectra \(\BPRn\) admit \(\E_\sigma\)-\(\MUR\)-algebra structures and higher truncated Brown--Peterson spectra \(\BPG \langle m \rangle\) admit \(\Coind^{G}_{C_2}\E_\sigma\)-\(\MUG\)-algebra structures for all $n \geq 1$.
\end{maincor}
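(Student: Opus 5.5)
This is a direct specialization of \cref{main:MUR_quotients}. The only work is to present $\BPRn$ and $\BPG\langle m\rangle$ as quotients of $\MUR$ and $\MUG$ by families of elements in the degrees that theorem allows.

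\textbf{The case of $\BPRn$.} I would follow \cref{example: BPRJ}. After $2$-localization, which is harmless because localization is a smashing operation compatible with $\E_\sigma$-$\MUR$-algebra structures, choose Real lifts of polynomial generators of $\pi_{2*}\MU$.

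By Araki and Hu--Kriz, $\pi^{C_2}_{*\rho}\MUR \cong \pi_{2*}\MU$ via the underlying map, which halves degrees. So there are classes $\overline{x}_i\in\pi^{C_2}_{i\rho}\MUR_{(2)}$ restricting to polynomial generators $x_i$ of $\pi_{2i}\MU$. These can be chosen so that $\overline{x}_{2^k-1}$ lifts a Hazewinkel/Araki generator $v_k$, that is, $\overline{v}_k$.

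Take the indexing set
\[
I=\{i\geq 1: i\neq 2^k-1\}\ \cup\ \{2^k-1 : k>n\}.
\]
Then $\BPRn\simeq \MUR_{(2)}/(\overline{x}_i:i\in I)$, and \cref{main:MUR_quotients}(i) gives the $\E_\sigma$-$\MUR$-algebra structure. Here I am treating $\MUR_{(2)}$ in place of $\MUR$: the theorem applies verbatim to the localized version, or alternatively one composes with the $\E_\infty$-map $\MUR\to\MUR_{(2)}$.

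\textbf{The case of $\BPG\langle m\rangle$.} By \cref{example: BPGJ} this is the quotient of $\MUG$ by the $G$-orbits of certain classes. Following Hill--Hopkins--Ravenel and Beaudry--Hill--Shi--Zeng, $\pi^{C_2}_{*\rho}\MUG$ contains generators $\overline{t}_i^{G}$ in degree $i\rho$. Their $G$-conjugates, i.e.\ the orbits $G\cdot\overline{x}_i$, generate the underlying polynomial ring freely. The truncation $\BPG\langle m\rangle$ is defined as $\MUG/(G\cdot\overline{x}_i : i\in I)$, where the indexing set removes all non-$\overline{t}_{2^k-1}$ generators together with $\overline{t}_{2^k-1}$ for $k>m$. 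This is literally of the form in \cref{main:MUR_quotients}(ii), which yields the $\Coind^G_{C_2}\E_\sigma$-$\MUG$-algebra structure.

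\textbf{Main obstacle.} The only real point is bookkeeping. One must check that the standard definitions in \cref{example: BPRJ} and \cref{example: BPGJ} produce exactly these iterated cofibers by elements in degrees $i\rho$ with $i\geq 1$, rather than elements in other $\RO(C_2)$-degrees or quotients computed in another category.

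This is verified on homotopy. Quotienting the sequences of classes gives the expected $\pi^{C_2}_{*\rho}$, and the underlying homotopy matches the polynomial algebra $\pi_*\BP\langle n\rangle$ (respectively its $G$-analogue). By the Hill--Meier/Hill--Hopkins--Ravenel characterization, this identifies the quotient with $\BPRn$ (respectively $\BPG\langle m\rangle$) up to equivalence.
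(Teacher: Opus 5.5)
Your $\BPRn$ argument is the paper's: realize it via \cref{example: BPRJ} as $\MU_{\R(2)}/(\ol{x}_i : i\in I)$, apply the $(2)$-local version of \cref{main:MUR_quotients}(i), and restrict along $\MUR\to\MU_{\R(2)}$. The $\BPGm$ half, however, has a genuine gap. In \cref{example: BPGJ}, $\BPGm$ is not defined as a twisted monoid quotient of $\MUG$. It is the base change
\[ \BPGm \simeq \BPG\otimes_{\MUGtwo}\MUGtwo/(G\cdot \ol{t}_j : j>m) \]
along the retraction $\MUGtwo\to\BPG$. Moreover, the $\ol{t}_j$ are generators of $\pi^{C_2}_{*\rho}\BPG$ in degree $(2^j-1)\rho$, not generators of $\pi^{C_2}_{*\rho}\MUG$ in degree $j\rho$. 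So the index set you describe (``all non-$\ol{t}_{2^k-1}$ generators'') is not well defined, and the step ``this is literally of the form in (ii)'' is false as stated.

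What (ii) gives you directly is a $\Coind^G_{C_2}\E_\sigma$-$\MUGtwo$-algebra structure on $\MUGtwo/(G\cdot\ol{t}_j : j>m)$, with index set $\{2^j-1 : j>m\}$. The real content is transporting this structure to $\BPGm$. Your fallback, a homotopy check plus a Hill--Meier/HHR ``characterization'', cannot do this. The Hill--Meier lemma concerns a given map between strongly even $C_2$-spectra. For $G=C_{2^n}$ with $n\geq 2$, knowing $\pi^{C_2}_{*\rho}$ and the underlying homotopy neither produces a comparison map nor detects equivalences of $G$-spectra, because the fixed points for $C_2<H\leq G$ are invisible.

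The paper closes this with \cref{lem:basechange}, taking $R=\MUGtwo$, $M=\BPG$ and $\O=\Coind^G_{C_2}\E_\sigma$. For this, $\MUGtwo\to\BPG$ must be an $\E_1\otimes\Coind^G_{C_2}\E_\sigma$-algebra map. That map comes from the structured Real Quillen retraction of the authors' earlier work, normed to a $\Coind^G_{C_2}\E_\rho$-map and restricted along $\E_1\otimes\Coind^G_{C_2}\E_\sigma\to\Coind^G_{C_2}(\E_1\otimes\E_\sigma)\to\Coind^G_{C_2}\E_\rho$, as in \cref{lemma: enhance structure of twisted monoid quotients}. This yields a $\Coind^G_{C_2}\E_\sigma$-$\BPG$-algebra, which is then restricted to $\MUG$.

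Your direct route can instead be made rigorous formally rather than by a homotopy count, in three steps:
\begin{enumerate}[(1)]
\item Choose the $\ol{x}_i$ with $i\neq 2^k-1$ in the kernel of the Real Quillen map, so that $\BPR\simeq\MU_{\R(2)}/(\ol{x}_i)$ as $\MU_{\R(2)}$-modules.
\item Use that $N^G_{C_2}$ is symmetric monoidal and preserves geometric realizations to get $\BPG\simeq\MUGtwo/(G\cdot\ol{x}_i)$.
\item Show that twisted monoid quotients combine under $\otimes_{\MUGtwo}$, giving $\BPGm\simeq\MUGtwo/(G\cdot\ol{x}_i,G\cdot\ol{t}_j : i\neq 2^k-1,\ j>m)$. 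This has exactly one class in each degree $i\rho$ for $i\in\Z_{\geq 1}\setminus\{2^k-1 : k\leq m\}$, so \cref{main:MUR_quotients}(ii) applies.
\end{enumerate}
This would avoid the $\E_1\otimes\Coind^G_{C_2}\E_\sigma$-structured retraction, at the cost of matching the $\MUGtwo$-module structure on $\BPG$ used in \cref{example: BPGJ}. That identification is precisely what your proposal asserts without proof.
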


The theorem also immediately applies to certain generalized higher truncated Brown--Peterson spectra, giving them a $\Coind_{C_2}^G \E_{\sigma}$-algebra structure, and thereby answering a question by Hahn--Wilson \cite[Problem 5.5]{AimPL_equivstable_5}. We will recall the construction of these objects in \cref{example: BPGJ}.

While an $\E_{\sigma}$-algebra structure on quotients of $\MUR$ is optimal generically (\cref{main: remark generic}), an enhancement to an $\E_{1+2\sigma}$-algebra structure is expected in this specific case of truncated Real Brown--Peterson spectra \cite[Remark 1.0.14]{hahnWilson2022redshift}. This would in particular equip $\BPRn$ and $\BPGm$ with an equivariant associative multiplication. In upcoming joint work with Carrick--Hill--Stewart, we set up an equivariant version of $\mathrm{TAQ}$ and compute equivariant centers to establish this result.

\subsubsection*{Orientations by twisted monoid quotients}
Combining the interest of quotients of Real bordism and orientation theory leads to the study of orientations by the twisted monoid quotients $\BPGm$. Building on work of Hahn--Shi \cite{hahnRealOrientationsLubin2020}, Beaudry--Hill--Shi--Zeng \cite{beaudryHillShiZeng2021modelsLubinTate} produced certain explicit $G$-equivariant homotopy ring maps
\[ \BPG = N_{C_2}^G \BPR \longrightarrow E(k, \Gamma_h) \]
to Lubin--Tate theory associated to a perfect field $k$ and a formal group law $\Gamma_h$ of height $h$, where the $G$-action on $E(k, \Gamma_h)$ comes from the Morava stabilizer group action. This allows us to transport known features about $\BPG$ over to $E(k,\Gamma_h)$ and is in particular a technique to understand the inexplicit group action on $E$-theory obtained through obstruction theory.

We produce a factorization of this map through the higher truncated Brown--Peterson spectra $\BPGm$, whose definition we recall in \cref{example: BPGJ}.

\begin{mainthm}[\cref{thm:BHSZ_refinement}] \label{mainthm: BHSZ}
    The Beaudry--Hill--Shi--Zeng orientation \(\BPG \to E(k,\Gamma_h)\) refines to a \(\Coind^G_{C_2}\E_\rho\)-ring map.
    Furthermore, there are \(G\)-equivariant factorizations
    \[
    \begin{tikzcd}
        \BPG \ar[r]\ar[d] & E(k,\Gamma_h)
        &
        \raisebox{-0.9em}{\text{and}}
        &
        D^{-1}\BPG \ar[r]\ar[d] & E(k,\Gamma_h) \\
        \BPGm \ar[ru, dashed]
        &&
        &
        D^{-1}\BPGm \ar[ru, dashed]
    \end{tikzcd}
    \]
    of $\BPG$-modules. Here, \(D\in \pi^{G}_{*\rho_G} \left(\MUG \right)\) is a certain explicit class considered by Beaudry--Hill--Shi--Zeng \cite[Section 6]{beaudryHillShiZeng2021modelsLubinTate}.
\end{mainthm}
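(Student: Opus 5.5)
The plan is to treat the two assertions separately: first the $\Coind^G_{C_2}\E_\rho$-refinement of the Beaudry--Hill--Shi--Zeng orientation, then the factorizations, which will follow from \cref{main:MUR_quotients} together with the vanishing of the quotiented generators in $E(k,\Gamma_h)$.

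\textbf{Refinement.} Hahn--Shi show that a Real orientation $\MUR \to \Res^G_{C_2} E(k,\Gamma_h)$ can be chosen to be $\E_\rho$ (using that $\E_\sigma$/$\E_\rho$-maps out of $\MUR$ are governed by Thom spectrum universal properties over $\BUR$). I would first pass from $\MUR$ to $\BPR$: realize $\BPR$ as an $\E_\rho$ (or at least $\E_\sigma$) quotient of $\MUR_{(2)}$ and check that the orientation kills the relevant generators. Then I would use the norm--coinduction adjunction $N_{C_2}^G \dashv \Res^G_{C_2}$ at the level of operadic algebras. Concretely, an $\E_\rho$-map $\BPR \to \Res^G_{C_2}E$ adjoins to a $\Coind^G_{C_2}\E_\rho$-map $N_{C_2}^G\BPR \to E$, which uses that $E$ is a $G$-commutative (in fact $G$-$\E_\infty$) ring, as in \cite[Construction 2.2.5]{quinnZhu2026multiplicativeequivariantthomspectra}. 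I then need to identify this adjoint with the BHSZ map on homotopy. This holds because BHSZ construct their map exactly as the norm of the Hahn--Shi orientation composed with the multiplication, possibly after modifying the Real orientation by a strict automorphism of the formal group, which can be realized $\E_\rho$-ly by varying the Thom class.

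\textbf{Factorization.} $\BPGm$ is $\BPG/(G\cdot \ol{t}_i : i > m)$ for BHSZ generators $\ol{t}_i\in\pi^{C_2}_{(2^i-1)\rho}$. By \cref{main:MUR_quotients}(ii) it is a $\Coind^G_{C_2}\E_\sigma$-$\MUG$-algebra, built as an iterated relative tensor product of cofibers of the $G$-orbits of the $\ol t_i$. BHSZ show that the images of the $\ol{t}_i$ for $i>m$ (with $h = 2^{n-1}m$) vanish in $\pi_{*}^{C_2} E$; they choose the formal group law precisely so that this holds. Hence, as a $\BPG$-module map, $\BPG \to E$ factors through each cofiber $\BPG/(G\cdot \ol t_i)$. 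Since $E$ is an algebra and the quotient is the colimit of finite tensor products $\bigotimes_{\BPG}$ of such cofibers, I would assemble the factorizations using the $\BPG$-algebra structure of $E$ (restricting along $\E_\rho\to\E_\sigma$). Here $\BPGm \otimes_{\BPG} E \to E\otimes_{\BPG}E\to E$ provides the extension. The localized version follows by applying $D^{-1}(-)$, since $D$ maps to a unit in $E$ (BHSZ, Section 6), so the map from $D^{-1}\BPGm$ is induced by localization of modules.

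\textbf{Main obstacle.} The hardest step is the compatibility check in the refinement: showing that the norm-adjoint of an $\E_\rho$-Real orientation agrees with the BHSZ homotopy ring map. This means controlling the ambiguity in the Hahn--Shi $\E_\rho$-orientation so that it hits exactly the chosen formal group law. I would try first to use that $\E_\rho$-orientations out of $\MUR$ form a torsor-like space detected by $\pi_0$ of the strict-unit data, as in Hahn--Shi's Lubin--Tate argument, so that any homotopy-level Real orientation lifts.
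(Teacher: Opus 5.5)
\textbf{Factorization.} You write that the images of the $\ol t_i$ vanish, ``hence'' $\BPG\to E$ factors through each $\BPG/(G\cdot\ol t_i)$. That inference is the real content of the theorem, and it is not automatic. The class $\ol t_i$ lives only in $\pi^{C_2}_{\star}$, so the twisted monoid quotient is not the cofiber of a single map $\Sigma^V\BPG\to\BPG$. It is $-\otimes_{N^G_{C_2}\S[\ol t_i]}\S$, with $N^G_{C_2}\S[\ol t_i]$ acting through the norm multiplication; equivalently, it is a cofiber of an equivariant cube of norms (\cref{section: twisted monoid quotients via parametrized colimits}). A $C_2$-equivariant nullhomotopy of $f(\ol t_i)$ does not by itself show that the $N^G_{C_2}\S[\ol t_i]$-module structure on $E$ is pulled back from the augmentation. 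What you need is that $N^G_{C_2}\S[\ol t_i]\to\BPG\to E$ agrees, as an $\E_1$-map, with $N^G_{C_2}(\S[\ol t_i]\to\Res^G_{C_2}E)$ followed by the norm counit of $E$. That is precisely the statement that $f$ is a $\Coind^G_{C_2}\E_1$-map. Given this, $E\otimes_{N\S[\ol t_i]}\S\simeq E\otimes_{N\Res E}N(\Res E/f(\ol t_i))$, and the splitting of $\Res E/f(\ol t_i)$ gives a retraction onto $E$; this is \cref{prop: factorization}. So the structure that matters is on the map $f$. Your first step was meant to supply it, but you never use it here. The $\Coind^G_{C_2}\E_\sigma$-structure on $\BPGm$ from \cref{thm:MUG_quotients} plays no role. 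Two further points: $\BPG$ is not known to be $\E^G_\infty$, so $\BPGm$ is defined by base change from $\MUG$, and you need a $\Coind^G_{C_2}\E_1$-retraction $\MUG\to\BPG$ to run the argument (\cref{corollary: factoring BPG to E}). Also, your termwise-then-multiply assembly over $E\otimes_{\BPG}E$ is unnecessary once all $\ol t_i$ are handled at once.

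\textbf{Refinement.} You propose to pass from $\MUR$ to $\BPR$ by realizing $\BPR$ as a quotient and checking that the orientation kills the generators. This cannot produce an $\E_\rho$-map $\BPR\to\Res^G_{C_2}E$. Vanishing of generators only yields a factorization of modules, which is all \cref{prop: factorization} gives. There is no universal property identifying $\E_\rho$-maps out of $\MUR/(\ldots)$ with $\E_\rho$-maps out of $\MUR$ that kill the generators. Moreover, the quotient structures of \cref{thm:MUR_quotients} are only $\E_\sigma$, so $\E_\rho$-maps out of them are not even meaningful. The paper instead uses the structured Real Quillen idempotent of \cite[Theorem 6.3.3]{quinnZhu2026multiplicativeequivariantthomspectra}. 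Similarly, the $\E_\rho$-upgrade of the \emph{specific} Beaudry--Hill--Shi--Zeng Real orientation cannot be quoted from Hahn--Shi. It comes from the lifting theorem \cite[Corollary 6.1.6]{quinnZhu2026multiplicativeequivariantthomspectra} (compare \cref{prop:lift}), which is exactly the step you flag as the main obstacle and leave open. With these inputs in place, your norm-adjunction step and the $D^{-1}$ step are fine.
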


In fact, the existence of this factorization (\cref{mainthm: BHSZ}) was long expected by experts in the field, although a rigorous derivation relies on further equivariant multiplicative structures that were not available. Our previous work on structured Real orientations \cite{quinnZhu2026multiplicativeequivariantthomspectra} gives us just enough structure to perform this, allowing us to apply a general factorization result (\cref{prop: factorization}).

To our knowledge, \cref{mainthm: BHSZ} gives the first construction of the maps
\[ \BPGm \longrightarrow E(k, \Gamma_h). \]
Nonetheless, they have already been used to great effect in the literature, particularly in relation to identifying periodicities of $E(k, \Gamma_h)$. Indeed, these orientations feature in the transchromatic isomorphism theorem from Meier--Shi--Zeng \cite{meier2024transchromaticphenomenaequivariantslice}, which then Duan--Hill--Li--Liu--Shi--Wang--Xu use to determine and recover a myriad of $\RO(G)$-graded periodicities of $E(k,\Gamma_h)$, see
\cite{duan2025periodicityfinitecomplexityhigher}. We give an overview of these applications in the concluding \cref{subsection: periodicity lubin tate}.

Moreover, these orientations are one of the main sources of interest for the higher truncated Brown--Peterson theories: they allow us to access the higher real $K$-theories, as e.g.~shown to great effect in \cite{carrick2025higherrealktheoriesfinite}. For instance, the existence of the maps in \cref{mainthm: BHSZ} along with results from \cite{beaudryHillShiZeng2021modelsLubinTate} allows us to give a new equivariant model for higher real $K$-theory. 

Let us denote by $\ul{\map}_G$ the mapping $G$-spectrum.

\begin{maincor}[\cref{cor:BPGmEh_equivalence}] \label{maincor: equivariant equivalence}
    Let $G = C_{2^n}$ and $h = 2^{n-1}m$. Suppose that $k^{\times}$ contains all $q = (2^m-1)$-roots of unity. There is an equivalence
    \[ \ul{\map}_G \left(\Sigma_+^{\infty} \mathrm{EG}, L_{\Infl_e^G K(h)} D^{-1}\BPGm \right) \xrightarrow{\ \simeq \ } E(k,\Gamma_h)^{hC(k,m)} \]
    of $\BPG$-modules, where $C(k,m)$ is a subgroup of the extended Morava stabilizer group recalled in \cref{recollection: bhsz}.
\end{maincor}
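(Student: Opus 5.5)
The plan is to deduce the equivalence from \cref{mainthm: BHSZ} together with the Beaudry--Hill--Shi--Zeng identification of the homotopy fixed points of $E(k,\Gamma_h)$. By \cref{mainthm: BHSZ} we have a $G$-equivariant map of $\BPG$-modules $f\colon D^{-1}\BPGm \to E(k,\Gamma_h)$, where $G$ acts on the target through $C(k,m)$.

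The first step is to show that $f$ becomes an equivalence after $\Infl_e^G K(h)$-localization, at least on underlying spectra. Since $E(k,\Gamma_h)$ is already $K(h)$-local, $f$ factors through $L_{\Infl_e^G K(h)} D^{-1}\BPGm$. The underlying spectrum of $\BPGm$ is a quotient of $\Res^G_e\MUG \simeq \MU^{\wedge 2^{n-1}}$, and its homotopy is a polynomial ring on the $2^{n-1}m$ generators $\gamma^j \overline{t}_i$ with $1\leq i\leq m$. One checks that these generators are sent to the standard Lubin--Tate generators, modulo units, via the explicit BHSZ formulas. The point of inverting $D$ is that it makes the image of $v_h$ a unit. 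The underlying map $D^{-1}\BPGm \to E$ then induces, after $K(h)$-localization, an isomorphism on completed homotopy: both sides are $W(k)[[u_1,\dots,u_{h-1}]][u^{\pm}]$ once the appropriate coefficients $W(k)$ are adjoined.

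I expect this step to be the main obstacle. It needs the input from \cite{beaudryHillShiZeng2021modelsLubinTate} that the composite $\pi_*\BPG \to \pi_*E$ is surjective onto a set of regular generators and kills exactly the ideal defining $\BPGm$. It also needs a careful treatment of the coefficient extension from $\Z_{(2)}$ to $W(k)$. I would handle this extension by base-changing along $\S_{W(\F_{2^m})}$, or by noting that $E(k,\Gamma_h)$ is a Galois-type extension. This is where the hypothesis that $k^\times$ contains the $q$-th roots of unity enters, since it ensures the $C(k,m)$-action is realized by the $G$-action together with the central $\F_{2^m}^\times$-part.

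The second step passes to homotopy fixed points. Applying $\ul{\map}_G(\Sigma^\infty_+\mathrm{EG},-)$ turns an underlying equivalence of $G$-spectra into an equivalence of cofree $G$-spectra. This gives
\[ \ul{\map}_G(\Sigma^\infty_+\mathrm{EG}, L_{\Infl_e^G K(h)} D^{-1}\BPGm) \simeq \ul{\map}_G(\Sigma^\infty_+\mathrm{EG}, E(k,\Gamma_h)), \]
whose $G$-fixed points are $E^{hG}$. The remaining work is to identify $E^{hG}$ with $E^{hC(k,m)}$. This follows by recalling, as in \cref{recollection: bhsz}, that $C(k,m)$ is generated by $G$ together with roots of unity and Galois elements. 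Those extra elements act trivially on the chosen model, or are already absorbed by the $W(k)$-coefficients, so the extra fixed points are computed by an étale descent which is an equivalence. Finally, $\BPG$-linearity is automatic, since all maps involved are $\BPG$-module maps and localization and cofree replacement are $\BPG$-linear functors.
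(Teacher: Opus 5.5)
\textbf{There is a genuine gap.} It starts in your first step and carries through the rest of the argument.

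\textbf{Step 1 targets the wrong spectrum.} The map that becomes an equivalence after $K(h)$-localization is not $D^{-1}\BPGm\to E(k,\Gamma_h)$. It is the composite of that map with Beaudry--Hill--Shi--Zeng's $G$-equivariant map $E(k,\Gamma_h)\to E(k,\Gamma_h)^{hC(k,m)}$ (\cref{recollection: bhsz}).

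Your claimed underlying equivalence $L_{K(h)}\Res^G_e D^{-1}\BPGm\simeq\Res^G_e E(k,\Gamma_h)$ is false as soon as $m\geq 2$, even after adjoining $W(k)$. Modulo $I_h$, the source has homotopy $\F_2[(t^G_m)^{\pm 1}]$ by \cite[Proposition 7.1]{beaudryHillShiZeng2021modelsLubinTate}, which is concentrated in degrees divisible by $2(2^m-1)$. By contrast, $\pi_*E(k,\Gamma_h)/I_h\cong k[u^{\pm1}]$ has a unit in degree $2$. Base change to $W(k)$ cannot create a degree-$2$ unit.

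The reason is that $k^\times[q]\cong\F_{2^m}^\times$ acts nontrivially on $E(k,\Gamma_h)$, and the localized quotient only sees the fixed points. For $n=1$ this is the familiar fact that $L_{K(h)}\BP\langle h\rangle\simeq\widehat{E}(h)$ is $E(k,\Gamma_h)^{hC(k,h)}$, not $E(k,\Gamma_h)$ (\cref{remark: Baker wuergler}).

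\textbf{Your account of $D$ is also too weak.} You give $D$ no role beyond making $v_h$ a unit, but $L_{K(h)}$ already does that (\cref{remark: Kh localization}). If your mechanism worked, it would equally prove the statement without inverting $D$, which fails in general by \cref{theorem: not higher real k theory}. The underlying equivalence $L_{K(h)}\Res^G_e D^{-1}\BPGm\simeq\Res^G_e E(k,\Gamma_h)^{hC(k,m)}$ is a genuine computation, \cite[Proposition 7.6]{beaudryHillShiZeng2021modelsLubinTate}. That is where the hypothesis on $k^\times$ enters, and the paper cites it rather than rederiving it.

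\textbf{Step 2 misreads the target.} The right-hand side is the Borel $G$-spectrum $E(k,\Gamma_h)^{hC(k,m)}$ with its residual $G$-action.
\begin{itemize}
\item $G$ does not act through $C(k,m)$.
\item $C(k,m)=\mathrm{Gal}(k/\F_2)\ltimes k^\times[q]$ consists only of Galois elements and roots of unity; it is not generated by $G$ together with them.
\item The $G$-fixed points of the target are $(E(k,\Gamma_h)^{hC(k,m)})^{hG}=\EO_h(C(k,m)\times G)$ (\cref{cor:fixed_points_EOn_equivalence}), not $E(k,\Gamma_h)^{hG}$.
\item Neither the roots of unity (for $m\geq 2$) nor the Galois group (for $k\neq\F_2$) act trivially.
\end{itemize}
So identifying $E^{hG}$ with $E^{hC(k,m)}$ is neither true nor needed.

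\textbf{The repair is the paper's argument.} The composite $D^{-1}\BPGm\to E(k,\Gamma_h)\to E(k,\Gamma_h)^{hC(k,m)}$ is a $G$-map of $\BPG$-modules into a Borel $G$-spectrum whose underlying spectrum is $K(h)$-local. Hence it factors through $\ul{\map}_G(\Sigma^\infty_+\mathrm{EG},L_{\Infl_e^G K(h)}D^{-1}\BPGm)$. A map of Borel $G$-spectra is an equivalence if and only if it is one on underlying spectra. On underlying spectra, $L_{\Infl_e^G K(h)}$ becomes $L_{K(h)}$ by \cite[Proposition 3.2(2)]{carrickSmashingLocalizationsEquivariant2022}, so everything reduces to the cited Proposition 7.6. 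Your cofree-replacement idea is the right frame, but it must be applied to the map into $E(k,\Gamma_h)^{hC(k,m)}$, not into $E(k,\Gamma_h)$.
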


In fact, it was expected by experts that this equivalence already holds true before inverting $D$, inspired by Baker--Würgler's non-equivariant equivalence $L_{K(h)} \BP\langle h \rangle \simeq \widehat{E}(h)$ to completed Johnson--Wilson theory \cite[Theorem 4.1]{bakerWuergler1989liftings}. As such, we show that $E(k,\Gamma_h)^{hC(k,m)}$ is an equivariant refinement of completed Johnson--Wilson theory (\cref{remark: Baker wuergler}).

While we have managed to verify that the map in \cref{maincor: equivariant equivalence} remains an equivalence for $n = 1$ as well as for $(m,n) = (1,2)$ before inverting $D$, we were surprised to find that it is not an equivalence in general. We give a full characterization of when
\[ \ul{\map}_G \left(\Sigma_+^{\infty} \mathrm{EG}, L_{\Infl_e^G K(h)} \BPGm \right) \longrightarrow E(k,\Gamma_h)^{hC(k,m)} \]
is an equivalence of $\BPG$-modules.

\begin{mainthm}[\cref{prop: positive examples lubin tate}, \cref{theorem: not higher real k theory}] \label{mainthm: characterization}
    Let $G = C_{2^n}$ and $h = 2^{n-1}m$. Suppose that $k^{\times}$ contains all $q = (2^m-1)$-roots of unity. The map
    \[ \ul{\map}_G \left(\Sigma_+^{\infty} \mathrm{EG}, L_{\Infl_e^G K(h)} \BPGm \right) \longrightarrow E(k,\Gamma_h)^{hC(k,m)} \]
    is an equivalence of $\BPG$-modules if and only if $n = 1$ or $(m,n) = (1,2)$.
\end{mainthm}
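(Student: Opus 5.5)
The plan is to compare both sides after applying underlying homotopy and fixed points, and to reduce the question to a statement about Hurewicz-type images in $E_*$. By \cref{maincor: equivariant equivalence} (\cref{cor:BPGmEh_equivalence}), the composite
\[ \ul{\map}_G\left(\Sigma^\infty_+ \mathrm{EG}, L_{\Infl_e^G K(h)}\BPGm\right) \to \ul{\map}_G\left(\Sigma^\infty_+\mathrm{EG}, L_{\Infl_e^G K(h)}D^{-1}\BPGm\right) \xrightarrow{\simeq} E(k,\Gamma_h)^{hC(k,m)} \]
agrees with the map in question. Hence the map is an equivalence if and only if the first map, which inverts $D$, is an equivalence after Borel completion and $K(h)$-localization. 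Since the source is Borel, equivalences can be detected on underlying spectra with their $G$-action. The criterion therefore becomes: $D$ (more precisely its underlying class $\Res^G_e D \in \pi_*\Res^G_e\BPGm$) is already invertible in $L_{K(h)}\Res^G_e\BPGm$.

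The underlying spectrum $\Res^G_e \BPGm$ is a quotient of $\BP^{\wedge 2^{n-1}}$ with homotopy a polynomial ring on the $G$-orbits of the generators $t_i^{(j)}$, $1 \le i \le m$. After $K(h)$-localization this should be completed at the ideal generated by the images of $v_0,\dots,v_{h-1}$. Following Baker--Würgler, the localization is then a completed Landweber-exact theory whose $\pi_0$ is a power series ring. The element $D$ is a product of $G$-conjugates of the top generators $\ol{t}_m$ and their norms, as in \cite[Section 6]{beaudryHillShiZeng2021modelsLubinTate}.

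The key step is to decide when $D$ is a unit in this completed ring. Equivalently, we ask whether the formal group over $\pi_0 L_{K(h)}\Res^G_e\BPGm$, modulo the maximal ideal, has height exactly $h$ with $v_h$ equal to a unit multiple of $D$. Otherwise $D$ is a non-unit, for instance lying in the maximal ideal of a ring of larger Krull dimension than $h$.

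\textbf{Positive cases.} For $n=1$ the ring is $\Z_2[v_1,\dots,v_m]$ completed at $(2,v_1,\dots,v_{m-1})$. Here $D = v_m$ up to a unit and becomes invertible, which is exactly Baker--Würgler. For $(m,n)=(1,2)$, the ring $\Z_2[t_1, \gamma t_1]$ of \cite{beaudryHillShiZeng2021modelsLubinTate} has $v_2$ equal to $N(t_1)$ up to units modulo $(2,v_1)$, and $D$ is $N(t_1)$. A direct formal group law computation of the $2$-series in low degree should verify this (\cref{prop: positive examples lubin tate}).

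\textbf{Negative cases.} Count dimensions. The underlying polynomial ring has $m\cdot 2^{n-1} = h$ generators, but $\BPGm$ is not Landweber-exact of height exactly $h$ in a way that forces $D$ to be a unit. The strategy is to exhibit a point of $\operatorname{Spec}\pi_0$, that is, a ring map to a field of characteristic $2$, where the formal group has height exactly $h$ while $D$ maps to zero. Such a point survives $K(h)$-localization, so $D$ is not a unit there and the map is not an equivalence (\cref{theorem: not higher real k theory}). Concretely, I would try specializing all but one conjugate of $\ol t_m$ to zero, and compute the $2$-series using the Hill--Hopkins--Ravenel formulas for the formal group law on $\pi^u_*\MUG$, to show that height $h$ persists.

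I expect this explicit $2$-series computation to be the main obstacle: it requires control of the formal group law modulo the higher ideal in the norm. The exceptional case $(m,n)=(1,2)$ must be exactly the one where this specialization forces the height to exceed $h$.
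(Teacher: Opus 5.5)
Your reduction is sound, and in substance it is what the paper does. The map factors through the equivalence of \cref{cor:BPGmEh_equivalence} and both sides are Borel, so the question is whether $D$ acts invertibly on $\pi_*L_{K(h)}\Res^G_e\BPGm$. For the negative direction it suffices to find a ring map from $v_h^{-1}\pi_*\Res^G_e\BPGm/I_h$ to a field that kills $D$; the paper's $\F_4$-point count produces exactly such a point. Your positive cases are fine in outline.

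The gap is in how you propose to find that point. The missing ingredient is \cref{theorem: vk is Vk}. Modulo $2$, let $P=P_{2^{n-1}-1}\circ\cdots\circ P_0$ be the composite of the additive polynomials $P_r(T)=T+\sum_{i\le m}\gamma_n^r t_i T^{2^i}$, with coefficients $V_k$. Then $v_k\equiv V_k \pmod{I_k}$. This is proved by induction along the subgroup chain from the Beaudry--Hill--Shi--Zeng recursion in \cref{recollection: BHSZ computations}, not from a $2$-series computation with the formal group law on $\pi^u_*\MUG$, and it turns the problem into pure algebra over $\F_2$.

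It also shows that your concrete specialization cannot work. Modulo $I_h$, $v_h$ is congruent to the leading coefficient of $P$, which is $\prod_{r}(\gamma_n^r t_m)^{2^{m(2^{n-1}-1-r)}}$. So at every point of height exactly $h$, all conjugates of $t_m$ are nonzero. Setting any of them to zero pushes the height above $h$ in every case, not only for $(m,n)=(1,2)$.

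The same observation shows your description of $D$ is not accurate enough to run the argument. Factors built from conjugates of $\ol t_m$ and their norms are automatically units modulo $I_h$ once $v_h$ is inverted, hence units after $K(h)$-localization. If that were all of $D$, your own criterion would make the map an equivalence for all $(m,n)$. The factors that can vanish are norms of other classes that become invertible in $E(k,\Gamma_h)$ (see the footnote in \cref{recollection: bhsz}). They vanish on extra components of $\operatorname{Spec}$ on which all conjugates of $t_m$ are still nonzero.

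For instance, for $G=C_4$ and $m=2$ the ring modulo $I_4$ is $\F_4[(t_1^{C_4})^{\pm1}]\times\F_2[(t_2^{C_4})^{\pm1}]$ (\cref{lemma: BP C4 2 mod I computation}). On the first factor $t_1=\gamma_2t_1\neq0$, $t_2=\omega t_1^3$ and $\gamma_2t_2=\omega^2t_1^3$. In particular, $\pi_0$ of the localization is not a local power series ring, and this splitting is exactly the phenomenon at work.

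The paper avoids identifying $D$ at all. By \cite[Proposition 7.1]{beaudryHillShiZeng2021modelsLubinTate} the $D$-inverted ring modulo $I_h$ is $\F_2[(t_m^G)^{\pm1}]$, which has exactly three $\F_4$-points. So it suffices to find more than three tuples $\alpha_{r,i}\in\F_4$ with $P^\alpha(T)=T+cT^{2^h}$ and $c\neq0$. The computation in \cref{prop: F4 points computation} does this as follows:
\begin{enumerate}[(i)]
\item For $m=1$, $n\geq3$, take $\alpha_{2q,1}=\alpha_{2q+1,1}=c_q\in\F_4^\times$ and use $(T+cT^2)\circ(T+cT^2)=T+T^4$. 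This gives $3^{2^{n-2}}$ points.
\item For $m\geq2$, one extra point comes from $R(T)=T+\omega T^2$ and $S_m(T)=\sum_{k<m}T^{2^k}$.
\end{enumerate}
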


We prove this through a $\pi_*(L_{K(h)}-)/(2,v_1,\cdots, v_{h-1})$ computation. Here, Beaudry--Hill--Shi--Zeng computed 
\[ \pi_*^e \left(E(k,\Gamma_h)^{hC(k,m)} \right)/(2,v_1, \cdots, v_{h-1}) \cong \F_2[t^{\pm 1}] \] 
in \cite[Proposition 7.1]{beaudryHillShiZeng2021modelsLubinTate}, but the complexity for the analogous computation on the left-hand side of \cref{mainthm: characterization} increases rapidly, so we ultimately distinguish
\[ \pi_* \left(L_{K(h)} \Res_e^G \BPGm \right)/(2,v_1,\cdots, v_{h-1}) \qquad \text{from} \qquad \pi_*^e \left(E(k,\Gamma_h)^{hC(k,m)} \right)/(2,v_1,\cdots, v_{h-1}) \]
by counting the $\F_4$-points on their associated affine schemes (\cref{prop: F4 points computation}).

\subsection*{Outline}
The remainder of the article is organized as follows:

In \cref{section: structured quotients of Real bordism} we produce an $\E_{\sigma}$-$\MUR$-algebra structure on quotients of $\MUR$ through designer spectra and lifting methods. We furthermore discuss higher group versions and produce a $\Coind_{C_2}^G \E_{\sigma}$-$\MUG$-algebra structure on twisted monoid quotients of $\MUG$.

In \cref{section: orientations by twisted monoid quotients} we factor the Beaudry--Hill--Shi--Zeng orientations $\BPG \to E(k, \Gamma_h)$ through the higher truncated Brown--Peterson theory $\BPGm$, which we use to study the $K(h)$-localization of $\BPGm$ and its relation to higher real $K$-theory. We end by discussing known applications of the factorization, namely periodicities of Lubin--Tate theory.

In \cref{section: twisted monoid quotients via parametrized colimits} we record a definition of twisted monoid quotients as a total cofiber of an equivariant cube. This is made precise using parametrized colimits.

\subsection*{Notations \& Conventions}
Our terminology will be compatible with \cite{quinnZhu2026multiplicativeequivariantthomspectra}, and in particular we will freely be using the language of parametrized higher algebra. Nonetheless, we will recall relevant notions in the body of the article, so this paper can be read independently from \cite{quinnZhu2026multiplicativeequivariantthomspectra}. We collect some notational conventions here that we will use throughout the whole article.
\begin{enumerate}[(1)]
    \item Throughout, $G$ will always be a finite group.
    \item We denote by $\rho$  and \(\sigma\) the regular and sign representations of $C_2$ respectively.
    \item Underlined categories are $G$-$\infty$-categories, e.g. $\ul{\Sp}_G$ is the $G$-$\infty$-category which on level $H \leq G$ is $\Sp_H$.
    \item Let $X,Y \in \Sp_G$. We  denote by $\ul{\map}_G(X, Y)$ the mapping $G$-spectrum refining the mapping $G$-space $\ul{\Map}_G(X, Y)$. We also write $\ul{\map}_{\ul{\Sp}_G}(X, Y)$ and $\ul{\Map}_{\ul{\Sp}_G}(X, Y)$.
    \item The $G$-$\infty$-operad $\E_{\infty}^G$ is the terminal $G$-$\infty$-operad. So $\E_{\infty}^G$-algebras admit all norms -- they are also known as normed algebras, ultracommutative algebras, or \(G\)-\(\E_\infty\)-algebras.
    \item There is a restriction functor of equivariant $\infty$-operads $\Res_H^G \colon \Op_{G, \infty} \to \Op_{H, \infty}$ which admits a right adjoint $\Coind_H^G \colon \Op_{H, \infty} \to \Op_{G, \infty}$, the coinduction functor. See \cite[Section 1.3.1]{stewart2025tensorproductsequivariantcommutative} and \cite[Remark 2.2.5]{quinnZhu2026multiplicativeequivariantthomspectra} for a discussion in the language of parametrized higher categories.
\end{enumerate}

\subsection*{Acknowledgements}
We thank Christian Carrick, Markus Hausmann, Kaif Hilman, Ishan Levy, Guchuan Li, Lennart Meier and XiaoLin Danny Shi for helpful and encouraging discussions.
RQ is funded by the NUI Travelling Studentship. QZ is supported by the Max Planck Institute for Mathematics (MPIM) in Bonn and is thankful
for its financial support and for providing  conducive working environments. QZ furthermore thanks Utrecht University for its hospitality, where a portion of this article was written.

\section{Structured quotients of Real bordism} \label{section: structured quotients of Real bordism}

\subsection{Equivariant designer polynomial algebras}\label{sec:designer}

The main construction we need is an \(\MUR\)-version of Hahn--Wilson's designer polynomial \(\MU\)-algebras \cite[Construction 2.6.1]{hahnWilson2022redshift}.

\begin{proposition}\label{construction:designer_MUR_poly}
    The free \(\E_1\)-\(\MUR\)-algebra on a class in degree \(i\rho\), denoted by \(\tb{\MUR[\overline{x}_i]}\), admits an \(\E^{C_2}_\infty\)-\(\MUR\)-algebra structure.
\end{proposition}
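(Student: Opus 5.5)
Following Hahn--Wilson, I would build \(\MUR[\overline{x}_i]\) as a \(C_2\)-Thom spectrum over a \(C_2\)-space that is \(\E^{C_2}_\infty\) and whose underlying \(\E_1\)-structure is free on a point. The natural candidate is \(\Omega^\rho \Sigma^\rho \mathbb{CP}^\infty_{\R}\), or better the simpler model \(\mathbb{CP}^\infty_\R\) itself viewed as \(\BU_\R(1)\). The non-equivariant argument takes the map \(\Sigma^{2i}\mathbb{CP}^\infty\)-type data. More precisely, it uses that \(\MU\wedge \Sigma^\infty_+\BU(1)\) is \(\MU[b_1,b_2,\dots]\). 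Equivariantly, \(\MUR\wedge\Sigma^\infty_+\CPR^\infty \simeq \bigoplus_j \Sigma^{j\rho}\MUR\) by the Real orientation; this is Araki's computation. The Thom spectrum of a suitable virtual bundle over \(\BU_\R\) is an \(\E^{C_2}_\infty\)-\(\MUR\)-algebra whose homotopy is polynomial over \(\pi_{*\rho}\MUR\) on classes \(\overline{b}_j\in\pi_{j\rho}\).

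The key steps are as follows. (1) Take the \(\E^{C_2}_\infty\)-ring \(\MUR\wedge\Sigma^\infty_+\BU_\R\), or equivalently \(\MUR\wedge\MUR\). By the Real Thom isomorphism and Araki/Hu--Kriz, it is a free \(\MUR\)-module, with \(\pi_{*\rho}\) equal to \(\pi_{*\rho}\MUR[\overline{b}_1,\overline{b}_2,\dots]\) with \(|\overline{b}_j|=j\rho\). (2) Choose an \(\E^{C_2}_\infty\)-map \(\BU_\R\to\BU_\R\) of Real infinite loop spaces, such as a product of Adams-type operations or the map classifying \(\BU_\R(1)\)-data. The aim is to isolate a single generator in degree \(i\rho\). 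Hahn--Wilson's trick achieves this by taking a pushout (relative tensor product) of \(\E_\infty\)-\(\MU\)-algebras that kills the \(b_j\) with \(j\neq i\). Equivariantly, I would form the relative tensor product \(\MUR\wedge\MUR\otimes_{\MUR[\overline b_j:j\neq i]}\MUR\), once those \(\overline b_j\) are generated by an \(\E^{C_2}_\infty\) sub-object. Alternatively, and more robustly, I would use a regular-sequence quotient in which each step is a pushout along an \(\E^{C_2}_\infty\)-map from a free \(\E^{C_2}_\infty\)-algebra, and check the resulting homotopy by the \(\RO(C_2)\)-graded Künneth spectral sequence, which collapses because everything is free over \(\pi_{\star}\MUR\) on \(\rho\)-graded classes. (3) Produce an \(\E_1\)-\(\MUR\)-algebra map from the free \(\E_1\)-algebra \(\mathrm{Free}_{\E_1}(S^{i\rho}\otimes\MUR)\) to the result, sending the generator to \(\overline x_i\). (4) Check that this map is an equivalence. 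Both sides are free \(\MUR\)-modules on \(\{S^{ki\rho}\}_k\), so it suffices to check on \(\pi_{*\rho}\) of the underlying and the geometric fixed points. Underlying, this is Hahn--Wilson's statement. On geometric fixed points, \(\Phi^{C_2}\MUR=\mathrm{MO}\), and the map reduces to the analogous mod 2 statement.

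The main obstacle is step (2): killing the unwanted generators by \(\E^{C_2}_\infty\)-cell attachments. Free \(\E^{C_2}_\infty\)-algebras involve norms, so \(N_e^{C_2}\) of cells appear, and they could add extra homotopy in degrees not of the form \(*\rho\). I would first try to avoid this by realizing the whole polynomial algebra \(\MUR[\overline b_j]\) and its quotient as Thom spectra over \(C_2\)-infinite loop spaces, namely the fiber of a Real infinite loop map \(\BU_\R\to\prod_{j\neq i}K(\underline{\Z},j\rho)\)-type. Here the Thom spectrum functor is symmetric monoidal, so it automatically preserves \(\E^{C_2}_\infty\)-structures. The homotopy is then computed by the Real Thom isomorphism and the slice/\(\rho\)-graded evenness of everything involved, and the underlying computation matches Hahn--Wilson's.
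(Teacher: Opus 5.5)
The gap is step (2), which you identify as the main obstacle but never close. Neither route you offer for it works as stated.

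\emph{The relative tensor product.} Forming $\MUR\wedge\MUR\otimes_{\MUR[\overline{b}_j:j\neq i]}\MUR$ as a pushout of $\E^{C_2}_\infty$-algebras presupposes two things: $\E^{C_2}_\infty$-structures on polynomial $\MUR$-algebras, which are exactly the objects in question, and $\E^{C_2}_\infty$-maps out of them sending the generators to the $\overline{b}_j$. Such algebras are not free as $\E^{C_2}_\infty$-algebras, so these maps are not determined by the images of generators, and you give no way to construct them. Attaching $\E^{C_2}_\infty$-cells instead introduces norm and Dyer--Lashof classes, as you note yourself. This already happens non-equivariantly, and it is not how Hahn--Wilson proceed.

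\emph{The Thom-spectrum fix.} This is left unspecified, and there is no reason it yields the right ring. For a connected $\E^{C_2}_\infty$-space $X$ over $\BUR$ one has $\MUR\otimes\Th(X)\simeq\MUR\otimes\Sigma^\infty_+X$ with the Pontryagin product, which is typically not polynomial on one class. Your own candidate shows this. $\MUR\wedge\Sigma^\infty_+\CPR^\infty$ is additively $\bigoplus_j\Sigma^{j\rho}\MUR$, but on underlying homotopy $\beta_1^2=2\beta_2+a_{11}\beta_1$, so it is not the free $\E_1$-algebra. Likewise $\MU\wedge\Sigma^\infty_+\BU(1)$ is not $\MU[b_1,b_2,\ldots]$ as a ring; that description holds for $\MU\wedge\Sigma^\infty_+\BU$. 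Also, $\Omega^\rho\Sigma^\rho\CPR^\infty$ is only an $\E_\rho$-space.

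The missing idea is to get the $\E^{C_2}_\infty$-structure from a \emph{grading} rather than from a positive-dimensional space. This is what the paper does.
\begin{enumerate}[(i)]
\item The Real $J$-homomorphism $\ul{\Z}\times\BUR\to\ul{\Pic}_{C_2}(\ul{\Sp}^{C_2})$ is $\E^{C_2}_\infty$.
\item Parametrized operadic left Kan extension along the projection $\ul{\Z}\times\BUR\to\ul{\Z}$ gives a lax $C_2$-symmetric monoidal functor $\ul{\Z}\to\ul{\Sp}^{C_2}$, $n\mapsto\Sigma^{n\rho}\MUR$. This is a $\ul{\Z}$-graded $\MUPR$.
\item Precompose with the $\E^{C_2}_\infty$-map $\cdot i\colon\ul{\Z}\to\ul{\Z}$, restrict to $\ul{\Z}_{\geq 0}$, left Kan extend back, and forget the grading.
\end{enumerate}
The result is an $\E^{C_2}_\infty$-$\MUR$-algebra whose underlying $C_2$-spectrum is $\bigoplus_{k\geq 0}\Sigma^{ki\rho}\MUR$ by the left Kan extension formula. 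Multiplications and norms only move between graded pieces, so nothing extraneous appears.

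This is the correct version of your opening sentence. The $\E^{C_2}_\infty$-monoid that is free as an $\E_1$-monoid on a point is the discrete $\ul{\Z}_{\geq 0}$, and the construction is in effect an $\MUR$-linear Thom spectrum over it.

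Your steps (3)--(4) then go through as written. The paper finishes more quickly: the comparison map from the free $\E_1$-$\MUR$-algebra is a map of strongly even $C_2$-spectra and an underlying equivalence (Hahn--Wilson's case), hence an equivalence by \cite[Lemma 3.4]{hillmeier2017}.
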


\begin{proof}
Recall that the Real \(J\)-homomorphism \(\ul{\Z}\times\BUR \to \ul{\Pic}_{C_2}(\ul{\Sp}^{C_2})\) is an \(\E^{C_2}_\infty\)-map of $C_2$-spaces \cite[Appendix A.2]{quinnZhu2026multiplicativeequivariantthomspectra}. Postcomposing this by $\ul{\Pic}_{C_2}(\ul{\Sp}^{C_2}) \to \ul{\Sp}^{C_2}$ yields a map $\ul{\Z} \times \BUR \to \ul{\Sp}^{C_2}$. Parametrized operadically left Kan extending \cite{nardinshah2022equivarianttopos} along the $\E_{\infty}^{C_2}$-map \( \ul{\Z}\times\BUR \to \ul{\Z}\) gives a lax \(C_2\)-symmetric monoidal functor \(\ul{\Z}\to \ul{\Sp}^{C_2}\).
    Precomposing with \(-\cdot i \colon \ul{\Z} \to \ul{\Z}\),
    restricting and left Kan extending along \(\ul{\Z}_{\geq 0}\hookrightarrow \ul{\Z}\) yields a lax $C_2$-symmetric monoidal functor $\ul{\Z} \to \ul{\Sp}^{C_2}$, which we will suggestively call $\MUR[\ol{x}_i]$.
    \begin{center}
        \begin{tikzcd}
            & & \ul{\Z} \times \BUR \arrow[r] \arrow[d] & \ul{\Pic}_{C_2}(\ul{\Sp}^{C_2}) \arrow[r] & \ul{\Sp}^{C_2}
            \\ \ul{\Z}_{\geq 0} \arrow[d] \arrow[r] & \ul{\Z} \arrow[r, "i"] & \ul{\Z} \arrow[urr, dashed]
            \\ \ul{\Z} \arrow[uurrrr, bend right, dotted, "{\MU_{\R}[\ol{x}_i]}", swap]
        \end{tikzcd}
    \end{center}
    Forgetting the grading yields an \(\E^{C_2}_\infty\)-\(\MUR\)-algebra, where the $\MUR$-algebra structure is induced by the $\E_\infty^{C_2}$-map $\BUR \to \ul{\Z} \times \BUR$ that includes into the $0$-component. Let us now argue that this construction is indeed a refinement of the free $\E_1$-$\MUR$-algebra on a generator of degree $i\rho$, thus justifying the notation. 
    
    The underlying $C_2$-spectrum is $\bigoplus_{k \geq 0} \Sigma^{ki\rho} \MUR$ by the left Kan extension formula, which is in particular strongly even. Moreover, the choice of a generator $\ol{x}_i$ in $\pi_{i\rho}(\MUR[\ol{x}_i])$ determines an $\E_1$-map from the free $\E_1$-$\MUR$-algebra on a generator of degree $i\rho$. This is a map between strongly even $C_2$-spectra, which is an equivalence on underlying, checked on homotopy groups.\footnote{On underlying it precisely reduces to Hahn--Wilson's construction \cite[Construction 2.6.1]{hahnWilson2022redshift}.} On the other hand, any map between strongly even $C_2$-spectra, which is an equivalence on underlying, is itself an equivalence \cite[Lemma 3.4]{hillmeier2017}.
\end{proof}

\begin{corollary}\label{lemma:MURI}
    Let \(I\subseteq \Z_{\geq 1}\) and let \(\{\overline{x}_i\}_{i\in I}\) denote a collection of classes in degrees \(i\rho\).
    Then, $\MUR[\overline{x}_i : i\in I]$
    admits an \(\E^{C_2}_\infty\)-\(\MUR\)-algebra structure.
\end{corollary}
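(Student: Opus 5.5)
The plan is to define $\MUR[\overline{x}_i : i\in I]$ as the relative tensor product over $\MUR$ of the designer algebras from \cref{construction:designer_MUR_poly}, and to use that the category of $\E^{C_2}_\infty$-$\MUR$-algebras has coproducts computed by this tensor product. For a finite set $I$ we set
\[
\MUR[\overline{x}_i : i\in I] \coloneqq \bigotimes_{i\in I,\,\MUR} \MUR[\overline{x}_i].
\]
This is an $\E^{C_2}_\infty$-$\MUR$-algebra because the relative smash product over an $\E^{C_2}_\infty$-ring is $C_2$-symmetric monoidal, and finite coproducts in $\E^{C_2}_\infty$-$\MUR$-algebras are computed by it. For an infinite $I\subseteq \Z_{\geq 1}$ we take the filtered colimit over the finite subsets $F\subseteq I$. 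Filtered colimits of $\E^{C_2}_\infty$-algebras are computed underlying, so the colimit is again an $\E^{C_2}_\infty$-$\MUR$-algebra. Equivalently, this colimit is the coproduct in $\E^{C_2}_\infty$-$\MUR$-algebras over $I$.

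We then check that this agrees with what the notation suggests: the underlying $\E_1$-structure should be the tensor product of free $\E_1$-$\MUR$-algebras. We use the formula $\MUR[\overline{x}_i]\simeq \bigoplus_{k\geq 0}\Sigma^{ki\rho}\MUR$ from the proof of \cref{construction:designer_MUR_poly}. Distributing the tensor product gives
\[
\MUR[\overline{x}_i : i\in I]\simeq \bigoplus_{\text{monomials}} \Sigma^{(\sum k_i i)\rho}\MUR,
\]
a direct sum of $\rho$-multiple suspensions of $\MUR$. It is therefore strongly even, and its underlying spectrum is $\MU[x_i : i\in I]$.

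If one wants an explicit identification with a prescribed model, choose the classes $\overline{x}_i$ in the factors. The resulting $\E_1$-$\MUR$-algebra map from the tensor product of free $\E_1$-algebras (or from whichever model is used) is then a map of strongly even $C_2$-spectra that is an equivalence on underlying homotopy. By \cite[Lemma 3.4]{hillmeier2017}, as in the proof of \cref{construction:designer_MUR_poly}, it is an equivalence.

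The only step needing care is the coproduct formalism for $\E^{C_2}_\infty$-algebras in the parametrized setting: that the indexed tensor product over a finite set $I$ (with trivial action) is the coproduct, and that filtered colimits commute with the forgetful functor. Both are standard facts from parametrized higher algebra, and we would cite them.
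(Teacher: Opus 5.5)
Your proposal is correct and matches the paper's proof: reduce to finite $I$ by a filtered colimit, then note that the relative tensor product over $\MUR$ of the designer algebras $\MUR[\overline{x}_i]$ from \cref{construction:designer_MUR_poly} inherits an $\E^{C_2}_\infty$-$\MUR$-algebra structure. Your extra checks (strong evenness, and identification with the tensor product of free $\E_1$-algebras) are harmless. The identification is in fact immediate, since forgetting to $\E_1$-$\MUR$-algebras is symmetric monoidal and each factor already refines the free $\E_1$-algebra.
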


\begin{proof}
By taking filtered colimits, we may assume that \(I\) is finite.
By definition, \(\MUR[\overline{x}_i : i\in I]\) is the tensor product over all \(i\in I\) of \(\MUR[\overline{x}_i]\) in \(\myuline{\LMod}_{\MUR}\). On the other hand, each copy of these \(\MUR[\overline{x}_i]\) admits an \(\E^{C_2}_\infty\)-\(\MUR\)-algebra structure by \cref{construction:designer_MUR_poly}.
\end{proof}

Inspired by a result of Hahn--Yuan \cite[Theorem 7.1]{HahnYuan} we formulate a lifting result using techniques from \cite{quinnZhu2026multiplicativeequivariantthomspectra}. The main application we have in mind involves the polynomial $\MUR$-algebra that we have obtained above.
\begin{proposition}\label{prop:lift}
    Let $E$ be a strongly even $\E_{\infty}^{C_2}$-spectrum and suppose that \(\pi^{C_2}_{i\rho}(E)\) is finitely generated for each \(i\in \Z\).
    Any $\E_2$-map $\Sigma_+^{\infty} \BU \to E^e$ lifts uniquely to an $\E_{\rho}$-map $\Sigma_+^{\infty}\BUR \to E$.
\end{proposition}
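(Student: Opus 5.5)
We will argue as in Hahn--Yuan's lifting theorem \cite[Theorem 7.1]{HahnYuan}, replacing their non-equivariant input by the equivariant description of $\BUR$ from \cite{quinnZhu2026multiplicativeequivariantthomspectra}. The key input is that $\BUR \simeq \Omega^{\rho}\B^{\rho}\BUR$, i.e.~$\BUR$ is a $\rho$-fold loop space, together with a presentation of $\Sigma^\infty_+\BUR$ as the free $\E_\rho$-algebra (or a cell-type colimit of free ones) on a $\rho$-cellular $C_2$-space. Concretely, we take the Real Bott-type description $\BUR \simeq \Omega^{\rho}(\ul{\Z}\times\BUR)$, so that $\E_\rho$-maps $\Sigma^\infty_+\BUR\to E$ correspond to maps of $C_2$-spaces out of a generating space built from $\CPR^\infty$-type cells, with cells only in degrees $k\rho$ (strongly even cellular structure). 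Non-equivariantly, the analogous statement is that $\E_2$-maps $\Sigma^\infty_+\BU\to E^e$ are determined by compatible data on even cells.

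The plan has three steps. First, set up an obstruction theory for $\E_\rho$-maps $\Sigma^\infty_+\BUR\to E$ by filtering the source by the even Real cell structure, so that the successive choices and obstructions live in groups of the form $\pi^{C_2}_{k\rho-1}(E)$, $\pi^{C_2}_{k\rho}(E)$ (and for uniqueness $\pi^{C_2}_{k\rho+1}(E)$). Strong evenness of $E$ gives $\pi^{C_2}_{k\rho-1}E=0$ and its $C_2$-Mackey-functor analogue, so all obstructions to existence vanish and the only data are classes in $\pi^{C_2}_{k\rho}E$. Second, compare with the underlying problem: restriction $\pi^{C_2}_{k\rho}E\to\pi^e_{2k}E$ is an isomorphism for strongly even $E$ (by definition of strong evenness, the Mackey functor $\ul{\pi}_{k\rho}E$ is constant), so each equivariant choice is uniquely determined by the underlying $\E_2$-map; this is where the identification of the underlying operad $\Res^{C_2}_e\E_\rho=\E_2$ and $\Res\BUR=\BU$ enters. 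Third, assemble the lifts along the cell filtration; the resulting tower of lifting sets must have vanishing $\lim^1$, and here we use that $\pi^{C_2}_{i\rho}(E)$ is finitely generated, so the relevant towers of sets (or groups) are Mittag-Leffler, which gives existence and uniqueness in the limit.

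The main obstacle is the first step: producing a genuinely equivariant cell/obstruction theory for $\E_\rho$-algebra maps whose cells are in degrees $k\rho$ only, since the $\E_\rho$-operad and the $C_2$-space $\BUR$ have fixed-point data ($\BUR^{C_2}=\mathrm{BO}$) that are not controlled by a naive cell count. To handle this we would use the parametrized free algebra formalism of \cite{quinnZhu2026multiplicativeequivariantthomspectra}: express $\Sigma^\infty_+\BUR$ via the Thom-spectrum/free $\E_\rho$-description and reduce the mapping space into $E$ to a limit of mapping spaces $\Map_{C_2}(S^{k\rho},\Omega^\infty E)$-type terms, then check connectivity of fixed points using the slice-type vanishing guaranteed by strong evenness. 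The finite generation hypothesis is then used only to kill the $\lim^1$ term for uniqueness.
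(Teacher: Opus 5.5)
Your Step 1 has a genuine gap, and it is the step you yourself flag as the main obstacle; the proposal does not resolve it. Two things go wrong there. First, the delooping is misidentified. Real Bott periodicity gives $\Omega^{\rho}(\ul{\Z}\times\BUR)\simeq\Omega^\rho\BUR\simeq \ul{\Z}\times\BUR$, which is not $\BUR$. The correct $\rho$-fold delooping of the grouplike $\E_\rho$-space $\BUR$ is $\B^\rho\BUR\simeq\BSUR$, the Real form of $\Omega^2\BSU\simeq\BU$. Second, $\Sigma^\infty_+\BUR$ is not a free $\E_\rho$-algebra, and you never produce an $\E_\rho$-cell structure (or a Real cell structure on any relevant space) with cells only of type $D(k\rho)$. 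The ``$\CPR^\infty$-type cells'' are asserted rather than constructed. The fixed-point data you worry about ($\BUR^{C_2}=\mathrm{BO}$) is exactly what such a construction would have to control. Without it, Steps 2 and 3 have no filtration to run on.

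The missing idea is to linearize before doing any obstruction theory. By adjunction, $\E_\rho$-ring maps $\Sigma^\infty_+\BUR\to E$ are $\E_\rho$-maps $\BUR\to\Omega^\infty E$, which factor through $\GL_1E$ since $\BUR$ is grouplike. By the equivariant recognition principle these are pointed $C_2$-maps $\BSUR\to\B^\rho\GL_1E$, i.e.\ maps of $C_2$-spectra $\Sigma^\infty\BSUR\to\Sigma^\rho\gl_1E$. This identification restricts on underlying to the description of $\E_2$-maps as maps $\Sigma^\infty\BSU\to\Sigma^2\gl_1E^e$. The input on the source is then Roytman's theorem that the $H\ul{\Z}$-cohomology of $\BSUR$ is strongly even; this is the cohomological substitute for the even Real cell structure you posit. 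The paper feeds this, together with strong evenness of $E$ and the finite generation hypothesis, into a slice tower argument on the target.

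Three smaller points. Your obstruction groups should be those of $\Sigma^\rho\gl_1E$ rather than $E$; these agree with those of $E$ in the positive range seen by $\BSUR$. Strong evenness does not kill $\pi^{C_2}_{k\rho+1}E$ (e.g.\ $\eta\in\pi^{C_2}_1\kuR$), so uniqueness needs an extra argument. The indeterminacy from these groups dies because $\pi^{C_2}_{k\rho}E\to\pi^e_{2k}E$ is injective while the underlying odd groups vanish. Finally, finite generation alone does not give Mittag--Leffler for the $\lim^1$ term.
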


\begin{proof}
    Consider the following sequence of equivalences of $C_2$-mapping spaces:
    \begin{align*}
        \ul{\Map}_{\ul{\Alg}_{\E_{\rho}}(\ul{\Sp}^{C_2})}(\Sigma_+^{\infty}\BUR, E) &\simeq \ul{\Map}_{\ul{\Alg}_{\E_{\rho}}(\ul{\Sc}^{C_2})}(\BUR, \Omega^{\infty}E)
        \\ &\simeq \ul{\Map}_{\ul{\Alg}_{\E_{\rho}}^{\gp}(\ul{\Sc}^{C_2})}(\BUR, \GL_1 E)
        \\ &\simeq \ul{\Map}_{\ul{\Sc}^{C_2}_*}(\BSU_{\R}, \mathrm{B}^{\rho}\GL_1 E)
        \\ &\simeq \ul{\Map}_{\ul{\Sp}^{C_2}}(\Sigma^{\infty}\BSU_{\R}, \Sigma^{\rho}\gl_1 E).
    \end{align*}
    The third and fourth equivalences use the equivariant recognition theorems, see \cite{Guillou2012EquivariantIL,cnossen2024normedequivariantringspectra, juran2025genuineequivariantrecognitionprinciple}.
    Roytman's verification that the Bredon cohomology of $\BSUR$ with coefficients in $\ul{\Z}$ is strongly even \cite[Theorem 6.1]{Roytman2023}, along with the assumptions we imposed, allows us to apply our cohomological slice tower techniques \cite[Remark 5.3.9]{quinnZhu2026multiplicativeequivariantthomspectra} to obtain the desired liftings.
\end{proof}

\begin{corollary} \label{prop: lift}
    Let \(I\subseteq \Z_{\geq 1}\) and let \(\{\overline{x}_i\}_{i\in I}\) denote a collection of classes in degrees \(i\rho\).
    Then any \(\E_2\)-map \(\Sigma^\infty_+\BU\to \MU[{x}_i : i\in I]\) lifts uniquely to an \(\E_\rho\)-ring map \(\Sigma^\infty_+\BU_{\R}\to \MUR[\overline{x}_i : i\in I]\).
\end{corollary}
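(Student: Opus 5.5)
The plan is to apply \cref{prop:lift} with \(E = \MUR[\overline{x}_i : i\in I]\). This requires three inputs: the \(\E^{C_2}_\infty\)-structure on \(E\), strong evenness of \(E\), and finite generation of \(\pi^{C_2}_{k\rho}(E)\) for each \(k\in\Z\). We must also identify the underlying spectrum \(E^e\) with \(\MU[x_i : i\in I]\) as an \(\E_2\)-ring, so that the given \(\E_2\)-map really lands in \(E^e\).

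The \(\E^{C_2}_\infty\)-\(\MUR\)-algebra structure is \cref{lemma:MURI}. For the underlying, the construction in \cref{construction:designer_MUR_poly} is compatible with restriction to the trivial group. The Real \(J\)-homomorphism restricts to the complex \(J\)-homomorphism, so the underlying \(\E_\infty\)-ring is Hahn--Wilson's designer \(\MU[x_i]\) (this is the footnote there), and tensor products over \(\MUR\) restrict to tensor products over \(\MU\). Hence \(E^e \simeq \MU[x_i : i\in I]\) as \(\E_\infty\)-, and in particular \(\E_2\)-, rings.

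Strong evenness follows from the description of the underlying \(C_2\)-spectrum. For finite \(I\), \(E\) is a finite tensor product over \(\MUR\) of the \(\bigoplus_{k\ge 0}\Sigma^{ki\rho}\MUR\). This is a direct sum of copies \(\Sigma^{n\rho}\MUR\), with finitely many copies for each \(n\), and \(\MUR\) is strongly even by Hill--Meier. For arbitrary \(I\) we pass to the filtered colimit over finite subsets. In each fixed degree \(n\rho\) only finitely many monomials contribute, since \(I\subseteq\Z_{\ge1}\) forces each monomial's degree to be at least \(|\text{monomial}|\cdot\rho\). So \(E\) is again a sum of \(\Sigma^{n\rho}\MUR\) with finite multiplicity in each \(n\). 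Strong evenness is closed under such sums.

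The finite generation hypothesis is the main point to check. We have \(\pi^{C_2}_{k\rho}\MUR \cong \pi_{2k}\MU\), a finitely generated free abelian group that vanishes for \(k<0\). Then
\[\pi^{C_2}_{k\rho}E \cong \bigoplus_{n\le k} \pi^{C_2}_{(k-n)\rho}\MUR^{\oplus m_n},\]
a finite sum of finitely generated groups because the multiplicities \(m_n\) are finite. Finally, \cref{prop:lift} gives existence and uniqueness of the \(\E_\rho\)-lift \(\Sigma^\infty_+\BU_\R\to E\). I expect the only genuinely delicate point to be the identification of \(E^e\) with \(\MU[x_i]\) as an \(\E_2\)-ring, rather than merely as a spectrum. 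I would handle it by noting that restriction commutes with the parametrized left Kan extensions in the construction.
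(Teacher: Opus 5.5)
Your proposal is correct and follows the paper's argument: apply \cref{prop:lift} to $E=\MUR[\overline{x}_i : i\in I]$, with the $\E^{C_2}_\infty$-structure from \cref{lemma:MURI}, and read off strong evenness and degreewise finite generation from the decomposition into finitely many copies of $\Sigma^{n\rho}\MUR$ in each degree. You also spell out two points the paper leaves implicit (``by construction'' and the footnote to \cref{construction:designer_MUR_poly}): the identification of the underlying ring with Hahn--Wilson's $\MU[x_i : i\in I]$, and the finite-multiplicity count when $I$ is infinite.
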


\begin{proof}
    By construction, \(\MURI\) is strongly even and satisfies the finiteness condition in \cref{prop:lift}.
    Moreover, \(\MURI\) admits an \(\E^{C_2}_\infty\)-\(\MUR\)-algebra structure by \cref{lemma:MURI}.
    The result follows from \cref{prop:lift}.
\end{proof}

\subsection{Multiplication on quotients of Real bordism}\label{sec:MUR_quotients}

In this section, we prove \cref{main:MUR_quotients}.
The main goal of this section is to endow quotients with a multiplicative structure. Let us begin by recalling the notion of \emph{quotients} in (equivariant) higher algebra.

\begin{definition} \label{definition: quotient}
    Let $A$ be an $\E_{\infty}^G$-algebra and $I$ be an indexing set. Consider classes $x_i \in \pi_{V_i}^G A$ for $G$-representations $V_i$. Then, we define the \tb{quotient} of $A$ by $(x_i)_{i \in I}$ by
    \[ \tb{A/(x_i)_{i \in I}} \coloneqq A \otimes_{A[x_i : i \in I]} A, \]
    with the following relevant maps:
    \begin{enumerate}[(i)]
        \item The augmentation map $A[x_i: i \in I] \to A$ is the $\E_1$-algebra map determined by $x_i \mapsto 0$.\footnote{This is done by writing $A[x_i : i \in I] \simeq \bigotimes_{i \in I}^{\LMod_A} A[x_i]$. Since $A[x_i]$ is the free $\E_1$-algebra, a map $A[x_i] \to A$ is determined by where $x_i$ is sent to. Tensoring these together and multiplying yields the desired map.}  This endows $A$ with a left $A[x_i : i \in I]$-module structure.
        \item The classes $(x_i)_{i \in I}$ determine an $\E_1$-$A$-algebra map $A[x_i : i \in I] \to A$ by sending $x_i$ to $x_i$. In particular, the base change functor $A \otimes_{A[x_i: i \in I]} -$ is defined.
    \end{enumerate}
    This definition agrees with taking a filtered colimit along iterated cofibers, as demonstrated in \cite[Section 2.4.3]{HHR16}. The augmentation map $A[x_i : i \in I] \to A$ in particular induces the projection map $A \to A/(x_i)_{i \in I}$.
\end{definition}

Many important $C_2$-spectra are quotients of $\MUR$, hence the importance of this definition. This relative tensor product description is better suited to discuss multiplicative structure, which is the reason we chose this definition. To study equivariant multiplicative structures with this construction, we will naturally need a robust theory of parametrized left module categories. We developed such a theory in \cite{quinnZhu2026multiplicativeequivariantthomspectra}.

We single out a class of examples that we are particularly interested in, namely the Real Brown--Peterson spectra.

\begin{example} \label{example: BPRJ}
    Fix a choice of indecomposable polynomial generators $\pi_* \MU_{(2)} \cong \Z_{(2)}[x_1, x_2, x_3, \cdots]$ with $|x_i| = 2i$ and write $v_i = x_{2^i-1}$. Since $\MU_{\R(2)}$ is a strongly even ring spectrum, we obtain a lift
    \[ \pi_{*\rho}^{C_2} \MU_{\R(2)} \cong \Z_{(2)}[\ol{x}_1, \ol{x}_2, \ol{x}_3, \cdots] \]
    with $|\ol{x}_i| = i \rho$ and we write $\ol{v}_i = \ol{x}_{2^i -1}$. In particular, one obtains $\BPR \simeq \MUR/(\ol{x}_i : i \neq 2^j -1 \text{ for some j})$ and we have $\pi_{*\rho}^{C_2} \BPR \cong \Z_{(2)}[\ol{v}_1, \ol{v}_2, \ol{v}_3, \cdots]$.

    Now fix a subset $J \subseteq \Z_{\geq 1}$. Then, we define
    \[ \tb{\BPR\langle J \rangle} \coloneqq \BPR/(\ol{v}_j : j \not \in J) \simeq \MUR/(\ol{x}_i, \ol{v}_j : i \neq 2^k - 1, j \not \in J). \]
    This recovers classical examples. Let $n \geq 1$.
    \begin{enumerate}[(i)]
        \item We obtain a form of the Real truncated Brown--Peterson spectra $\BPRn \simeq \BPR\langle \{1,2,\cdots,n \} \rangle$, in the sense of \cite[Definition 3.15]{hillmeier2017}.\footnote{In fact all such objects can be recovered as quotients of $\MUR$ by suitable choices of polynomial generators. This follows from the same argument as \cite[Lemma 2.0.5]{hahnWilson2022redshift}.}
        \item Let $n \geq 1$. Then, $\BPR \langle i \geq n \rangle$ is a Real version of the classical $P(n)$ spectra.
        \item Fix two positive integers $k \leq m$. Then, $\BPR \langle k,m \rangle \coloneqq \BPR \langle i \geq 1 : k \leq i \leq m \rangle$. For example, $\BPR \langle n, n \rangle$ is the integral connective $n$-th Real Morava $K$-theory. 
    \end{enumerate} These are important examples of quotients, which were e.g.~studied in \cite{beaudryhilllawsonshizeng2025slicequotientsnorms}, particularly in the higher group case. We will recall those in \cref{example: BPGJ}.
\end{example}

\begin{lemma} \label{lemma: enhance structure of quotients}
    Let $A$ be an $\E_{\infty}^G$-algebra and $I$ be an indexing set. Consider classes $x_i \in \pi_{V_i}^G A$ for $G$-representations $V_i$. Let $V$ be another $G$-representation. Suppose that $A[x_i : i \in I]$ refines to an $\E_{\R \oplus V}$-algebra and that the structure maps $A[x_i : i \in I] \to A$ from \cref{definition: quotient} refine to $\E_{\R \oplus V}$-algebra maps. Then, $A/(x_i)_{i \in I}$ refines to an $\E_V$-algebra.
\end{lemma}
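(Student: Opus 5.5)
The plan is to run an equivariant version of the standard argument that relative tensor products of $\E_{k+1}$-algebras over an $\E_{k+1}$-algebra carry $\E_k$-structures. I would use the Dunn-type additivity for equivariant little disks, $\E_{\R \oplus V} \simeq \E_1 \otimes \E_V$ (Boardman--Vogt tensor of $G$-$\infty$-operads; see \cite{horev2019genuineequivariantfactorizationhomology, Hill22disks}).

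\textbf{Step 1: regard the tensor product as bar-construction data.} Write $B \coloneqq A[x_i : i \in I]$. By hypothesis, $B$ is an $\E_{\R\oplus V}$-algebra. The two structure maps $\epsilon, x \colon B \to A$ of \cref{definition: quotient} are $\E_{\R \oplus V}$-maps, where $A$ carries the $\E_{\R\oplus V}$-structure restricted from its $\E_\infty^G$-structure. By additivity, these are $\E_1$-algebra objects with $\E_1$-maps in the $G$-symmetric monoidal $G$-$\infty$-category $\ul{\Alg}_{\E_V}(\ul{\Sp}^G)$. In that category, $A$ becomes a right $B$-module via $x$ and a left $B$-module via $\epsilon$, with all module structure maps $\E_V$-maps.

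\textbf{Step 2: form the tensor product in $\E_V$-algebras and compare.} I would form the two-sided bar construction $\mathrm{Bar}(A,B,A)$ as a simplicial object in $\ul{\Alg}_{\E_V}(\ul{\Sp}^G)$ and take its geometric realization there. Since the forgetful functor $\ul{\Alg}_{\E_V}(\ul{\Sp}^G) \to \ul{\Sp}^G$ preserves sifted (parametrized) colimits, the underlying $G$-spectrum is $A \otimes_B A = A/(x_i)_{i\in I}$. This gives the desired $\E_V$-refinement. Equivalently, one can phrase the argument as follows: $\LMod_B$ is an $\E_V$-monoidal $G$-$\infty$-category, the bimodule $A\otimes_B -$ is compatible with it, and the tensor product of $\E_V$-algebra objects is again an $\E_V$-algebra object. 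The parametrized left module theory of \cite{quinnZhu2026multiplicativeequivariantthomspectra} supplies this setting.

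\textbf{Main obstacle.} The hardest step is making the additivity and sifted-colimit statements rigorous in the parametrized setting. Concretely, this means two things:
\begin{enumerate}[(i)]
\item identifying $\ul{\Alg}_{\E_{\R\oplus V}}$ with $\ul{\Alg}_{\E_1}(\ul{\Alg}_{\E_V})$;
\item checking that $G$-geometric realizations in $\E_V$-algebras are computed underlying, i.e.\ that the $\E_V$-tensor structure on $\ul{\Sp}^G$ commutes with sifted colimits in each variable, including the norm/indexed parts.
\end{enumerate}
For (i), I would cite the equivariant Dunn additivity of Horev/Hill for $\E_{\R}\otimes\E_V \simeq \E_{\R\oplus V}$, noting that $\R$ is the trivial representation. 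For (ii), I would use that indexed tensor products in $\ul{\Sp}^G$ distribute over sifted colimits, which is the parametrized analogue of Lurie's HA 3.2.3.
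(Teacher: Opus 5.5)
Your argument is correct, but it is organized differently from the paper's proof.

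\textbf{The paper's route.} The paper never forms a bar construction inside $\E_V$-algebras. It restricts along the Dunn map $\E_1\otimes\E_V\to\E_{\R\oplus V}$; only the map is used, so your obstacle (i) disappears, since you never need the additivity to be an equivalence. It then invokes the result from \cite{quinnZhu2026multiplicativeequivariantthomspectra} that base change $\ul{\LMod}_B\to\ul{\LMod}_A$ along an $\E_1\otimes\E_V$-map is $\E_V$-monoidal. The unit $B\in\Alg_{\E_V}(\ul{\LMod}_B)$ is base changed along the augmentation and restricted back, restriction being lax $\E_V$-monoidal. This makes $A$ an $\E_V$-algebra in $\ul{\LMod}_B$. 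Base changing along $x$ then exhibits $A\otimes_B A$ as an $\E_V$-algebra in $\ul{\LMod}_A$. This is essentially the ``equivalent phrasing'' you sketch at the end of Step 2.

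\textbf{Your route.} Your primary argument realizes $\mathrm{Bar}(A,B,A)$ inside $\Alg_{\E_V}(\Sp^G)$. It only needs that $\E_V$-algebras are closed under geometric realizations computed underlying, i.e.\ that norms commute with sifted colimits. That is the same kind of distributivity input underlying the paper's parametrized module theory. So your argument avoids that machinery and is closer to the classical non-equivariant argument.

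\textbf{What each buys.} The trade-off is in the output. As written, you get an $\E_V$-algebra in $\Sp^G$, which is exactly what the lemma asserts. The paper's route lands directly in $\Alg_{\E_V}(\ul{\LMod}_A)$: it yields an $\E_V$-$A$-algebra with no $A$-linearity hypothesis on the two structure maps. That stronger form is what \cref{thm:MUR_quotients} actually uses, producing an $\E_\sigma$-$R$-algebra that is then forgotten to an $\E_\sigma$-$\MUR$-algebra. To recover it with your method, you would rerun the bar construction in $\Alg_{\E_V}(\ul{\LMod}_A)$. That requires $B$ and both maps to be $A$-linear, which holds in that application, together with distributivity of $\ul{\LMod}_A$.
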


\begin{proof}
    By restricting along the Dunn map $\E_1 \otimes \E_V \to \E_{\R \oplus V}$, we obtain maps of $\E_1 \otimes \E_V$-algebras \cite[Theorem III.2.2]{StewartDunn}.
    Now, by \cite[Corollary 3.3.9]{quinnZhu2026multiplicativeequivariantthomspectra} the base change functors are $\E_V$-monoidal. So first, consider the composition
    \begin{center}
        \begin{tikzcd}
            \Alg_{\E_V} \left(\ul{\LMod}_{A[x_i:i \in I]} \right) \arrow[rr, "A \otimes_{A[x_i:i \in I]} -"] & & \Alg_{\E_V}(\ul{\LMod}_A) \arrow[rr, "\Res^A_{A[x_i: i \in I]}"] & & \Alg_{\E_V} \left(\ul{\LMod}_{A[x_i:i \in I]} \right)
        \end{tikzcd}
    \end{center}
    Plugging in $A[x_i:i \in I]$ endows $A$ with an $\E_V$-$A[x_i:i \in I]$-algebra structure. Since the base change functor is also $\E_V$-monoidal, the resulting quotient is then an $\E_V$-algebra.
\end{proof}

The main candidate for $A$ is one of the designer polynomial algebras from the previous section, which will be the setting for the following result.

\begin{theorem}\label{thm:MUR_quotients}
    Let \(I\subseteq \Z_{\geq 1}\) and let \(\{\overline{x}_{i}\}_{i\in I}\) be a collection of classes with  \(\overline{x}_i\in \pi^{C_2}_{i\rho}\MUR\).
    Then \(\MUR/(\overline{x}_i:i\in I)\) admits an \(\E_\sigma\)-\(\MUR\)-algebra structure.
\end{theorem}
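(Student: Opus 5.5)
The plan is to apply \cref{lemma: enhance structure of quotients} with $A = \MUR$, $G = C_2$, $V = \sigma$, so that $\R \oplus V = \rho$. The quotient $\MUR/(\overline{x}_i : i\in I)$ is by \cref{definition: quotient} the relative tensor product $\MUR \otimes_{\MURI} \MUR$. It therefore suffices to show two things. First, $\MURI$ refines to an $\E_\rho$-$\MUR$-algebra. Second, both the augmentation $\overline{x}_i \mapsto 0$ and the map $\overline{x}_i \mapsto \overline{x}_i$ refine to $\E_\rho$-$\MUR$-algebra maps $\MURI \to \MUR$. The first point is immediate from \cref{lemma:MURI}, which even gives an $\E_\infty^{C_2}$-$\MUR$-algebra structure. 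For the augmentation we can use the $\E_\infty^{C_2}$-structure from the construction in \cref{construction:designer_MUR_poly}: the projection onto the degree-$0$ graded piece, which is restriction to $\ul{\Z}_{\geq 0}$ followed by truncation to $0$, is lax $C_2$-symmetric monoidal. Hence it is an $\E_\infty^{C_2}$-map.

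The essential work is the map $\MURI \to \MUR$ sending $\overline{x}_i \mapsto \overline{x}_i$, and we would approach it through Thom spectra. We reverse the roles and build an $\E_\rho$-map in the other direction, following the Basu--Sagave--Schlichtkrull and Hahn--Wilson strategy. Non-equivariantly, Hahn--Wilson produce an $\E_2$-map $\Sigma^\infty_+\BU \to \MU[x_i : i\in I]$ such that the associated Thom spectrum map $\MU \to \MU[x_i]$ is an $\E_2$-$\MU$-algebra map. The real goal, though, is an $\E_2$-$\MU$-algebra map $\MU[x_i] \to \MU$ with $x_i\mapsto x_i$. We would obtain it as follows. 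The free $\E_1$-map $\MU[x_i]\to\MU$ upgrades to $\E_2$ because $\MU[x_i]$ is an $\E_2$-$\MU$-algebra, and maps into $\MU$ are governed by orientation data. Concretely, $\E_2$-$\MU$-algebra maps out of a Thom-type object correspond to $\E_2$-maps $\Sigma^\infty_+\BU \to \MU[x_i]$, via the $\E_2$-orientation/Thom adjunction.

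Equivariantly, the chosen map gives an $\E_2$-map $\Sigma^\infty_+\BU \to \MU[x_i : i \in I]$. By \cref{prop: lift} it lifts uniquely to an $\E_\rho$-ring map $\Sigma^\infty_+\BUR \to \MURI$. Uniqueness ensures that the lift is compatible with the $\MUR$-unit and encodes the correct classes. Passing through the equivariant Thom spectrum universal property from \cite{quinnZhu2026multiplicativeequivariantthomspectra}, we obtain the desired $\E_\rho$-$\MUR$-algebra map $\MURI \to \MUR$ with $\overline{x}_i\mapsto\overline{x}_i$. To identify where $\overline{x}_i$ goes, we check on underlying homotopy and use that everything is strongly even, as in the proof of \cref{construction:designer_MUR_poly}.

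The main obstacle is precisely this translation between $\E_\rho$-maps out of $\Sigma^\infty_+\BUR$ and $\E_\rho$-$\MUR$-algebra maps $\MURI \to \MUR$. It requires an equivariant analogue of the statement that $\MU$-algebra maps correspond to twisted orientations, and the corresponding non-equivariant $\E_2$-input must be chosen so that it produces $x_i\mapsto x_i$. If this correspondence proves delicate, the fallback is to construct the map by hand. Since $\MURI$ is built from $\MUR[\overline{x}_i]$, it suffices to treat a single $i$. We would then use the lifting of \cref{prop:lift} directly with target $E = \MUR$, which is strongly even and finitely generated in degrees $*\rho$, to lift the $\E_2$-map underlying Hahn--Wilson's construction. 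Finally, we tensor the resulting maps together in $\ul{\LMod}_{\MUR}$ and apply \cref{lemma: enhance structure of quotients}.
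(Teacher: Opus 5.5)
There is a genuine gap. Your reduction via \cref{lemma: enhance structure of quotients} with $A=\MUR$, $V=\sigma$ is set up correctly, and your treatment of the augmentation (projection to graded degree $0$) is fine. However, the essential step is never actually constructed: an $\E_\rho$-$\MUR$-algebra map $\MURI\to\MUR$ with $\ol{x}_i\mapsto\ol{x}_i$. \cref{prop:lift} and \cref{prop: lift} only control $\E_\rho$-maps \emph{out of} $\Sigma^\infty_+\BUR$. By extension of scalars, these are $\E_\rho$-$A$-algebra maps out of $A[\BUR]=A\otimes\Sigma^\infty_+\BUR$. What you produce is an $\E_\rho$-map $\Sigma^\infty_+\BUR\to\MURI$. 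This corresponds to an $\E_\rho$-$\MUR$-algebra map $\MUR[\BUR]\to\MURI$, which goes \emph{into} the polynomial algebra rather than out of it, and no Thom spectrum universal property reverses this. The heuristic that the free $\E_1$-map $\MU[x_i]\to\MU$ upgrades to $\E_2$ because $\MU[x_i]$ is an $\E_2$-algebra is a non sequitur: a structure on the source says nothing about refinements of maps out of it. Your fallback fails for the same reason. Lifting with target $\MUR$ gives $\E_\rho$-maps $\MUR[\BUR]\to\MUR$. But for $I\neq\Z_{\geq 1}$, neither $\MURI$ nor a single $\MUR[\ol{x}_i]$ is of the form $\MUR[\BUR]$, and you have no $\E_\rho$-map $\MUR[\ol{x}_i]\to\MUR[\BUR]$ to restrict along.

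The missing idea is a change of base ring that turns the polynomial algebra into $R[\BUR]$. Put $R=\MUR[y_j : j\in\Z_{\geq1}\setminus I]$ with $|y_j|=j\rho$, so you adjoin generators in the \emph{complementary} degrees. By \cref{lemma:MURI}, $R$ is a strongly even $\E_\infty^{C_2}$-$\MUR$-algebra with finitely generated $\pi^{C_2}_{k\rho}R$ in each degree. Moreover, $\MUR/(\ol{x}_i : i\in I)\simeq R/(\ol{x}_i,y_j)$ is a quotient of $R$ by exactly one class in every degree $j\rho$, $j\geq 1$. Since $R$ is Real oriented, the relevant polynomial algebra is equivalent to $R[\BUR]$ as an $\E_1$-$R$-algebra. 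Hence both structure maps become $\E_1$-ring maps $\Sigma^\infty_+\BUR\to R$. Their underlying maps $\Sigma^\infty_+\BU\to R^e$ lift to $\E_2$-maps by \cite[Theorem 7.1]{HahnYuan}, a non-equivariant input your proposal also lacks. They then lift to $\E_\rho$-maps by \cref{prop: lift} applied with target $R$. Finally, \cref{lemma: enhance structure of quotients} with $A=R$ gives an $\E_\sigma$-$R$-algebra structure, which you restrict to an $\E_\sigma$-$\MUR$-algebra structure.
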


\begin{proof}
Let \(R \coloneqq \MUR[y_j\vert j\in \Z_{\geq 1}\setminus I]\) with \(|y_j| = j\rho\). 
Then \(R\) is a strongly even \(\E^{C_2}_\infty\)-\(\MUR\)-algebra by \cref{lemma:MURI}.
Moreover, 
\[ R/(\ol{x}_i,y_j: i\in I, j\in\Z_{\geq 1}\setminus I) \simeq \MUR/(\ol{x}_i :i\in I). \]
So, it suffices to construct \(R/(\ol{x}_i,y_j: i\in I, j\in\Z_{\geq 1}\setminus I)\) as an \(\E_\sigma\)-\(R\)-algebra. Then we can forget down to an \(\E_\sigma\)-\(\MUR\)-algebra structure.

In constructing this \(\E_\sigma\)-\(R\)-algebra structure, the difference between \(\ol{x}_i\) and \(y_j\) is unimportant. So we write \(R/(\overline{z}_1,\overline{z}_2,\overline{z}_3,\ldots)\) instead of \(R/(x_i,y_j: i\in I, j\in\Z_{\geq 1}\setminus I)\). By \cref{definition: quotient} this is described as a relative tensor product
\[ R/(\ol{z}_1, \ol{z}_2, \ol{z}_3, \cdots) = R \otimes_{R[\ol{z}_1, \ol{z}_2, \ol{z}_3, \cdots]} R. \]
By \cref{lemma: enhance structure of quotients} we wish to enhance the relevant maps $R[\ol{z}_1, \ol{z}_2, \ol{z}_3, \cdots] \to R$ to $\E_{\rho}$-algebra maps.

Since \(R\) is Real orientable, \(R[\BUR]\) is equivalent to \(R[z_1,z_2,z_3,\ldots]\) with $|z_{i}| = i\rho$ as $\E_1$-$R$-algebras. 
This gives two \(\E_1\)-\(R\)-algebra maps \(R[\BUR]\simeq R[z_1,z_2,z_3,\ldots] \to R\), such that \(R\otimes_{R[\BUR]}R\) is equivalent to \(R/(z_1,z_2,z_3,\ldots)\).
It remains to refine these maps to \(\E_\rho\)-\(R\)-algebra maps, which we do using \cref{prop: lift} now.

By adjunction, an \(\E_1\)-\(R\)-algebra map \(R[\BUR]\to R\) is equivalent to an \(\E_1\)-algebra map \(\Sigma^\infty_+\BUR \to R\). This forgets to a non-equivariant \(\E_1\)-algebra map \(\Sigma^\infty_+\BU\to R^e\). By  \cite[Theorem 7.1]{HahnYuan} this lifts to an \(\E_2\)-map \(\Sigma^\infty_+\BU\to R^e\). By \cref{prop: lift}, it further refines to an \(\E_\rho\)-algebra map \(\Sigma^\infty_+\BUR \to R\).
By adjunction, this defines an \(\E_\rho\)-\(R\)-algebra map \(R[\BUR] \to R\), thus refining the map we started with.
\end{proof}

\begin{remark}
    This proof recovers a version of the classical non-equivariant result that quotients of $\MU$ obtain an $\E_1$-algebra structure, see \cite{hahn2018quotientsrings}.
\end{remark}

In particular, we recover a $C_2$-equivariant version of Basu--Sagave--Schlichtkrull's result \cite[Theorem 5.6]{Basu_2017}, which writes quotients of $\MU$ as Thom spectra over $\SU(n)$ resp.~$\SU$.

\begin{corollary} \label{corollary: Real BSS}
    Let \(I\subseteq \mathbb{Z}_{\geq 1}\) and let \(\{\ol{x}_i\}_{i\in I}\) be a collection of elements with \(\ol{x}_i \in \pi^{C_2}_{i\rho}\MUR\). Then, there exists an $\E_{\sigma}$-algebra map $\mathrm{B}^2 \mathrm{U}_{\R} \to \BGL_1 R$ such that
    \[ \MU_{\R}/(\ol{x}_i : i \in I) \simeq \Th_{C_2}(\mathrm{B}^2 \mathrm{U}_{\R} \to \BGL_1 R). \]
    Here, $R = \MUR[y_j\vert j\in \Z_{\geq 1}\setminus I]$ with \(|y_j| = j\rho\) is the algebra from \cref{thm:MUR_quotients}.
\end{corollary}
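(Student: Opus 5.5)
The plan is to realize the relative tensor product from the proof of \cref{thm:MUR_quotients} as a Thom spectrum. There we showed
\[ \MUR/(\ol{x}_i : i\in I) \simeq R \otimes_{R[\BUR]} R, \]
where the second copy of $R$ is a module via an $\E_\rho$-$R$-algebra map $R[\BUR]\to R$. Such a map is adjoint to an $\E_\rho$-map $\Sigma^\infty_+\BUR\to R$, obtained from \cref{prop: lift}. Its counterpart is the augmentation, which comes from the trivial map. I will pass from ring maps to maps of group-like monoids and then to deloopings. This is the same chain of equivalences as in the proof of \cref{prop:lift}.

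First I take units. An $\E_\rho$-map $\Sigma^\infty_+\BUR\to R$ corresponds to an $\E_\rho$-map of group-like $C_2$-spaces $f\colon\BUR\to\GL_1 R$. Next I deloop once in the trivial direction. Writing $\rho=1+\sigma$ and using the equivariant Dunn additivity $\E_\rho\simeq\E_1\otimes\E_\sigma$ together with the recognition principle, $f$ is equivalently an $\E_\sigma$-map of group-like $\E_\sigma$-spaces
\[ \B f\colon \B\BUR\simeq \B^2\mathrm U_\R\longrightarrow \BGL_1 R. \]
Here $\B$ is the delooping along the trivial $\R$-coordinate, so $\B\BUR$ is the trivial-direction classifying space. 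One should check that this agrees with the space the statement calls $\B^2\mathrm U_\R$, namely $\B\,\B^{\sigma+?}$ in the appropriate indexing. Since $\BUR = \B^\rho$ of $\mathrm U_\R$ up to a shift, I expect a one-line bookkeeping remark to suffice.

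The main step is the Thom spectrum identification. For a group-like $\E_1$-space $M$ and a map $g\colon \B M\to \BGL_1 R$, there is the general formula
\[ \Th(g)\simeq R\otimes_{R[M]} R, \]
where $R[M]\to R$ is the map induced by $\Omega g$ and the other map is the augmentation. This holds because $\Th$ is a colimit over $\B M$ of the $R$-line bundle, and a colimit over $\B M$ is the bar construction $|\mathrm{Bar}(R,R[M],R)|$. I would prove the parametrized version by viewing $\B M$ as the $C_2$-colimit of the simplicial bar construction. The equivariant Thom functor $\Th_{C_2}$ commutes with parametrized colimits as a parametrized left Kan extension, so it sends this diagram to the two-sided bar construction computing the relative tensor product in $\ul{\LMod}_R$. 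Applied with $M=\BUR$ and $g=\B f$, this gives $\Th_{C_2}(\B f)\simeq R\otimes_{R[\BUR]}R\simeq \MUR/(\ol{x}_i)$.

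I expect the main obstacle to be multiplicativity, namely that this equivalence respects the $\E_\sigma$-structures. The Thom functor is $C_2$-symmetric monoidal, so $\Th_{C_2}(\B f)$ is an $\E_\sigma$-$R$-algebra because $\B f$ is an $\E_\sigma$-map. I would compare this with the structure from \cref{lemma: enhance structure of quotients}. Both arise from the $\E_1\otimes\E_\sigma$-structure on the bar construction, and the bar construction is $\E_\sigma$-monoidal by \cite[Corollary 3.3.9]{quinnZhu2026multiplicativeequivariantthomspectra}. Only the existence of an $\E_\sigma$-map with this Thom spectrum is claimed, so it is enough that $\Th_{C_2}(\B f)$ carries an $\E_\sigma$-structure and is equivalent to the quotient as $R$-modules. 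A full comparison of the multiplicative structures is therefore optional.
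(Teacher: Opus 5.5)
Your proposal is correct and follows the paper's route: take the $\E_\rho$-$R$-algebra map $R[\BUR]\to R$ from the proof of \cref{thm:MUR_quotients}, and pass by adjunction (units, then the trivial-direction delooping via $\E_\rho\simeq\E_1\otimes\E_\sigma$) to an $\E_\sigma$-map $\B\BUR\to\BGL_1 R$; then use that $\Th_{C_2}$ is symmetric monoidal and colimit-preserving to identify the Thom spectrum of the bar construction with $R\otimes_{R[\BUR]}R$. The only slip is your aside that $\BUR$ is $\B^\rho$ of $\mathrm U_\R$ up to a shift, which is not right and not needed: $\BUR$ is the levelwise classifying space $\B\mathrm U_\R$, so $\B^2\mathrm U_\R$ is by definition your $\B\BUR$ (it is the $\rho$-fold delooping of $\BUR$ that gives $\BSUR$, as in the proof of \cref{prop:lift}).
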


\begin{proof}
    In \cref{thm:MUR_quotients} we showed 
    \[ \MU_{\R}/(x_i : i \in I) \simeq R\otimes_{R[\BU_{\R}]} R \]
    and in particular obtained an $\E_{\rho}$-$R$-algebra map $R[\BU_{\R}] \to R$ to define this base change. By adjunction, it corresponds to an $\E_{\sigma}$-algebra map $\B^2 U_{\R} \to \BGL_1 R$. Since $\Th_{C_2}$ is symmetric monoidal and colimit-preserving, we can massage the Thom spectrum of a bar construction into the desired relative tensor product: $\Th_{C_2}(\mathrm{B}^2 \mathrm{U}_{\R} \to \BGL_1 R) \simeq R\otimes_{R[\BU_{\R}]} R$.
\end{proof}

\begin{remark} \label{remark: BSS hard}
    Note that $\mathrm{B^2} \mathrm{U}_{\R} \simeq \mathrm{B} \Omega^{\sigma} \SU_{\R}$. This restricts to $\SU$ on underlying, and explains the difficulty of generalizing \cite[Theorem 5.6]{Basu_2017} to the Real setting. In doing so, one needs to take $\mathrm{B}\Omega^{\sigma} \SU_{\R}$ as a $C_2$-equivariant lift of $\SU$, not $\SU_{\R}$. However, even with this realization, it seems hard to naively mimic Basu--Sagave--Schlichtkrull's arguments to the $C_2$-equivariant setting without lifting techniques such as the ones we employ here.
    
    The main result \cite[Theorem 5.6]{Basu_2017} builds on the classical map $\mathbb{CP}^{n-1} \to \Omega \SU(n)$, which is e.g.~defined in \cite[Introduction]{HahnYuan}. We checked that this enhances to a $C_2$-map $\CPR^{n-1} \to \Omega^{\sigma} \SU_{\R}(n)$. However, several complications still show up in trying to naively mimic the strategy in \cite{Basu_2017}.

    Nonetheless, we expect that this $C_2$-enhancement along with \cref{prop: lift} is a first step towards a Real equivariant version of Hahn--Yuan's result, and thereby gives a direction towards \cite[Question 4]{HahnYuan}.
\end{remark}

\begin{remark} \hfill \label{remark: MUR quotient theorem}
    \begin{enumerate}[(i)]
        \item More generally, we can base change along any $\E_{\rho}$-map $\MUR \to E$. By \cite[Corollary 6.2.2]{quinnZhu2026multiplicativeequivariantthomspectra} we are thus able to give an $\E_{\sigma}$-structure on quotients of strongly even $\E_{\infty}^{C_2}$-rings by homogeneous elements coming from $\MUR$.
        \item So far we have worked integrally. All constructions and results used have obvious \((2)\)-local analogues. 
        Moreover, the proofs in the \((2)\)-local setting follow mutatis mutandis from the integral ones. This gives a \((2)\)-local version of \cref{thm:MUR_quotients}.
        \item Let $i > 0$, then $\BPR/v_i$ does not admit the structure of a homotopy associative ring \cite[Example 3.15]{bachmannHahn2022nilpotencenormedmglmodules}, so in particular there is no $\E_1$-analog of \cref{thm:MUR_quotients}.
    \end{enumerate}
\end{remark}

As an immediate application, we obtain an $\E_{\sigma}$-$\MUR$-algebra structure on $C_2$-spectra of interest.
\begin{corollary}\label{cor:BPRnkRn}
        Let $J \subseteq \Z_{\geq 1}$. Then, $\BPR \langle J \rangle$ admits an $\E_{\sigma}$-$\MUR$-algebra structure. 
\end{corollary}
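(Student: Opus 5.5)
The plan is to exhibit $\BPR\langle J\rangle$ directly as a quotient of $\MUR$ by classes in degrees $i\rho$, and then invoke \cref{thm:MUR_quotients}, in its $(2)$-local form from \cref{remark: MUR quotient theorem}(ii). By \cref{example: BPRJ} there is an equivalence
\[ \BPR\langle J\rangle \simeq \MU_{\R(2)}/(\ol{x}_i, \ol{v}_j : i \neq 2^k-1,\ j \notin J), \]
where $\ol{x}_i \in \pi^{C_2}_{i\rho}\MU_{\R(2)}$ and $\ol{v}_j = \ol{x}_{2^j-1} \in \pi^{C_2}_{(2^j-1)\rho}\MU_{\R(2)}$. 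First we check that the index set
\[ I = \{ i \geq 1 : i \neq 2^k-1 \text{ for all } k\} \cup \{2^j - 1 : j \notin J\} \subseteq \Z_{\geq 1} \]
gives at most one class in each degree $i\rho$, as the statement of \cref{thm:MUR_quotients} requires. This holds because the two pieces of $I$ are disjoint and each is indexed by degree.

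Next we apply \cref{thm:MUR_quotients} in its $2$-local variant. Its proof goes through verbatim with $\MU_{\R(2)}$ in place of $\MUR$: the designer algebra $R = \MU_{\R(2)}[y_j : j \notin I]$ is still strongly even, and it has finitely generated $\pi^{C_2}_{i\rho}$ over $\Z_{(2)}$. The lifting step \cref{prop:lift} uses Roytman's strong evenness and slice techniques, and these localize without change. This produces an $\E_\sigma$-$\MU_{\R(2)}$-algebra structure on the quotient.

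Finally we restrict along the $\E^{C_2}_\infty$-map $\MUR \to \MU_{\R(2)}$, which gives an $\E_\sigma$-$\MUR$-algebra structure. Alternatively, and to avoid localizing, we can work integrally from the start. The indecomposable generators $x_i \in \pi_*\MU$ can be chosen so that their $2$-localizations give the generators fixed in \cref{example: BPRJ}, and the integral quotient then localizes to $\BPR\langle J\rangle$. Since localization is smashing, the $2$-localization of an $\E_\sigma$-$\MUR$-algebra is again one.

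The main point to get right is bookkeeping rather than new mathematics. We must make sure that the quotient in \cref{definition: quotient} by the full family agrees with the iterated quotient $\BPR/(\ol{v}_j : j\notin J)$ of $\MUR/(\ol{x}_i : i \neq 2^k-1)$ used in \cref{example: BPRJ}. This follows from the tensor-product description, since
\[ A[x_i : i \in I] \simeq \bigotimes_{i\in I} A[x_i], \]
so quotienting in stages is the same as quotienting all at once. Beyond that, the only potential obstacle is confirming that the $2$-local inputs of \cref{prop:lift} go through, which \cref{remark: MUR quotient theorem}(ii) already asserts.
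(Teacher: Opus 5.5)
Your argument is correct and is essentially the paper's. The paper gives no separate proof: the corollary is stated as an immediate application of writing $\BPR\langle J\rangle$ as the quotient of $\MU_{\R(2)}$ from \cref{example: BPRJ}, applying the $(2)$-local form of \cref{thm:MUR_quotients} recorded in \cref{remark: MUR quotient theorem}(ii), and restricting scalars to $\MUR$.

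Your integral alternative is unnecessary, and its claim is false in general. Integral generators cannot always be chosen to localize to an arbitrary fixed $2$-local choice: a $2$-local generator such as $y + z/3$, with $z$ decomposable and not divisible by $3$, is not integral. This is harmless, because \cref{thm:MUR_quotients} imposes no generator condition on the classes $\ol{x}_i$. Any integral class differing from the given one by an odd unit would therefore do.
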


By \cref{example: BPRJ} this in particular includes the truncated Real Brown--Peterson spectra $\BPRn$ and the integral connective Real Morava $K$-theories.

One of the main interests in \(\E_\sigma\)-algebra structures is their connection to Real trace methods.
An \(\E_\sigma\)-algebra structure allows one to define Real topological Hochschild homology. As an immediate corollary of \cref{thm:MUR_quotients}, we will be able to apply \(\THR(-)\) to such quotients of \(\MUR\).

\begin{remark}
    Both \(\THR(\BPRn)\) and \(\THR(\BPRn/\MUR)\) are defined for all \(n\).
\end{remark}
This should be the starting point of running the story of \cite{gabe2025realsyntomiccohomology} for $\BPRn$ with $n \geq 3$.

\subsection{Multiplication on twisted monoid quotients of normed bordism}\label{sec:MUG_quotients}
Let us recall Hill--Hopkins--Ravenel's method of \emph{twisted monoid rings} from \cite[Section 2.4.2]{HHR16}, which yields a normed variant of the classical quotient notion (\cref{definition: quotient}).
We will work in less generality.
We specialize their construction to the case where the spectrum we want to form twisted monoid ring quotients of is the Hill--Hopkins--Ravenel norm of a \(\E^{C_2}_\infty\)-ring spectrum.

Due to the similarity of the definitions, the arguments will also be almost identical. Nonetheless, we will need to spell these out again, since twisted monoid ring quotients are not a special case of quotients.
\begin{construction}[{\cite[Section 2.4]{HHR16}}]\label{construction:TMR2}
    Fix a finite group \(G \geq C_2 \).
    Let \(R\) be an \(\E^{C_2}_\infty\)-ring spectrum. Moreover, let \(I\subseteq \Z_{\geq 1}\) and let \(\{\ol{x}_i\}_{i\in I}\) be a collection of elements with \(\overline{x}_i\in \pi^{C_2}_{i\rho} \RG\). 
    \begin{enumerate}[(i)]
        \item We write \(\tb{\RG} \coloneqq N^G_{C_2}R\) and $\tb{R^{( \! ( G ) \! )}[G \cdot \ol{x}_i : i \in I]} \coloneqq N_{C_2}^G(R[\ol{x}_i : i \in I])$. 
        \item We define the \tb{twisted monoid ring quotient} as the relative tensor product 
        \[
            \tb{\RG/(G\cdot \overline{x}_i:i\in I)} \coloneqq \RG\otimes_{\RG[G\cdot \overline{x}_i:i\in I]}\RG,
        \]
        consisting of the following structure:

        First, sending $\ol{x}_i$ to $0$ determines augmentation maps $R[\ol{x}_i : i \in I] \to \Res_{C_2}^G R^{( \! ( G ) \! )}$ as $\E_1$-$R$-algebra maps. The composite
        \[
        \RG[G\cdot \overline{x}_i:i\in I] = N^G_{C_2}(R[\overline{x}_i:i\in I])\longrightarrow N^{G}_{C_2}\mathrm{Res}^G_{C_2}\RG\longrightarrow \RG
         \]
        is an $\E_1$-algebra map. This gives $R^{( \! ( G ) \! )}$ the structure of a left $\RG[G\cdot \overline{x}_i:i\in I]$-module.

        On the other hand, the elements $\ol{x}_i \in \pi_{i\rho}^{C_2} R^{( \! ( G ) \! )}$ define a $\Coind_{C_2}^G \E_1$-$R^{( \! ( G ) \! )}$-algebra map 
        \[ \RG[G\cdot \overline{x}_i:i\in I] = N^G_{C_2}(R[\overline{x}_i:i\in I])\longrightarrow N^{G}_{C_2}\mathrm{Res}^G_{C_2}\RG\longrightarrow \RG \] in the same fashion, so we can base change along this.
    \end{enumerate}
    The augmentation map $\RG[G\cdot \overline{x}_i:i\in I] \to \RG$, i.e.~the first of these composites, induces the projection map $\RG \to \RG/(G \cdot \ol{x}_i : i \in I)$.
\end{construction}

In principle, we could have phrased \cref{sec:MUR_quotients} in this generality and recovered the results there as a special case for $G = C_2$. For readability we decided to split it into two parts. 

In \cref{section: twisted monoid quotients via parametrized colimits} we give an alternative viewpoint of twisted monoid quotients through parametrized colimits. Working with that definition could have been a different entry point to the theory, but we have decided to keep close to the classical language.

This subsection concerns multiplicative structures on normed equivariant ring spectra, which are naturally given by coinduced equivariant operads, see e.g.~\cite[Construction 2.2.5]{quinnZhu2026multiplicativeequivariantthomspectra}. We will essentially only need \cite[Construction 2.2.5, Corollary 6.1.6]{quinnZhu2026multiplicativeequivariantthomspectra} about this construction.

\begin{lemma} \label{lemma: enhance structure of twisted monoid quotients}
    Let $A$ be an $\E_{\infty}^{C_2}$-algebra and $I$ be an indexing set. Consider classes $\ol{x}_i \in \pi_{V_i}^{C_2} A^{( \! ( G ) \! )}$ for $C_2$-representations $V_i$. Let $V$ be another $C_2$-representation. Suppose that $A[\ol{x}_i : i \in I]$ refines to an $\E_{\R \oplus V}$-algebra and that the structure maps 
    \[ A[\ol{x}_i : i \in I] \longrightarrow \Res_{C_2}^G A^{( \! ( G ) \! )} \] from \cref{construction:TMR2} refine to $\E_{\R \oplus V}$-algebra maps.
    Then, $A^{( \! ( G ) \! )}/(G \cdot x_i)_{i \in I}$ refines to a $\Coind_{C_2}^G\E_V$-algebra.
\end{lemma}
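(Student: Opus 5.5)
The plan is to mirror the proof of \cref{lemma: enhance structure of quotients}, inserting the norm $N_{C_2}^G$ at the start and using that norms of structured algebras carry coinduced operadic structure. First I restrict the given $\E_{\R\oplus V}$-structures along the Dunn map $\E_1\otimes\E_V\to\E_{\R\oplus V}$ \cite[Theorem III.2.2]{StewartDunn}. This makes $A[\ol{x}_i : i\in I]$ an $\E_1\otimes\E_V$-algebra, and the two structure maps $A[\ol{x}_i : i \in I]\to \Res^G_{C_2}A^{(\!(G)\!)}$ (augmentation and classifying map of the $\ol{x}_i$) become $\E_1\otimes\E_V$-maps. Next I apply $N_{C_2}^G$. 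By \cite[Construction 2.2.5, Corollary 6.1.6]{quinnZhu2026multiplicativeequivariantthomspectra}, the norm sends $\mathcal O$-algebras to $\Coind_{C_2}^G\mathcal O$-algebras. Coinduction is a right adjoint on $G$-$\infty$-operads and commutes with the Boardman--Vogt tensor product in the relevant sense, so I expect $\Coind_{C_2}^G(\E_1\otimes\E_V)\simeq\Coind_{C_2}^G\E_1\otimes\Coind_{C_2}^G\E_V$, at least receiving a comparison map. Hence $A^{(\!(G)\!)}[G\cdot\ol{x}_i:i\in I]$ is a $\Coind_{C_2}^G\E_1\otimes\Coind_{C_2}^G\E_V$-algebra, and its maps to $N^G_{C_2}\Res^G_{C_2}A^{(\!(G)\!)}$ are maps of such.

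Then I postcompose with the counit $N^G_{C_2}\Res^G_{C_2}A^{(\!(G)\!)}\to A^{(\!(G)\!)}$. This is an $\E^G_\infty$-map because $A^{(\!(G)\!)}$ is an $\E_\infty^G$-algebra, hence a map of algebras over any $G$-operad, in particular over $\Coind_{C_2}^G\E_1\otimes\Coind_{C_2}^G\E_V$. This yields the two structure maps of \cref{construction:TMR2} as maps of $\Coind_{C_2}^G\E_V$-algebras in $\Coind_{C_2}^G\E_1$-algebras, i.e.\ as $\Coind^G_{C_2}\E_V$-algebras in $\ul{\LMod}_{A^{(\!(G)\!)}[G\cdot\ol{x}_i:i\in I]}$, where the left module structure is taken in the parametrized sense. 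Here I use that $\Coind_{C_2}^G\E_1$-algebras, like $\E_1$-algebras, have parametrized left module categories; this is the setting of \cite{quinnZhu2026multiplicativeequivariantthomspectra}.

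The final step repeats the composite from \cref{lemma: enhance structure of quotients}. I base change $A^{(\!(G)\!)}\otimes_{A^{(\!(G)\!)}[G\cdot\ol{x}_i:i\in I]}-$ and restrict back. By \cite[Corollary 3.3.9]{quinnZhu2026multiplicativeequivariantthomspectra} both functors are $\mathcal O$-monoidal for $\mathcal O=\Coind^G_{C_2}\E_V$. Feeding in $A^{(\!(G)\!)}[G\cdot\ol{x}_i:i\in I]$ makes $A^{(\!(G)\!)}$, via the augmentation, an $\mathcal O$-algebra in left modules. Base changing it along the second map then gives the twisted monoid quotient an $\mathcal O$-algebra structure.

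The main obstacle is the operadic bookkeeping in the first two steps. I need to check that the norm carries the Dunn-restricted structure to a structure for which \cite[Corollary 3.3.9]{quinnZhu2026multiplicativeequivariantthomspectra} applies. Concretely, this means checking that $\Coind_{C_2}^G(\E_1\otimes\E_V)$ maps to, or is equivalent to, $\Coind_{C_2}^G\E_1\otimes\Coind_{C_2}^G\E_V$. It also means checking that the parametrized module theory over a $\Coind_{C_2}^G\E_1$-algebra behaves like the $\E_1$ case. If the tensor compatibility fails, I would instead norm the $\E_1$-monoidal base change statement directly: I would argue fiberwise over orbits $G/H$ using the double coset formula for $\Res_H N_{C_2}^G$, which reduces to the $C_2$-case of \cref{lemma: enhance structure of quotients} applied to indexed smash products.
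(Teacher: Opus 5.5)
Your skeleton is the paper's: Dunn restriction, norming to obtain $\Coind^G_{C_2}(\E_1\otimes\E_V)$-structures and maps, postcomposing with the counit $N^G_{C_2}\Res^G_{C_2}A^{(\!(G)\!)}\to A^{(\!(G)\!)}$, and base change via \cite[Corollary 3.3.9]{quinnZhu2026multiplicativeequivariantthomspectra}. However, the only step of this lemma with new content is the operadic comparison, and you leave exactly that as an expectation. As written it has three problems.

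\emph{Direction.} To restrict the normed structure you need a map \emph{into} $\Coind^G_{C_2}(\E_1\otimes\E_V)$. Your last paragraph instead asks for a map out of it, which would be useless.

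\emph{Justification.} Being a right adjoint gives $\Coind^G_{C_2}$ no compatibility with the Boardman--Vogt tensor product. The relevant input is that $\Res^G_{C_2}$ is symmetric monoidal for Stewart's tensor product \cite[Corollary E']{stewart2025tensorproductsequivariantcommutative}. This makes $\Coind^G_{C_2}$ lax monoidal and yields a map $\Coind\E_1\otimes\Coind\E_V\to\Coind(\E_1\otimes\E_V)$. Nothing suggests an equivalence, and none is needed.

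\emph{Target.} \cite[Corollary 3.3.9]{quinnZhu2026multiplicativeequivariantthomspectra} concerns $\E_1\otimes\mathcal O$-algebras with the ordinary $\E_1$. Routing through $\Coind\E_1$ therefore adds an unverified claim about left module categories over $\Coind\E_1$-algebras.

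The paper avoids all of this by constructing directly a map $\E_1\otimes\Coind^G_{C_2}\E_V\to\Coind^G_{C_2}(\E_1\otimes\E_V)$. By adjunction this is a map $\Res^G_{C_2}(\E_1\otimes\Coind^G_{C_2}\E_V)\to\E_1\otimes\E_V$. Since restriction is symmetric monoidal, the source is $\E_1\otimes\Res^G_{C_2}\Coind^G_{C_2}\E_V$, so one can take $\E_1\otimes\varepsilon_{\E_V}$ with $\varepsilon$ the counit. Restricting the normed structures along this map lands exactly in the hypotheses of \cite[Corollary 3.3.9]{quinnZhu2026multiplicativeequivariantthomspectra} with $\mathcal O=\Coind^G_{C_2}\E_V$, and the rest of your argument then goes through unchanged. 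Your version can be repaired the same way: precompose the lax comparison map with $\eta\otimes\mathrm{id}$, where $\eta\colon\E_1\to\Coind^G_{C_2}\Res^G_{C_2}\E_1=\Coind^G_{C_2}\E_1$ is the unit.

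Your fallback of arguing fiberwise over orbits $G/H$ via the double coset formula would not suffice. A $\Coind^G_{C_2}\E_V$-algebra structure includes norm structure maps relating different levels, and these cannot be assembled from levelwise $C_2$-equivariant statements.
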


\begin{proof}
    Restricting $\E_{\R \oplus V}$ along the Dunn map yields $\E_1 \otimes \E_V$-structures \cite[Theorem III.2.2]{StewartDunn}. By \cite[Construction 2.2.5]{quinnZhu2026multiplicativeequivariantthomspectra} the $G$-spectrum $A^{( \! ( G ) \! )}[G \cdot \ol{x}_i]$ obtains a $\Coind_{C_2}^G(\E_1 \otimes \E_V)$-algebra structure and the composite maps
    \[
        A^{( \! ( G ) \! )}[G\cdot \overline{x}_i:i\in I] = N^G_{C_2}(A[\overline{x}_i:i\in I])\longrightarrow N^{G}_{C_2}\mathrm{Res}^G_{C_2}A^{( \! ( G ) \! )} \longrightarrow A^{( \! ( G ) \! )}
    \]
    refine to $\Coind_{C_2}^G(\E_1 \otimes \E_V)$-algebra maps. We now check that there is a map
    \[ \E_1 \otimes \Coind_{C_2}^G \E_V \longrightarrow \Coind_{C_2}^G(\E_1 \otimes \E_V) \]
    of $G$-$\infty$-operads. Afterwards, we obtain the desired algebra structures by base changing as in \cref{lemma: enhance structure of quotients}.

    By adjunction, such a map is equivalent to a map
    \[ \Res_{C_2}^G (\E_1 \otimes \Coind_{C_2}^G \E_{V}) \longrightarrow \E_1 \otimes \E_{V}. \]
    Stewart proved that $\ul{\Op}_{G, \infty}$ has a $G$-symmetric monoidal enhancement \cite[Corollary E']{stewart2025tensorproductsequivariantcommutative}. So the associated restriction maps are symmetric monoidal \cite[Lemma A.1.1]{quinnZhu2026multiplicativeequivariantthomspectra}. Thus, we may take the map
    \[ \Res_{C_2}^G (\E_1 \otimes \Coind_{C_2}^G \E_{V}) \simeq \E_1 \otimes \Res_{C_2} \Coind_{C_2}^G \E_{V} \xrightarrow{\E_1 \otimes \varepsilon_{\E_{V}}} \E_1 \otimes \E_{V} \]
    induced by the counit.
\end{proof}

\begin{remark}
    One can set up twisted monoid quotients in more generality for arbitrary $\E_{\infty}^G$-ring spectra and subgroups $H \leq G$ following Hill--Hopkins--Ravenel \cite{HHR16}. We chose to restrict to examples $R^{( \! ( G ) \! )}$ to be able to phrase \cref{lemma: enhance structure of twisted monoid quotients} in its current form.
\end{remark}

\begin{theorem}\label{thm:MUG_quotients}
    Let \(I\subseteq \Z_{\geq 1}\) and let \(\{\overline{x}_{i}\}_{i\in I}\) be a collection of elements with \(\overline{x}_i\in \pi^{C_2}_{i\rho}\MUG\).
    Then, \(\MUG/(G\cdot \overline{x}_i:i\in I)\) admits a \(\Coind^G_{C_2}\E_\sigma\)-\(\MUG\)-algebra structure.
\end{theorem}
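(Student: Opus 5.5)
We follow the proof of \cref{thm:MUR_quotients}, replacing \cref{lemma: enhance structure of quotients} by \cref{lemma: enhance structure of twisted monoid quotients} with $V = \sigma$, so that $\R \oplus V = \rho$.

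First we reduce to a situation where every generator is killed. Set $R \coloneqq \MUR[y_j \mid j \in \Z_{\geq 1}\setminus I]$ with $|y_j| = j\rho$. By \cref{lemma:MURI} this is a strongly even $\E_\infty^{C_2}$-$\MUR$-algebra. Its norm $\RG = N_{C_2}^G R$ is then a $\Coind_{C_2}^G\E_\infty^{C_2}$-algebra under $\MUG$; since $\Coind_{C_2}^G \E^{C_2}_\infty$ maps to $\E^G_\infty$ via the norm, we regard it simply as an algebra under $\MUG$. Since norms are symmetric monoidal and commute with the relevant relative tensor products, we expect
\[ \RG/(G\cdot \ol{x}_i, G\cdot y_j) \simeq \MUG/(G\cdot \ol{x}_i : i\in I). \]
Here we view the $\ol{x}_i$ as classes of $\RG$ through the unit $\MUG \to \RG$, and the $y_j$ through the inclusion $R \to \Res^G_{C_2}\RG$. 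This identification follows because $N_{C_2}^G$ preserves the bar constructions defining these quotients. Relabelling all generators as $\ol{z}_1, \ol{z}_2, \dots$ with $|\ol{z}_k| = k\rho$, it then suffices to give $\RG/(G\cdot \ol{z}_k)$ a $\Coind^G_{C_2}\E_\sigma$-$\RG$-algebra structure and forget down to $\MUG$.

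Next we apply \cref{lemma: enhance structure of twisted monoid quotients} with $A = R$ and $V=\sigma$. This requires refining the two $\E_1$-$R$-algebra maps $R[\ol{z}_1,\ol{z}_2,\dots] \to \Res^G_{C_2}\RG$, the augmentation and the map $\ol{z}_k \mapsto \ol{z}_k$, to $\E_\rho$-maps. As in \cref{thm:MUR_quotients}, Real orientability of $R$ gives $R[\BUR] \simeq R[\ol{z}_1,\ol{z}_2,\dots]$ as $\E_1$-$R$-algebras. By adjunction, each map corresponds to an $\E_1$-map $\Sigma^\infty_+\BUR \to E \coloneqq \Res^G_{C_2}\RG$. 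We pass to underlying $\E_1$-maps, upgrade these to $\E_2$-maps via \cite[Theorem 7.1]{HahnYuan}, and then lift uniquely to $\E_\rho$-maps using \cref{prop:lift}. Adjoining back gives $\E_\rho$-$R$-algebra maps $R[\BUR] \to E$ refining the originals; uniqueness of the lift ensures it is compatible with the given $\E_1$-map. The lemma then produces the $\Coind^G_{C_2}\E_\sigma$-structure.

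The main obstacle is verifying the hypotheses of \cref{prop:lift} for $E = \Res^G_{C_2} N^G_{C_2} R$.
\begin{itemize}
\item[(a)] $E$ must be an $\E^{C_2}_\infty$-ring. This holds because restriction of a norm of an $\E_\infty^{C_2}$-ring is $\E^{C_2}_\infty$ by the indexed tensor product formula.
\item[(b)] $E$ must be strongly even. We would deduce this from HHR's slice/homotopy computations for $\Res^G_{C_2}\MUG$, which is a smash product of copies of $\MUR$ and therefore Real oriented with $\pi_{*\rho}$ polynomial. The free polynomial factors in $R$ preserve strong evenness, since the underlying $C_2$-spectrum is a sum of $\rho$-suspensions.
\item[(c)] Each $\pi^{C_2}_{i\rho}E$ must be finitely generated. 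This follows from the polynomial description on finitely many generators in each degree, with $I$ reduced to finite by filtered colimits as in \cref{lemma:MURI}.
\end{itemize}
If strong evenness of $\Res^G_{C_2}\MUG$ is not directly citable, we would prove it via the HHR identification $\pi_{*\rho}^{C_2}\Res\MUG \cong \Z[G\cdot \ol{r}_i]$ together with \cite[Lemma 3.4]{hillmeier2017}-style arguments.
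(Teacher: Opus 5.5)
Your proposal is correct and is essentially the paper's own proof: it uses the same auxiliary algebra $R=\MUR[y_j : j\in \Z_{\geq 1}\setminus I]$, identifies the source $R[\ol{z}_1,\ol{z}_2,\ldots]$ with $R[\BUR]$, applies the Hahn--Yuan $\E_2$-upgrade followed by \cref{prop:lift}, and then invokes \cref{lemma: enhance structure of twisted monoid quotients} with $V=\sigma$. The only addition is that you spell out the hypotheses of \cref{prop:lift} for $E=\Res^G_{C_2}\RG$, which the paper leaves implicit: strong evenness, via the restriction being a smash power of copies of $R$, and degreewise finite generation, which holds for any $I$ without reducing to the finite case.
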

\begin{proof}
    Let \(R \coloneqq \MUR[\overline{y}_j:j\in \Z_{\geq 1}\setminus I]\). 
    It is an \(\E^{C_2}_\infty\)-\(\MUR\)-algebra by \cref{lemma:MURI}.
    Then, 
    \[ \RG \simeq \MUG[G\cdot\overline{y}_j:j\in \Z_{\geq 1}\setminus I]. \]
    Hence, 
    \begin{align*}
        \MUG/(G\cdot\overline{x}_i :i\in I) &\simeq \RG/(G\cdot\overline{x}_i, G\cdot \overline{y}_j :i\in I, j\in \Z_{\geq 1}\setminus I)
        \\ &= \RG \otimes_{\RG[G \cdot \ol{x}_i, G \cdot \ol{y}_j : i \in I, j \in \Z_{\geq 1} \setminus I]} \RG.
    \end{align*}
    The difference between \(\overline{x}_i\) and \(\overline{y}_j\) is not important for the purpose of constructing multiplicative structures. 
    So instead of \(\RG/(G\cdot\overline{x}_i, G\cdot \overline{y}_j :i\in I, j\in \Z_{\geq 1}\setminus I)\) we write \(\RG/(G\cdot\overline{z}_1,G\cdot\overline{z}_2,\ldots)\).

    By \cref{lemma: enhance structure of twisted monoid quotients} we need to enhance the relevant map \(R[\overline{z}_1,\overline{z}_2,\ldots ]\to \mathrm{Res}^G_{C_2}\RG\) to an \(\E_\rho\)-\(R\)-algebra map.
    As in the proof of \cref{thm:MUR_quotients}, the source is equivalent to $R[\BUR]$, so this follows from \cref{prop:lift}.
\end{proof}

\begin{remark}
    As in the previous subsection, \cref{thm:MUG_quotients} has an obvious \((2)\)-local version.
\end{remark}

The main interest in the twisted monoid ring construction lies in the case $G = C_{2^n}$, where it has been used to construct the higher truncated Brown--Peterson spectra. To discuss its multiplicative structure, we will introduce another base change.

\begin{definition}[{\cite[Section 2.4.3]{HHR16}}] \label{definition: module twisted monoid quotient}
    Let \(R\) be an \(\E^G_\infty\)-ring and let $R \to A$ be an $\E_1$-algebra map. Let $I$ be an indexing set and let \(\{\overline{x}_{i}\}_{i\in I}\) be a collection of elements with \(\overline{x}_i\in \pi^{C_2}_{i\rho}R\).  Then, we define 
    \[ \tb{A/(G\cdot \overline{x}_{i} : i\in I)}\coloneqq A \otimes_R R/(G\cdot \overline{x}_{i} : i\in I). \]
\end{definition}

\begin{example}[{\cite[Section 2]{beaudryhilllawsonshizeng2025slicequotientsnorms}}] \label{example: BPGJ}
    Let $G = C_{2^n}$. By \cite[Section 5]{HHR16} we can fix a choice of generators such that 
    \[ \pi^{C_2}_{*\rho}\BPG\cong\Z_{(2)}[G\cdot \overline{t}_1,G\cdot \overline{t}_2,\ldots]. \]
    Since \(\BPG\) is a summand of \(\MUGtwo\), we may view these elements as elements in \(\pi^{C_2}_{*\rho}\MUGtwo\). Now let $J \subseteq \Z_{\geq 1}$. Then, 
    \[ \tb{\BPG\langle J \rangle} \coloneqq \BPG/(G \cdot \ol{t}_j : j \not \in J) \simeq \BPG \otimes_{\MU^{( \! ( G ) \! )}_{(2)}} \MU^{( \! ( G ) \! )}_{(2)}/(G \cdot \ol{t}_j : j \not \in J) \]
    In particular, this recovers \tb{(forms of) higher truncated Brown--Peterson spectra} as
    \[ \tb{\BPGm} \coloneqq \BPG\langle \{1, \cdots, m \} \rangle \simeq \BPG/(G \cdot \ol{t}_{m+1}, G \cdot \ol{t}_{m+2}, \cdots). \]
\end{example}

\begin{lemma}\label{lem:basechange}
    Let \(R\) be a \(\E^{G}_\infty\)-ring and let \(\O^{\otimes} \) be a $G$-$\infty$-operad.
    Suppose that $M$ is an $R$-module through an \(\E_1 \otimes \O\)-algebra map \(R\to M\).
    If \(R/(G\cdot \overline{x}_{i} : i\in I)\) admits an \(\O\)-\(R\)-algebra structure,
    then \(M/(G\cdot \overline{x}_{i} : i\in I)\) admits an \(\O\)-\(M\)-algebra structure.
\end{lemma}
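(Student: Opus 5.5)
The plan is to realize $M/(G\cdot \overline{x}_i : i\in I) = M\otimes_R R/(G\cdot \overline{x}_i : i\in I)$ (\cref{definition: module twisted monoid quotient}) as the image of the $\O$-$R$-algebra $R/(G\cdot\overline{x}_i : i \in I)$ under an $\O$-monoidal base change functor
\[ M\otimes_R - \colon \ul{\LMod}_R \longrightarrow \ul{\LMod}_M. \]
Such a functor carries $\O$-algebras to $\O$-algebras, and the unit of $\ul{\LMod}_M$ is $M$, so the output is an $\O$-$M$-algebra. This is the same pattern as in \cref{lemma: enhance structure of quotients}.

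First, restrict the $\E_1\otimes\O$-algebra structure on $M$ and the map $R\to M$ along $\E_1\otimes\O$, so that $M$ is an $\O$-algebra object in $\E_1$-algebras. Since $R$ is $\E^G_\infty$, the category $\ul{\LMod}_R$ is $G$-symmetric monoidal; this is where we use that $R$ is commutative. We view $M$ as an $\E_1$-algebra in $\ul{\LMod}_R$ via the unit map $R\to M$, and this algebra carries compatible $\O$-structure. Then \cite[Corollary 3.3.9]{quinnZhu2026multiplicativeequivariantthomspectra}, as used in the proof of \cref{lemma: enhance structure of quotients}, gives two things: $\ul{\LMod}_M$ inherits an $\O$-monoidal structure, and the base change functor $M\otimes_R -$ is $\O$-monoidal. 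Applying this functor to $R/(G\cdot\overline{x}_i : i \in I)$ yields the claimed $\O$-$M$-algebra structure on $M/(G\cdot\overline{x}_i : i \in I)$. The underlying object is correct by definition.

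The main obstacle is this middle step: checking that the results of \cite{quinnZhu2026multiplicativeequivariantthomspectra} apply relative to a commutative base $R$ rather than over $\S$. Concretely, we need $\ul{\LMod}_M(\ul{\LMod}_R)\simeq\ul{\LMod}_M(\ul{\Sp}^G)$ together with compatibility of the $\O$-monoidal structures. If the parametrized statement is only available over the sphere, the fallback is to apply \cite[Corollary 3.3.9]{quinnZhu2026multiplicativeequivariantthomspectra} directly to the $\E_1\otimes\O$-algebra map $R\to M$ in $\ul{\Sp}^G$. This makes $M\otimes_R -\colon\ul{\LMod}_R\to\ul{\LMod}_M$ an $\O$-monoidal functor, which is exactly what is needed. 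Finally, we observe that an $\O$-$R$-algebra structure on $R/(G\cdot\overline{x}_i : i \in I)$ is the same as an $\O$-algebra in $\ul{\LMod}_R$.
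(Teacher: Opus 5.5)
Your proposal is correct and follows the paper's proof. The paper's argument is the single observation that base change $M\otimes_R-$ along the $\E_1\otimes\O$-algebra map $R\to M$ is $\O$-monoidal by \cite[Corollary 3.3.9]{quinnZhu2026multiplicativeequivariantthomspectra}, so it carries the $\O$-$R$-algebra $R/(G\cdot\overline{x}_i : i\in I)$ to an $\O$-$M$-algebra. Your ``fallback'' of applying that corollary directly to $R\to M$ is exactly what the paper does, so you can drop the detour of regarding $M$ as an $\E_1$-algebra in $\ul{\LMod}_R$; that step is not automatic from a ring map out of $R$ anyway.
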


\begin{proof}
    The base change along $R \to M$ is $\O$-monoidal by \cite[Corollary 3.3.9]{quinnZhu2026multiplicativeequivariantthomspectra}.
\end{proof}

Our techniques now yield:

\begin{corollary}
    Let \(G=C_{2^n}\) and $J \subseteq \Z_{\geq 1}$. Then, $\BPG\langle J \rangle$ admits a $\Coind_{C_2}^G \E_{\sigma}$-$\BPG$-algebra structure.
\end{corollary}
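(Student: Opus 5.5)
The plan is to deduce the statement from \cref{thm:MUG_quotients}, working in its $(2)$-local version, and then base change along $\MUGtwo \to \BPG$ using \cref{lem:basechange}. By \cref{example: BPGJ} we have
\[ \BPG\langle J \rangle \simeq \BPG \otimes_{\MUGtwo} \MUGtwo/(G\cdot \ol{t}_j : j \notin J). \]
This is exactly the base-changed twisted monoid quotient of \cref{definition: module twisted monoid quotient}, with ambient ring $\MUGtwo$ and $M = \BPG$.

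The first step applies the $(2)$-local version of \cref{thm:MUG_quotients} to the indexing set $I = \Z_{\geq 1}\setminus J$ and the classes $\ol{t}_j \in \pi^{C_2}_{j\rho}\MUGtwo$. This gives a $\Coind^G_{C_2}\E_\sigma$-$\MUGtwo$-algebra structure on $\MUGtwo/(G\cdot\ol{t}_j : j\notin J)$. The degrees $j\rho$ are of the form required by that theorem, since the $\ol{t}_j$ arise as $C_2$-classes in $\pi^{C_2}_{*\rho}$ of the norm.

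The second step invokes \cref{lem:basechange} with $\O = \Coind^G_{C_2}\E_\sigma$, with $R = \MUGtwo$ (which is $\E^G_\infty$ as a norm of the $\E^{C_2}_\infty$-ring $\MU_{\R(2)}$), and with $M = \BPG$. For this we need the structure map $\MUGtwo \to \BPG$ to refine to an $\E_1\otimes\Coind^G_{C_2}\E_\sigma$-algebra map. I would obtain this as follows.
\begin{itemize}
\item Start from the Quillen idempotent $\MU_{\R(2)} \to \BPR$. By \cite[Corollary 6.2.2]{quinnZhu2026multiplicativeequivariantthomspectra}, or by the lifting argument of \cref{prop:lift} applied to the strongly even target $\BPR$, it refines to an $\E_\rho$-map; this uses the known $\E_\rho$ (indeed $\E^{C_2}_\infty$-compatible) structure on $\BPR$ from that paper.
\item Apply $N^G_{C_2}$, which by \cite[Construction 2.2.5]{quinnZhu2026multiplicativeequivariantthomspectra} yields a $\Coind^G_{C_2}\E_\rho$-map $\MUGtwo \to \BPG$.
\item Precompose with the composite $\E_1\otimes\Coind^G_{C_2}\E_\sigma \to \Coind^G_{C_2}(\E_1\otimes\E_\sigma) \to \Coind^G_{C_2}\E_\rho$. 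The first map is constructed in the proof of \cref{lemma: enhance structure of twisted monoid quotients}, and the second is induced by the Dunn map.
\end{itemize}

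I expect the main obstacle to be the first item: securing an $\E_\rho$-ring structure on $\BPR$ together with an $\E_\rho$-refinement of $\MU_{\R(2)}\to\BPR$. If this is not available in the needed form, I would bypass it. Write $\BPG \simeq \MUGtwo/(G\cdot \ol{x}_i : i\neq 2^k-1)$, so that
\[ \BPG\langle J\rangle \simeq \MUGtwo/(G\cdot \ol{x}_i,\ G\cdot\ol{t}_j : i \ne 2^k-1,\ j\notin J), \]
after choosing polynomial generators of $\pi^{C_2}_{*\rho}\MUGtwo$ compatible with the $\ol{t}_j$, in the spirit of \cref{example: BPRJ} and \cite[Lemma 2.0.5]{hahnWilson2022redshift}. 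Then \cref{thm:MUG_quotients} applies directly and produces a $\Coind^G_{C_2}\E_\sigma$-$\MUGtwo$-algebra. Upgrading this to a $\BPG$-algebra structure would still require the base-change step above.
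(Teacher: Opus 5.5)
Your argument is essentially the paper's: the corollary is stated as an immediate consequence of the $(2)$-local version of \cref{thm:MUG_quotients} combined with \cref{lem:basechange}, applied along a $\Coind^G_{C_2}\E_\rho$-refinement of $\MUGtwo\to\BPG$, restricted to $\E_1\otimes\Coind^G_{C_2}\E_\sigma$ as in \cref{lemma: enhance structure of twisted monoid quotients}. Two small corrections: the $\ol t_j$ have degree $(2^j-1)\rho$, not $j\rho$, so apply the theorem with $I=\{2^j-1 : j\notin J\}$; and the structured Quillen idempotent must come from the structured retraction of \cite[Section 6.3]{quinnZhu2026multiplicativeequivariantthomspectra}, as the paper uses in \cref{corollary: factoring BPG to E} and \cref{thm:BHSZ_refinement}, rather than from \cref{prop:lift} or \cite[Corollary 6.2.2]{quinnZhu2026multiplicativeequivariantthomspectra}, which are stated for $\E^{C_2}_\infty$ targets---and $\BPR$ is not $\E^{C_2}_\infty$, since by Lawson its underlying $\BP$ is not even $\E_\infty$ at $p=2$.
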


This in particular provides a first multiplicative structure on the higher truncated Brown--Peterson spectra $\BPGm$.

\section{Orientations by twisted monoid quotients} 
\label{section: orientations by twisted monoid quotients}
\subsection{Factorization through twisted monoid quotients}
Having just studied the structure on twisted monoid quotients, we are motivated to study maps out of these objects. Our main objective is to orient Lubin--Tate theory by higher truncated Brown--Peterson spectra. Let us begin in a general setting, although we will quickly specialize to the examples of interest.

\begin{proposition} \label{prop: factorization}
    Let \(A\) and \(B\) be \(\E^G_\infty\)-ring spectra and \(f\colon A\to B\) be a \( \Coind_H^G \mathbb{E}_{1}\)-algebra map. 
    Fix \(H\leq G\), and let \(\{x_i\}_{i\in I}\) be a collection of elements in \(\pi^H_\star(A)\), i.e.,~maps \(x_i\colon \mathbb{S}^{V_i}\to \mathrm{Res}^G_H A\).
    Suppose that \(f(x_i)=0\) in \(\pi^H_\star(B)\) for all \(i\in I\).
    Then, there is a factorization
    \begin{center}
        \begin{tikzcd}
        A \ar[r, "f"]\ar[d] & B \\
        A/(G\cdot x_i : i\in I) \ar[ru, dashed]
    \end{tikzcd}
    \end{center}
    of $A$-modules, where $A \to A/(G\cdot x_i : i\in I)$ is the projection map and $B$ is an $A$-module through $f$.
\end{proposition}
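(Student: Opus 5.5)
The plan is to produce the factorization by base-changing the target $B$ itself along the twisted monoid ring, using only the structured map $f$ and the vanishing $f(x_i)=0$. Write $A[G\cdot x_i : i\in I] \coloneqq N_H^G(\Res^G_H A[x_i : i\in I])$, with its two maps to $A$ from \cref{construction:TMR2}: the augmentation $\varepsilon$, sending $x_i \mapsto 0$, and the evaluation $e$, sending $x_i\mapsto x_i$. By definition,
\[ A/(G\cdot x_i : i\in I) = A \otimes_{A[G\cdot x_i]} A , \]
where the left factor is a module via $e$ and the right factor via $\varepsilon$. The projection $A\to A/(G\cdot x_i)$ is $e$-base change applied to $\varepsilon$; concretely, it is the map $A \simeq A\otimes_{A[G\cdot x_i]} A[G\cdot x_i] \to A\otimes_{A[G\cdot x_i]} A$ induced by $\varepsilon$.

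\textbf{Step 1: two maps to $B$ agree.} Compose $e$ and $\varepsilon$ with $f$ to get two maps $A[G\cdot x_i]\to B$. I claim they are equivalent as $\Coind_H^G\E_1$-algebra maps (or at least as $\E_1$-maps, which is what the relative tensor product needs). Since $A[G\cdot x_i]$ is a norm of a free $\E_1$-$\Res^G_H A$-algebra, the norm--restriction adjunction for $\Coind_H^G\E_1$-algebras (\cite[Construction 2.2.5]{quinnZhu2026multiplicativeequivariantthomspectra}) and freeness identify such maps with tuples of $H$-equivariant classes in $\pi^H_{V_i}B$. The two tuples are $f(x_i)$ and $0$. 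These agree by hypothesis, so the two composites are homotopic. This is where $f$ being $\Coind_H^G\E_1$ rather than merely $\E_1$ is needed: it makes $f\circ e$ and $f\circ\varepsilon$ corepresented maps out of the norm. I expect this step to be the main obstacle. The issue is making the identification precise: checking that both composites arise from the adjunction in the same way, namely that $f\circ e$ is adjoint to the tuple $(f(x_i))_i$ and $f\circ\varepsilon$ is adjoint to $0$, and that the relevant mapping space is just $\prod_i \Omega^{\infty}\Map(S^{V_i}, \Res_H^G B)$ on components. Only $\pi_0$-level agreement is needed, so a homotopy suffices.

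\textbf{Step 2: the factorization.} View $B$ as a left $A[G\cdot x_i]$-module via $f\circ e\simeq f\circ\varepsilon$. The $A$-module map $f\colon A\to B$ is then $A[G\cdot x_i]$-linear for the $\varepsilon$-structure on $A$ (as $f\circ\varepsilon$ is the structure) and for the $e$-structure (as $f\circ e$ is the structure). Form
\[ A\otimes_{A[G\cdot x_i]} A \xrightarrow{\ \mathrm{id}\otimes f\ } A\otimes_{A[G\cdot x_i]} B \simeq A\otimes_{A[G\cdot x_i]} A \otimes_A B . \]
Using the equivalence $f\circ \varepsilon\simeq f\circ e$, the last term is $B$ restricted along $e$, base changed along $e$; it therefore has the counit to $B$, namely the action map $A\otimes_{A[G\cdot x_i]}B\to B$, using that $B$ is a left $A$-module and the $A[G\cdot x_i]$-action factors through $e$. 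The composite $A/(G\cdot x_i)\to B$ is an $A$-module map, where the $A$-module structure on the source comes from the left factor.

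\textbf{Step 3: check the triangle.} Precomposing with the projection $A \simeq A\otimes_{A[G\cdot x_i]}A[G\cdot x_i]\to A\otimes_{A[G\cdot x_i]}A$ gives $A\otimes_{A[G\cdot x_i]}A[G\cdot x_i]\to B$, which is the action on $f\circ\varepsilon\simeq f\circ e$, namely $a\mapsto f(a)\cdot 1$. This is $f$ by unitality of $f$. The check is routine once Step 1 is in place.
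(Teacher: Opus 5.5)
\textbf{There is a genuine gap in Step 1, and Step 2 depends on it.}

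The freeness you invoke is freeness \emph{relative to $A$}. The ring $A[x]=A\otimes\S[x]$ is the free $\E_1$-algebra in $\LMod_A$, so maps out of it are classified by classes in $\pi_V$ only when the target is an $\E_1$-algebra in $A$-modules. Your $B$ is not: $f$ is merely ($\Coind_H^G$-)$\E_1$, not central. For $H=G$, an $\E_1$-map $A\otimes\S[x]\to B$ restricting to $f$ is an $\E_1$-map $\S[x]\to Z(f)$ into the centralizer of $f$. The map $f\circ e$ corresponds to the image of $x$ in $\pi_V Z(f)$, which need not vanish even though its image $f(x)\in\pi_VB$ does.

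Concretely, left $A[x]$-modules are left $A$-modules $M$ with an $A$-linear map $\Sigma^VM\to M$. Then $e^*f^*B=(f^*B,\,x\cdot)$ and $\varepsilon^*f^*B=(f^*B,0)$, and $A\otimes_{A[x]}\varepsilon^*f^*B\simeq\mathrm{cofib}(x\colon\Sigma^Vf^*B\to f^*B)$. So your Step 2 (even just a map from this to $B$ restricting to the identity) needs $x$ to act $A$-\emph{linearly} nullhomotopically on all of $f^*B$. The hypothesis $f(x)=0$ only says that $x\cdot-$ dies on the unit $A\to f^*B$, which is strictly weaker.

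Here is a counterexample. Take $G=H=e$, $A=\S[t_1,t_2]=\Sigma^\infty_+\mathbb{N}^2$, $B=\S$. Let $f$ be the $\E_1$-map with $t_1,t_2\mapsto0$ and commutation homotopy $\eta\in\pi_1\S$; this exists because the centralizer of $0\colon\S[t_1]\to\S$ is $\S\oplus\Sigma^{-1}\S$. Then $f(t_1)=0$. However, the image of $t_1\cdot\mathrm{id}$ in $\pi_0\mathrm{End}_{\S[t_2]}(f^*\S)\cong\pi_0\S\oplus\pi_1\S$ is $(0,\eta)\neq0$. Hence $e^*f^*\S\not\simeq\varepsilon^*f^*\S$, and $f\circ e\not\simeq f\circ\varepsilon$ as $\E_1$-maps, although the proposition itself holds here.

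There is also a definitional slip. $N_H^G(\Res^G_HA[x_i])$ is an algebra over $N_H^G\Res^G_HA$, and for $I=\emptyset$ your recipe returns $A\otimes_{N_H^G\Res_H^GA}A$ rather than $A$. The intended ring is $A\otimes\S[G\cdot x_i]$ with $\S[G\cdot x_i]=N_H^G\S[x_i]$.

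\textbf{How to repair it.} The homotopy you want is true after restricting to $\S[G\cdot x_i]$. There $\S[x_i]$ is honestly free in $\Sp^H$, and the norm-compatibility of $f$ gives
\[ f\circ e_0\simeq c_B\circ N_H^G(f(x_i))\simeq c_B\circ N_H^G(0)= f\circ\varepsilon_0 \]
as $\E_1$-maps $\S[G\cdot x_i]\to B$, where $c_B$ is the counit $N_H^G\Res^G_HB\to B$. Since $A/(G\cdot x_i)\simeq A\otimes_{\S[G\cdot x_i]}\S$, an $A$-module map to $B$ is the same as an $\S[G\cdot x_i]$-module map $\S\to(fe_0)^*B$. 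By the homotopy above, $(fe_0)^*B$ is equivalent to $B$ with the action through the augmentation, so the unit of $B$ supplies such a map, and your Step 3 then goes through. This is a legitimate and fairly clean alternative to the paper's route.

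The paper instead applies $f$ to the \emph{left} tensor factor, getting $B\otimes_{A[G\cdot x_i]}A\simeq B\otimes_{\S[G\cdot x_i]}\S$, in which $f(x_i)$ acts by \emph{right} multiplication. Using the norm-compatibility of $f$, this is rewritten as $B\otimes_{N_H^G\Res_H^GB}N_H^G\bigl(\Res_H^GB/(f(x_i))\bigr)$. Right multiplication by $f(x_i)=0$ is left $\Res^G_HB$-linearly null, so $\Res^G_HB/(f(x_i))$ splits, and projecting to the first summand yields a retraction of $B\to B/(G\cdot f(x_i))$.
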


\begin{proof}
    Composing the augmentation with $f$ yields a map
    \begin{center}
        \begin{tikzcd}
            {A[G \cdot x_i : i \in I]} \arrow[r] & A \arrow[r, "f"] & B
        \end{tikzcd}
    \end{center}
    of $\E_1$-algebras. Functoriality of the bar construction yields a commutative diagram
    \begin{center}
    \begin{tikzcd}
        A \ar[r, "f"]\ar[d] & B\ar[d] \\
        A\otimes_{A [G\cdot x_i:i \in I]} A\ \ar[r, "f", swap] & B\otimes_{A [G\cdot x_i:i \in I]} A.
    \end{tikzcd}
    \end{center}
    By definition (\cref{construction:TMR2}, \cref{definition: module twisted monoid quotient}) we have 
    \[ A/(G\cdot x_i : i\in I) = A\otimes_{A [G\cdot x_i:i \in I]} A \quad \text{and} \quad B/(G\cdot f(x_i):i\in I) = B\otimes_{A [G\cdot x_i:i \in I]} A \simeq B \otimes_{\S[G \cdot x_i: i \in I]} \S. \]
    To finish the proof, it remains to show that there is a map \(B/(G\cdot f(x_i):i\in I)\to B\) that is a retract of the projection map $B \to B/(G\cdot f(x_i):i\in I)$.

    By assumption, \(f(x_i)=0\) in \(\pi^H_\star(B)\) for all \(i\in I\). First, we consider the diagram
    \begin{center}
        \begin{tikzcd}
            {\S[G \cdot x_i: i \in I]} \arrow[r] & N_H^G \Res_H^G A \arrow[r, "f"] \arrow[d] & N_H^G \Res_H^G B \arrow[d]
            \\ & A \arrow[r, "f", swap] & B
        \end{tikzcd}
    \end{center}
    where the top left map is determined by $x_i \in \pi_{\star}^H(A)$. It commutes since $f$ is a $\Coind_H^G \E_{1}$-algebra map.\footnote{\label{footnote: Coind}Using \cite[Corollary 1.39]{stewart2025tensorproductsequivariantcommutative} one can compute the structure spaces of a coinduced operad in terms of a limit along structure spaces of the original operad. In particular, the structure space $(\Coind_H^G \E_1)(G/H)$ is the one giving rise to an $H \to G$ norm multiplication. See \cite[Introduction]{stewart2025equivariantoperadssymmetricsequences} for more information. The limit is indexed over a diagram with an initial object, so one manages to compute $(\Coind_{H}^G \E_1)(G/H) \simeq \E_1(\Res_H^G G/H)$. This is non-empty. Thus, the $\Coind_H^G \E_1$-algebra structure yields compatibility with norm multiplications.} So
    \[ B/(G\cdot f(x_i) : i\in I) \simeq B \otimes_{\S[G\cdot x_i : i\in I]}\S \simeq B \otimes_{N^G_H \mathrm{Res}^G_H B} {N^G_H \mathrm{Res}^G_H B}\otimes_{\S[G\cdot x_i : i\in I]}\S. \]
    Using that $N_H^G$ is symmetric monoidal and sifted colimit-preserving, we furthermore obtain
    \begin{align*}
        B/(G\cdot f(x_i) : i\in I) &\simeq B \otimes_{N^G_H \mathrm{Res}^G_H B} {N^G_H \mathrm{Res}^G_H B}\otimes_{\S[G\cdot x_i : i\in I]}\S 
        \\ &\simeq B \otimes_{N^G_H \mathrm{Res}^G_H B} N^G_H\left(\mathrm{Res}^G_H B/( f(x_i) : i\in I)\right).
    \end{align*}
    We now use the equivalent description of quotients through iterated cofibers, see \cref{definition: quotient}. Since $f(x_i) = 0$, these cofibers split as left $\Res_H^G B$-modules, and we find
    \[ \Res_H^G B/(f(x_i) : i \in I) \simeq \Res_H^G B \oplus \bigoplus_{W} \Sigma^{W} \Res_H^G B. \]
    as $\Res_H^G B$-modules for certain $H$-representations $W$. Projecting to the first summand, we have constructed a map
    \[ B/(G\cdot f(x_i) : i\in I) \longrightarrow B \otimes_{N_H^G \Res_H^G B} N_H^G \Res_H^G B \simeq B. \]
    of $B$-modules. By construction, this is a retract of the projection $B \to B/(G \cdot f(x_i) : i \in I)$.
\end{proof}

\begin{corollary}
    \label{corollary: factoring BPG to E}
    Let $E$ be an $\E_{\infty}^G$-ring spectrum and $f \colon \BPG \to E$ be a $\Coind_{C_2}^G \E_{1}$-algebra map. Let $J \subseteq \Z_{\geq 1}$ and suppose that $\ol{t}_j \in \pi_{*\rho}^{C_2} \BPG$ is sent to $0$ along $f$ for $j \not \in J$. Then, there is a factorization
    \begin{center}
        \begin{tikzcd}
            \BPG \arrow[r, "f"] \arrow[d] & E
            \\ \BPG \langle J \rangle \arrow[ur, dashed, bend right]
        \end{tikzcd}
    \end{center}
    of $\BPG$-modules, where $E$ is a $\BPG$-module through $f$.
\end{corollary}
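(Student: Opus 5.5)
This is a direct specialization of \cref{prop: factorization}, taking $A = \BPG$, $B = E$, $H = C_2$, and $x_j = \ol{t}_j$ for $j \notin J$.

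\emph{Step 1: the hypotheses of \cref{prop: factorization}.}
\begin{itemize}
\item $\BPG = N_{C_2}^G \BPR$ is an $\E_\infty^G$-ring spectrum, since $\BPR$ is an $\E_\infty^{C_2}$-ring (working $2$-locally as in the remarks above) and the norm preserves $\E_\infty$-structures.
\item $E$ is an $\E_\infty^G$-ring by assumption.
\item $f$ is a $\Coind_{C_2}^G\E_1$-algebra map by assumption.
\item The classes $\ol{t}_j \in \pi^{C_2}_{j\rho}\BPG$ for $j\notin J$ are maps $\S^{j\rho}\to \Res^G_{C_2}\BPG$, and $f(\ol{t}_j)=0$ by assumption.
\end{itemize}
\cref{prop: factorization} then gives a factorization of $f$ through the projection $\BPG \to \BPG/(G\cdot \ol{t}_j : j\notin J)$, as a map of $\BPG$-modules.

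\emph{Step 2: identify the target with $\BPG\langle J\rangle$.} \cref{example: BPGJ} defines $\BPG\langle J\rangle$ as $\BPG/(G\cdot\ol{t}_j : j\notin J)$. That example presents it via base change from $\MU^{(\!(G)\!)}_{(2)}$, whereas \cref{prop: factorization} produces the twisted monoid quotient $\BPG\otimes_{\BPG[G\cdot \ol{t}_j]}\BPG$ formed directly over $\BPG$. So I need these two constructions to agree, together with their projection maps.

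The comparison goes as follows. Base change is symmetric monoidal and preserves relative tensor products, and the polynomial algebra $\BPG[G\cdot\ol{t}_j]$ is $\BPG\otimes_{\MUGtwo}\MUGtwo[G\cdot\ol{t}_j]$. Hence
\[ \BPG \otimes_{\MUGtwo} \left(\MUGtwo \otimes_{\MUGtwo[G\cdot \ol{t}_j]} \MUGtwo\right) \simeq \BPG\otimes_{\BPG[G\cdot\ol{t}_j]}\BPG, \]
compatibly with the augmentations and with the maps sending $\ol{t}_j \mapsto \ol{t}_j$. In particular the projection maps are identified as well.

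The main obstacle, if any, is this bookkeeping in Step 2: checking that the two descriptions of $\BPG\langle J\rangle$ and their projection maps match. Everything else is immediate from \cref{prop: factorization}.
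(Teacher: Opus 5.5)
\textbf{Gap in Step 1.} The claim that $\BPG$ is an $\E_\infty^G$-ring spectrum is false, and so is your justification. The underlying spectrum of $\BPR$ is the $2$-primary $\BP$, which by Lawson's theorem admits no $\E_n$-ring structure for any $n\geq 12$. Hence $\BPR$ is not $\E_\infty^{C_2}$; for $G=C_2$ this already shows that $\BPG=\BPR$ is not $\E_\infty^G$. In general, no $\E_\infty^G$-structure on $\BPG$ is available.

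Consequently, \cref{prop: factorization} cannot be applied with $A=\BPG$. Its proof needs $A$ to be $\E_\infty^G$ in two places: to form $A[G\cdot x_i]$ as a norm, and to define the augmentation through the norm multiplication $N^G_H\Res^G_H A\to A$. For the same reason, the object $\BPG\otimes_{\BPG[G\cdot\ol{t}_j]}\BPG$ that you compare against in Step 2 is not defined in this framework. \cref{construction:TMR2} is only set up for norms of $\E_\infty^{C_2}$-rings. That is exactly why \cref{example: BPGJ} defines $\BPG\langle J\rangle$, via \cref{definition: module twisted monoid quotient}, only as the base change $\BPG\otimes_{\MUGtwo}\MUGtwo/(G\cdot\ol{t}_j : j\notin J)$.

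\textbf{Missing idea.} Apply \cref{prop: factorization} over $\MUGtwo$ and transport the result along a structured retraction.
\begin{enumerate}[(1)]
\item Fix a $\Coind_{C_2}^G\E_1$-algebra retraction $r\colon \MUGtwo\to\BPG$. This is a genuine input; the paper takes it from \cite[Theorem 6.3.1]{quinnZhu2026multiplicativeequivariantthomspectra}.
\item Then $f\circ r$ is a $\Coind_{C_2}^G\E_1$-map out of the $\E_\infty^G$-ring $\MUGtwo$. It kills the classes $\ol{t}_j$ for $j\notin J$, regarded in $\MUGtwo$ via the summand inclusion that $r$ retracts.
\item So \cref{prop: factorization} gives an $\MUGtwo$-module map $\MUGtwo/(G\cdot \ol{t}_j : j\notin J)\to r^*E$ extending $f\circ r$.
\item The base change--restriction adjunction along $r$ identifies the restriction map $\Map_{\LMod_{\MUGtwo}}(\MUGtwo/(G\cdot\ol{t}_j : j\notin J), r^*E)\to \Map_{\LMod_{\MUGtwo}}(\MUGtwo, r^*E)$ with $\Map_{\LMod_{\BPG}}(\BPG\langle J\rangle, E)\to\Map_{\LMod_{\BPG}}(\BPG, E)$. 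Under this identification, $f\circ r$ corresponds to $f$.
\item Since the class of $f\circ r$ lies in the image of the first map on $\pi_0$, the class of $f$ lies in the image of the second. A preimage is the desired factorization.
\end{enumerate}
Your observation that $\BPG\langle J\rangle$ is a base change from $\MUGtwo$ is the right ingredient. It has to be used through this adjunction, not to identify $\BPG\langle J\rangle$ with a quotient formed directly over $\BPG$.
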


\begin{proof}
    Fix a $\Coind_{C_2}^G \E_1$-retract $r \colon \MUG \to \BPG$; such maps exist by \cite[Theorem 6.3.1]{quinnZhu2026multiplicativeequivariantthomspectra}. Consider the commutative diagram
    \begin{center}
        \begin{tikzcd}
        \Map_{\LMod_{\MUG}}\left(\MUG/(G \cdot t_j : j \not \in J), r^* E \right) \arrow[r] \arrow[d, "\simeq", no head, swap] & \Map_{\LMod_{\MUG}}\left(\MUG, r^* E \right) \arrow[d, "\simeq", no head]
        \\ \Map_{\LMod_{\BPG}} \left(\BPG\langle J \rangle, E \right) \arrow[r] & \Map_{\LMod_{\BPG}}\left(\BPG, E \right)
        \end{tikzcd}
    \end{center}
    induced by the base change/restriction adjunction. The top arrow is surjective on $\pi_0$ by the previous proposition (\cref{prop: factorization}). Thus, so is the bottom arrow. It yields precisely a factorization as indicated.
\end{proof}

One of the main interests in the twisted monoid construction is the relation to Lubin--Tate theories and higher real \(K\)-theories.

Building on work of Hahn--Shi \cite{hahnRealOrientationsLubin2020}, 
Beaudry--Hill--Shi--Zeng construct \(G\)-equivariant homotopy ring maps from \(\BPG\) to certain explicit forms of Lubin--Tate theories \(E(k,\Gamma_h)\), see \cite{beaudryHillShiZeng2021modelsLubinTate}.
In this section, we refine this to produce \(G\)-equivariant maps \(\BPGm\to E(k,\Gamma_h)\). Since our work relies heavily on that of Beaudry--Hill--Shi--Zeng, first we will recall some essential parts of their work.

\begin{recollection}[{\cite{beaudryHillShiZeng2021modelsLubinTate}}] \label{recollection: bhsz}
    Let \(G=C_{2^n}\) and fix a height \(h=2^{n-1}m\).
    Let \(E(k,\Gamma_h)\) be any of the forms of Lubin--Tate theory constructed in \cite[Theorem 1.5]{beaudryHillShiZeng2021modelsLubinTate}.
    Beaudry--Hill--Shi--Zeng construct \(G\)-equivariant maps
    \[\BPG\longrightarrow E(k,\Gamma_h)\longrightarrow E(k,\Gamma_h)^{hC(k,m)} \quad \text{with} \quad C(k,m) \coloneqq \operatorname{Gal}(k/\F_2) \ltimes k^\times [q] \subseteq \mathbb{G}(k, \Gamma_h),
    \]
    where $k^{\times}[q] \subseteq k^{\times}$ is the subgroup of $q = (2^m-1)$-torsion elements, see \cite[Remark 6.1]{beaudryHillShiZeng2021modelsLubinTate}.
    They construct a class \(D\in \pi^{G}_{*\rho_G}(\MUG)\),\footnote{The precise definition won't be relevant to us, the desiderata for it are explained in \cite[Section 6]{beaudryHillShiZeng2021modelsLubinTate}. Nonetheless, let us briefly recall the construction. Consider those elements $x \in \pi_{*\rho}^{C_2} \BPG$ that become invertible under $\pi_{*\rho}^{C_2}\BPG \to \pi_{*\rho}^{C_2} E_h$. These can be viewed as elements in $\pi_{*\rho}^{C_2} \MUG$ under the inclusion $\BPG \to \MUG$ induced by the Real Quillen idempotent. Then, we define $D \coloneqq \prod_x N_{C_2}^G x \in \pi_{*\rho_G}^G \MUG$.} and show that there is a
    \(G\)-equivariant factorization
    \[
    \begin{tikzcd}
        \BPG \ar[r]\ar[d] & E(k,\Gamma_h) \ar[r] & E(k,\Gamma_h)^{hC(k,m)} \\
        D^{-1}\BPG \ar[ru, dashed]
    \end{tikzcd}
    \]
    see \cite[Theorem 1.8]{beaudryHillShiZeng2021modelsLubinTate}, where $E(k,\Gamma_h) \to E(k,\Gamma_h)^{hC(k,m)}$ is a $G$-equivariant map splitting the natural map $E(k, \Gamma_h)^{hC(k,m)} \to E(k,\Gamma_h)$, see \cite[Theorem 5.3]{beaudryHillShiZeng2021modelsLubinTate}.
    Furthermore, they show that there are non-equivariant factorizations
    \[
    \begin{tikzcd}
        \Res^G_{e}\BPG \ar[r]\ar[d] & \Res^G_{e}E(k,\Gamma_h) \ar[r] & \Res^G_{e}E(k,\Gamma_h)^{hC(k,m)} \\
        \Res^G_{e}\BPGm \ar[ru, dashed]
    \end{tikzcd}
    \]
    and similarly
    \[
    \begin{tikzcd}
        \Res^G_{e}D^{-1}\BPG \ar[r]\ar[d] & \Res^G_{e}E(k,\Gamma_h) \ar[r] & \Res^G_{e}E(k,\Gamma_h)^{hC(k,m)} \\
        \Res^G_{e}D^{-1}\BPGm \ar[ru, dashed]
    \end{tikzcd}
    \]
    see \cite[Proposition 7.6]{beaudryHillShiZeng2021modelsLubinTate}. Additionally, they show that there is a non-equivariant \(K(h)\)-local equivalence 
    \[
    L_{K(h)}\Res^G_{e}D^{-1}\BPGm \xrightarrow{\ \simeq \ } \Res^G_{e}E(k,\Gamma_h)^{hC(k,m)}
    \]
    see \cite[Proposition 7.6]{beaudryHillShiZeng2021modelsLubinTate}.
\end{recollection}

Using the orientations produced in \cite{quinnZhu2026multiplicativeequivariantthomspectra}, we upgrade Beaudry--Hill--Shi--Zeng's factorizations to an equivariant factorization. Furthermore, we use this to obtain a model of higher real $K$-theories.

\begin{theorem}\label{thm:BHSZ_refinement}
    The Beaudry--Hill--Shi--Zeng orientation \(\BPG \to E(k,\Gamma_h)\) refines to a \(\Coind^G_{C_2}\E_\rho\)-ring map.
    Furthermore, there are \(G\)-equivariant factorizations
    \[
    \begin{tikzcd}
        \BPG \ar[r]\ar[d] & E(k,\Gamma_h)
        &
        \raisebox{-0.9em}{\text{and}}
        &
        D^{-1}\BPG \ar[r]\ar[d] & E(k,\Gamma_h) \\
        \BPGm \ar[ru, dashed]
        &&
        &
        D^{-1}\BPGm \ar[ru, dashed]
    \end{tikzcd}
    \]
    of $\BPG$-modules.
\end{theorem}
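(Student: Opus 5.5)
The plan has two parts: first upgrade the Beaudry--Hill--Shi--Zeng orientation to a $\Coind^G_{C_2}\E_\rho$-ring map, then feed this into \cref{corollary: factoring BPG to E}. For the first part, I would use the structured Real orientations of \cite{quinnZhu2026multiplicativeequivariantthomspectra}. The Hahn--Shi Real orientation of $\Res^G_{C_2}E(k,\Gamma_h)$ is a homotopy ring map $\MUR \to \Res^G_{C_2}E(k,\Gamma_h)$. Since $\Res^G_{C_2}E(k,\Gamma_h)$ is an $\E_\infty^{C_2}$-ring whose $C_2$-homotopy is concentrated appropriately (strongly even after the Hahn--Shi analysis), the lifting results of \cite{quinnZhu2026multiplicativeequivariantthomspectra}, in the style of \cref{prop:lift}, should refine it to an $\E_\rho$-map $\MUR \to \Res^G_{C_2}E(k,\Gamma_h)$. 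Norming up along $C_2 \leq G$ gives a $\Coind^G_{C_2}\E_\rho$-map $\MUG = N^G_{C_2}\MUR \to N^G_{C_2}\Res^G_{C_2}E(k,\Gamma_h)$. Composing with the counit $N^G_{C_2}\Res^G_{C_2}E \to E$, which is an $\E^G_\infty$-map, gives a $\Coind^G_{C_2}\E_\rho$-map $\MUG \to E(k,\Gamma_h)$. This is precisely how Beaudry--Hill--Shi--Zeng build their map, so it agrees in the homotopy category with theirs. Precomposing with a structured inclusion $\BPG \to \MUG$ (or using the $\Coind^G_{C_2}\E_1$-retract from \cite[Theorem 6.3.1]{quinnZhu2026multiplicativeequivariantthomspectra}, together with the fact that BHSZ's map from $\BPG$ factors through the Real Quillen idempotent) yields the refinement. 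I would have to check that the structure survives this step, possibly working with $\MUG$ throughout instead.

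For the factorizations, I would apply \cref{corollary: factoring BPG to E} with $J = \{1,\dots,m\}$. This requires $f(\ol t_j) = 0$ in $\pi^{C_2}_{j\rho}E(k,\Gamma_h)$ for all $j > m$. I would verify this by citing BHSZ: their formal group law computation shows that the images of $\ol t_j$ for $j>m$ vanish. That vanishing is exactly what produces their non-equivariant factorization through $\Res^G_e \BPGm$ in \cite[Proposition 7.6]{beaudryHillShiZeng2021modelsLubinTate}. Since $E(k,\Gamma_h)$ is strongly even, $\pi^{C_2}_{j\rho}E \to \pi^e_{2j}E$ is injective. Hence vanishing on underlying homotopy already gives equivariant vanishing, and the first triangle follows.

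For the $D$-inverted triangle, I would argue the same way using the factorization $D^{-1}\BPG \to E(k,\Gamma_h)$ from \cite[Theorem 1.8]{beaudryHillShiZeng2021modelsLubinTate}. One option is to apply \cref{prop: factorization} to the localized ring $D^{-1}\BPG$; this needs a $\Coind^G_{C_2}\E_1$-structure on the localized map, which follows because localization at a norm class is smashing and multiplicative. The other option is simpler: tensor the first factorization with $D^{-1}\BPG$ over $\BPG$. This gives $D^{-1}\BPGm \simeq D^{-1}\BPG\otimes_{\BPG}\BPGm \to D^{-1}\BPG \otimes_{\BPG} E \simeq E$, where the last equivalence uses that $D$ acts invertibly on $E$. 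I expect the main obstacle to be the first step: producing the $\E_\rho$-refinement of the Real orientation of $E(k,\Gamma_h)$ and checking that it really recovers the BHSZ map, including its $G$-action coming from the Morava stabilizer group.
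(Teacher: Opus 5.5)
Your proposal follows the paper's proof essentially step for step. The ingredients are the same. The $\Coind^G_{C_2}\E_\rho$-refinement comes from the lifting results of \cite{quinnZhu2026multiplicativeequivariantthomspectra}; the paper cites its Corollary~6.1.6, since \cref{prop:lift} itself does not apply, as $\pi_{2i}E(k,\Gamma_h)$ is not finitely generated. This is combined with a structured map from $\BPR$ into $\MUR$ and the norm. Then the Beaudry--Hill--Shi--Zeng orientation kills $\overline{t}_j$ for $j>m$, \cref{corollary: factoring BPG to E} gives the first triangle, and inverting $D$, which becomes invertible in $E(k,\Gamma_h)$, gives the second.

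The structured inclusion you posit but leave unsourced is what the paper takes from the structured retraction of $\BPR$ in \cite[Theorem 6.3.3]{quinnZhu2026multiplicativeequivariantthomspectra}, and it is genuinely needed. Your fallback via the $\Coind^G_{C_2}\E_1$-retraction $\MUG\to\BPG$ cannot replace it, because structure on a map out of $\MUG$ gives no structure on a map out of $\BPG$.
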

\begin{proof}
    The refinement of the Beaudry--Hill--Shi--Zeng orientation \(\BPG\to E(k,\Gamma_h)\) to a \(\Coind^G_{C_2}\E_\rho\)-ring map follows from the lifting theorem \cite[Corollary 6.1.6]{quinnZhu2026multiplicativeequivariantthomspectra} combined with the structured retraction of $\BPR$ constructed in \cite[Theorem 6.3.3]{quinnZhu2026multiplicativeequivariantthomspectra}. 
    The Beaudry--Hill--Shi--Zeng orientation is constructed so that the elements \(\overline{t}_{m+1},\overline{t}_{m+2},\overline{t}_{m+3},\ldots\in \pi^{C_2}_*(\BPG)\), that are used in \cref{example: BPGJ} to define \(\BPGm\), all map to zero in \(E(k,\Gamma_h)\).

    Therefore, \cref{corollary: factoring BPG to E} yields the factorization on the left. Since $D$ is sent to an invertible element in $E(k,\Gamma_h)$, applying $D^{-1}$ yields the factorization on the right. 
\end{proof}

\begin{remark}
    Under the more restrictive assumption that the Beaudry--Hill--Shi--Zeng orientation can be constructed starting from an \(\E_2\)-ring map \(\MUR \to \Res^{G}_{C_2}E(k,\Gamma_h)\), an alternative approach for producing such factorizations was given in \cite[Proposition 3.2.6]{delangel2024dualshigherrealktheories} suggested to Del Angel by Hahn. Even though we can produce some $\E_{2\rho}$-algebra maps $\MUR \to \Res_{C_2}^G E(k, \Gamma_h)$ as in \cite[Example 2.21]{quinnZhu20206realsnaith}, we emphasize that it is a non-trivial task to lift the specific orientations considered by Beaudry--Hill--Shi--Zeng. In particular, we are not able to produce $\E_{2\rho}$-lifts of these orientations. These difficulties are related to lifting the Quillen idempotent to an $\E_4$-algebra map, which still stands as an open problem.
\end{remark}

\subsection{Models of higher real $K$-theories}

Recall that we are considering certain examples $E(k, \Gamma_h)$ of Lubin--Tate theories considered by Beaudry--Hill--Shi--Zeng. Our equivariant map (\cref{thm:BHSZ_refinement}) allows us to refine Beaudry--Hill--Shi--Zeng's underlying equivalence to a \(G\)-equivariant equivalence. Let us denote by $\ul{\map}_G(-,-)$ the mapping $G$-spectrum. 

\begin{corollary}\label{cor:BPGmEh_equivalence}
    Let $G = C_{2^n}$ and $h = 2^{n-1}m$. Suppose that $k^{\times}$ contains all $q = (2^m-1)$-roots of unity. Then, there is an equivalence
    \[ \ul{\map}_G \left(\Sigma_+^{\infty} \mathrm{EG}, L_{\Infl_e^G K(h)} D^{-1}\BPGm \right) \xrightarrow{\ \simeq \ } E(k,\Gamma_h)^{hC(k,m)} \]
    of $\BPG$-modules.
\end{corollary}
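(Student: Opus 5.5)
We first build the comparison map and then check that it is an equivalence.

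\emph{Constructing the map.} By \cref{thm:BHSZ_refinement} there is a $G$-equivariant map of $\BPG$-modules $D^{-1}\BPGm \to E(k,\Gamma_h)$. Compose it with the $G$-equivariant splitting $E(k,\Gamma_h)\to E(k,\Gamma_h)^{hC(k,m)}$ from \cref{recollection: bhsz}. The target $E(k,\Gamma_h)^{hC(k,m)}$ is $K(h)$-local on underlying, so it is $\Infl_e^G K(h)$-local as a $G$-spectrum. We will also check that it is Borel complete, i.e.\ that $E(k,\Gamma_h)^{hC(k,m)}\simeq \ul{\map}_G(\Sigma^\infty_+\mathrm{EG}, E(k,\Gamma_h)^{hC(k,m)})$. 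Since the $G$-action on $E(k,\Gamma_h)$ is through the Morava stabilizer group and BHSZ regard it as a Borel/cofree $G$-spectrum, we will verify this from their construction. Granting both facts, the composite factors uniquely through $L_{\Infl_e^G K(h)}D^{-1}\BPGm$ and then through its Borel completion. This produces a $\BPG$-module map
\[ \ul{\map}_G\left(\Sigma^\infty_+\mathrm{EG}, L_{\Infl_e^G K(h)}D^{-1}\BPGm\right)\longrightarrow E(k,\Gamma_h)^{hC(k,m)}. \]

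\emph{Reducing to underlying.} A map between Borel complete $G$-spectra is an equivalence as soon as it is an equivalence on underlying spectra, since the $H$-fixed points are homotopy fixed points for every $H\le G$. The source is Borel complete by construction, and the target is Borel complete by the check above. On underlying spectra, restriction commutes with inflated localization, giving $\Res^G_e L_{\Infl_e^G K(h)} X\simeq L_{K(h)}\Res^G_e X$, and the Borel completion does nothing. The underlying map is therefore
\[ L_{K(h)}\Res^G_e D^{-1}\BPGm \longrightarrow \Res^G_e E(k,\Gamma_h)^{hC(k,m)}. \]
By \cref{recollection: bhsz}, \cite[Proposition 7.6]{beaudryHillShiZeng2021modelsLubinTate} shows this map is an equivalence, provided it agrees with theirs.

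\emph{Main obstacle.} The expected difficulty is showing that the restriction of our equivariant factorization agrees with the non-equivariant factorization BHSZ use. Our factorization comes from the retraction in \cref{prop: factorization}, and such factorizations are not unique in general. The plan is to avoid needing uniqueness by checking the equivalence directly. Both sides are $K(h)$-local and even-periodic-type. The underlying map of our factorization sends the images of $\ol{t}_1,\dots,\ol{t}_m$ and their conjugates to the same classes as the BHSZ map, because it factors the fixed map $\BPG\to E(k,\Gamma_h)$. So it suffices to show the map is an isomorphism on $\pi_*(-)/(2,v_1,\dots,v_{h-1})$. There, BHSZ's computation gives $\F_2[t^{\pm1}]$ on the target, and the source is generated by the images of the $\BPG$-classes, which surject onto the target. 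If this argument fails, the fallback is to compare the two retractions as maps of $\BPG$-modules out of a module built by cells from $\BPG$. We would try to show that their difference lands in odd or vanishing homotopy of $E(k,\Gamma_h)$ after inverting $D$, using evenness of $D^{-1}\BPGm$.
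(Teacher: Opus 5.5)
Your argument is the paper's proof. You apply $\ul{\map}_G(\Sigma^\infty_+\mathrm{EG}, L_{\Infl_e^G K(h)}(-))$ to the map from \cref{thm:BHSZ_refinement} composed with the splitting, and use that $E(k,\Gamma_h)^{hC(k,m)}$ is Borel with $K(h)$-local underlying spectrum; note it is Borelness that makes it $\Infl_e^G K(h)$-local, not underlying locality alone. You then reduce to underlying spectra via \cite[Proposition 3.2(2)]{carrickSmashingLocalizationsEquivariant2022} and invoke \cite[Proposition 7.6]{beaudryHillShiZeng2021modelsLubinTate}.

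The paper never checks whether the underlying factorization agrees with Beaudry--Hill--Shi--Zeng's. Your own observation settles this more simply than the mod-$I_h$ count, which as written only gives surjectivity and would also need \cite[Proposition 7.1]{beaudryHillShiZeng2021modelsLubinTate} for the source: since $\pi^e_*D^{-1}\BPG\to\pi^e_*D^{-1}\BPGm$ is surjective, every $\BPG$-module factorization induces the same map on homotopy, and hence also after $K(h)$-localization by linearity and $I_h$-adic continuity, so it is an equivalence because theirs is.
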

\begin{proof}
    Note that \(E(k,\Gamma_h)^{hC(k,m)}\) is Borel, and the underlying non-equivariant spectrum is \(K(h)\)-local.
    We write $\ul{\map}_G(-,-)$ for the mapping $G$-spectrum. Applying $\ul{\map}_G \left(\Sigma_+^{\infty} \mathrm{EG}, L_{\Infl_e^G K(h)}- \right)$ to the map from \cref{thm:BHSZ_refinement} thus yields a map
    \[
    \ul{\map}_G \left(\Sigma_+^{\infty} \mathrm{EG}, L_{\Infl_e^G K(h)} D^{-1}\BPGm \right) \longrightarrow E(k,\Gamma_h)^{hC(k,m)}.
    \]
    of $G$-spectra. By construction, this is a map between Borel 
    spectra. In particular, it is an equivalence if and only if it is an equivalence on underlying non-equivariant spectra.
    The underlying non-equivariant equivalence is \cite[Proposition 7.6]{beaudryHillShiZeng2021modelsLubinTate}. Here, we use \cite[Proposition 3.2(2)]{carrickSmashingLocalizationsEquivariant2022} to see that $L_{\Infl_e^G K(h)}$ becomes $L_{K(h)}$ on underlying.
\end{proof}

\begin{remark}
    An alternative viewpoint on the functor we applied to $D^{-1} \BPGm$ above is through an equivariant chromatic localization, namely
    \[  L_{G_+\otimes K(h)}D^{-1}\BPGm \simeq \ul{\map}_G \left(\Sigma_+^{\infty} \mathrm{EG}, L_{\Infl_e^G K(h)} D^{-1}\BPGm \right) \]
    according to \cite[Proposition 3.21]{carrickSmashingLocalizationsEquivariant2022}.
\end{remark}

Using the previous result, we can now deduce a model for higher real $K$-theories.

\begin{corollary}\label{cor:fixed_points_EOn_equivalence}
    Let $G = C_{2^n}$ and $h = 2^{n-1}m$. Suppose that $k^{\times}$ contains all $q = (2^m-1)$-roots of unity. For any \(H\leq G\), there is an equivalence
    \[  
    \EO_h(C(k,m) \times H) \coloneqq \left(E(k,\Gamma_h)^{hC(k,m)}\right)^{hH} \simeq
    \left(L_{K(h)}D^{-1}\BPGm\right)^{hH}  \] 
    of spectra.
\end{corollary}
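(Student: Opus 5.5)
We take $H$-homotopy fixed points of the equivalence in \cref{cor:BPGmEh_equivalence}. Write $X \coloneqq \ul{\map}_G(\Sigma^\infty_+ \mathrm{E}G, L_{\Infl_e^G K(h)} D^{-1}\BPGm)$. That corollary gives a $G$-equivalence $X \simeq E(k,\Gamma_h)^{hC(k,m)}$. Applying genuine $H$-fixed points of the restriction to $H$ yields an equivalence of spectra $X^H \simeq (E(k,\Gamma_h)^{hC(k,m)})^H$.

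The next step is to identify both sides with homotopy fixed points. Both $G$-spectra are Borel, as noted in the proof of \cref{cor:BPGmEh_equivalence}. For a Borel $G$-spectrum $Y$, the genuine $H$-fixed points are $(Y^e)^{hH}$. On the right, this gives $(E(k,\Gamma_h)^{hC(k,m)})^{hH}$, which is by definition $\EO_h(C(k,m)\times H)$. One should remark that the residual $G$-action on $E(k,\Gamma_h)^{hC(k,m)}$ comes from the Beaudry--Hill--Shi--Zeng action, which commutes with $C(k,m)$ inside the extended stabilizer group. This is why the group is a product and the iterated fixed points are well-defined (\cref{recollection: bhsz}). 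On the left, $X^H \simeq \Map(\mathrm{E}G_+, L_{\Infl K(h)}D^{-1}\BPGm)^H \simeq (\Res^G_e L_{\Infl K(h)} D^{-1}\BPGm)^{hH}$, since $\mathrm{E}G$ restricts to a model of $\mathrm{E}H$.

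The remaining step is to identify the underlying spectrum. By \cite[Proposition 3.2(2)]{carrickSmashingLocalizationsEquivariant2022}, already used in the proof of \cref{cor:BPGmEh_equivalence}, $\Res^G_e L_{\Infl_e^G K(h)} \simeq L_{K(h)}\Res^G_e$. Hence $X^H \simeq (L_{K(h)} D^{-1}\BPGm)^{hH}$, where the $H$-action is the one inherited from the $G$-action. Combining the three steps gives the claimed equivalence.

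We expect the main point requiring care to be the identification of genuine fixed points of the mapping spectrum out of $\Sigma^\infty_+\mathrm{E}G$ with homotopy fixed points, compatibly with restriction to $H$. This is standard: for the Borel completion functor $Y \mapsto \ul{\map}_G(\Sigma^\infty_+\mathrm{E}G, Y)$ one has $\Res^G_H$ commuting with it and $(-)^H$ of a Borel $H$-spectrum equal to $(-)^{hH}$. The rest is formal.
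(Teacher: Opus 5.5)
Your proposal is correct and is essentially the paper's argument: the paper's proof is the one line ``apply $(-)^{hH}$ to the equivalence of \cref{cor:BPGmEh_equivalence}''. Your steps just spell out why that works, namely that genuine fixed points of Borel spectra are homotopy fixed points and that \cite[Proposition 3.2(2)]{carrickSmashingLocalizationsEquivariant2022} identifies the underlying spectrum with $L_{K(h)}D^{-1}\BPGm$. Your side remark about the $G$-action commuting with $C(k,m)$ is not needed for the argument.
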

\begin{proof}
    This follows directly from applying \((-)^{hH}\) to the equivalence of \cref{cor:BPGmEh_equivalence}.
\end{proof}

We give the following application:

\begin{corollary}
    Let $G = C_{2^n}$ and $h = 2^{n-1}m$. Suppose that $k^{\times}$ contains all $q = (2^m-1)$-roots of unity. Then, $L_{K(h)}\left(D^{-1}\BPGm^{H}\right)\neq 0$ for all $H \leq G$.
\end{corollary}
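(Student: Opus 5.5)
We need to show that $L_{K(h)}\bigl((D^{-1}\BPGm)^H\bigr)\neq 0$ for every $H\leq G$. The idea is to produce a ring map from this spectrum to a nonzero $K(h)$-local target, so that a unit is not killed. The natural target is $\EO_h(C(k,m)\times H)$, which is nonzero. Indeed, it is the homotopy fixed points of a nonzero $K(h)$-local $\E_\infty$-ring, and it receives a unital map to the nonzero ring $E(k,\Gamma_h)$, so its unit is nonzero.

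\emph{Step 1.} Use the map $D^{-1}\BPGm\to E(k,\Gamma_h)\to E(k,\Gamma_h)^{hC(k,m)}$ of $\BPG$-modules coming from \cref{thm:BHSZ_refinement}. Its composite with the projection $D^{-1}\BPG\to D^{-1}\BPGm$ is the Beaudry--Hill--Shi--Zeng map. Take categorical $H$-fixed points and compose with the canonical map from genuine to homotopy fixed points. This gives maps
\[ (D^{-1}\BPG)^H\longrightarrow (D^{-1}\BPGm)^H\longrightarrow \bigl(E(k,\Gamma_h)^{hC(k,m)}\bigr)^{H}\longrightarrow \EO_h(C(k,m)\times H). \]
The outer composite is a map of homotopy ring spectra, since it comes from the homotopy ring map of Beaudry--Hill--Shi--Zeng. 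In particular it sends the unit to the unit.

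\emph{Step 2.} The target $\EO_h(C(k,m)\times H)$ is $K(h)$-local. It is a limit of $K(h)$-local spectra, since the underlying spectrum of $E(k,\Gamma_h)^{hC(k,m)}$ is $K(h)$-local, as noted in the proof of \cref{cor:BPGmEh_equivalence}. So the map factors through $L_{K(h)}\bigl((D^{-1}\BPGm)^H\bigr)$.

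\emph{Step 3.} Suppose, for contradiction, that $L_{K(h)}\bigl((D^{-1}\BPGm)^H\bigr)=0$. Then the composite $(D^{-1}\BPGm)^H\to\EO_h(C(k,m)\times H)$ is null. Precomposing with $(D^{-1}\BPG)^H\to(D^{-1}\BPGm)^H$ shows that the unital ring map $(D^{-1}\BPG)^H\to\EO_h(C(k,m)\times H)$ is null. Hence $1=0$ in $\pi_0\EO_h(C(k,m)\times H)$, which forces this ring to vanish. That contradicts its nonvanishing.

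The main obstacle is Step 1. One must check that the genuine fixed points of the $\BPG$-module map are compatible with the ring maps $(D^{-1}\BPG)^H\to\EO_h$, since the factorization through $\BPGm$ is only a module map. The argument sidesteps this by never needing multiplicativity on $\BPGm$: the unit $1\in\pi_0^H D^{-1}\BPG$ is sent to the unit of $\EO_h$ by the ring map alone. Alternatively, \cref{cor:fixed_points_EOn_equivalence} shows directly that $(L_{K(h)}D^{-1}\BPGm)^{hH}$ is nonzero, and one could try to compare it with the left side. That comparison, however, requires relating $L_{K(h)}$ of genuine fixed points to fixed points of the equivariant localization, so I would prefer the unit argument.
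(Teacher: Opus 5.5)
Your argument is correct, but it follows a different route from the paper's. The paper first invokes Carrick's theorem: for $\MUG$-modules, $L_{K(h)}$ of genuine and homotopy $H$-fixed points agree. It then uses the norm map $(-)_{hH}\to(-)^{hH}$ and $K(h)$-local Tate vanishing to factor the equivalence $L_{K(h)}\bigl((D^{-1}\BPGm)_{hH}\bigr)\simeq (L_{K(h)}D^{-1}\BPGm)^{hH}\simeq \EO_h(C(k,m)\times H)$ through $L_{K(h)}\bigl((D^{-1}\BPGm)^{hH}\bigr)$, with the target nonzero by \cref{cor:fixed_points_EOn_equivalence}. This is exactly the comparison you set aside in your last paragraph.

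Your unit argument needs only the commutative triangle of \cref{thm:BHSZ_refinement} and the unitality of the Beaudry--Hill--Shi--Zeng map. It therefore avoids Carrick's theorem, Tate vanishing and the $K(h)$-local equivalence of \cref{cor:BPGmEh_equivalence}. You could even map straight to the nonzero $K(h)$-local ring $E(k,\Gamma_h)^{hH}$ and drop $C(k,m)$ entirely. Running the same argument with the left-hand factorization also gives $L_{K(h)}\bigl((\BPGm)^H\bigr)\neq 0$ without inverting $D$.

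The paper's route gives more information in return. It identifies $L_{K(h)}$ of the genuine fixed points with that of the homotopy fixed points, and it exhibits $\EO_h(C(k,m)\times H)$ as a retract of $L_{K(h)}\bigl((D^{-1}\BPGm)^{H}\bigr)$, not merely as the target of a nonzero map.

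One small overstatement: the splitting $E(k,\Gamma_h)\to E(k,\Gamma_h)^{hC(k,m)}$ is only recalled as a section of a ring map, so calling your outer composite a homotopy ring map claims more than is known. This does not affect the proof, because a section of a unital map automatically sends the unit to the unit, and that is all you use.
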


\begin{proof}
    Since $D^{-1} \BPGm$ is an \(\MUG\)-module, it follows from \cite[Theorem 2.14]{carrick2025higherrealktheoriesfinite} that we have 
    \[ L_{K(h)}\left(D^{-1}\BPGm^{H}\right) \simeq L_{K(h)}\left(D^{-1}\BPGm^{hH}\right), \]
    which we need to show to be non-zero. The map $\Sigma_+^{\infty} \mathrm{EH} \to \S$ induces a map
    \[ D^{-1} \BPGm \longrightarrow \ul{\map}_H \left(\Sigma_+^{\infty} \mathrm{EH}, D^{-1}\BPGm \right), \]
    where $\ul{\map}_H$ denotes the mapping $H$-spectrum. The naturality of the norm map $(-)_{hH} \Rightarrow (-)^{hH}$ yields a commutative diagram
    \begin{center}
        \begin{tikzcd}
            D^{-1} \BPGm_{hH} \arrow[r] \arrow[d] & D^{-1} \BPGm^{hH} \arrow[d]
            \\ \left(L_{K(h)}D^{-1} \BPGm \right)_{hH} \arrow[r] & \left(L_{K(h)} D^{-1} \BPGm \right)^{hH}
        \end{tikzcd}
    \end{center}
    of spectra. Since $K(h)$-local spectra are closed under limits, the bottom right corner is $K(h)$-local. Applying $L_{K(h)}$ to the square thus yields a commutative square
    \begin{center}
        \begin{tikzcd}
            L_{K(h)} \left(D^{-1} \BPGm_{hH} \right) \arrow[r] \arrow[d] & L_{K(h)} \left(D^{-1} \BPGm^{hH} \right) \arrow[d]
            \\ \left(L_{K(h)}D^{-1} \BPGm \right)_{hH} \arrow[r] & \left(L_{K(h)} D^{-1} \BPGm \right)^{hH}
        \end{tikzcd}
    \end{center}
    The left map is an equivalence since $L_{K(h)}$ commutes with colimits. The bottom map is an equivalence by $K(h)$-local Tate vanishing \cite{greenleesSadofsky1996tate, hoveySadofsky1996tate}. Moreover, $\left(L_{K(h)} D^{-1} \BPGm \right)^{hH}$ is non-zero by \cref{cor:fixed_points_EOn_equivalence}. So the top right corner of the square cannot be $0$.
\end{proof}

When we set out on this project, one of our original motivations was to improve \cref{cor:BPGmEh_equivalence} by constructing an equivariant equivalence 
\[
\ul{\map}_G \left(\Sigma_+^{\infty} \mathrm{EG}, L_{\Infl_e^G K(h)} \BPGm \right) \xrightarrow{\ \simeq \ } E(k,\Gamma_h)^{hC(k,m)}
\]
via the Beaudry--Hill--Shi--Zeng orientation (\cref{thm:BHSZ_refinement}). It would generalize the non-equivariant Baker--Würgler equivalence $L_{K(h)} \BP \langle h \rangle \simeq \widehat{E}(h)$, see \cref{remark: Baker wuergler}, and was expected by experts to be true.

We were surprised to find that it is actually not an equivalence in general. In fact, the failure can already be seen on the underlying non-equivariant level. Let us recall and set up some notation first.

\begin{recollection}
    \hfill \label{recollection: BHSZ computations}
    \begin{enumerate}[(i)]
        \item There exist certain elements $\tb{t_i^{C_{2^n}}} \in \pi_{2(2^i-1)}^e \BP^{( \! ( C_{2^n} ) \! )}$ for $i \geq 1$ determining the group action on a specified formal group law. See \cite[(1.3)]{beaudryHillShiZeng2021modelsLubinTate} for a precise definition. With these,
        \[ \pi_*^e \BP^{( \! ( C_{2^n} ) \! )} \cong \Z_{(2)}[C_{2^n} \cdot t_1^{C_{2^n}}, C_{2^n} \cdot t_2^{C_{2^n}}, \cdots] \]
        where $C_{2^n} \cdot x = \{x, \gamma_n x, \gamma_n^2 x, \cdots, \gamma_n^{2^{n-1}-1}x \}$ with $\gamma_n$ denoting a generator of $C_{2^n}$.
        \item We will write $\tb{I_k} \coloneqq (2,v_1, \cdots, v_{k-1}) \subseteq \pi_* \BP^{( \! ( C_{2^n}) \! )}$ where $v_i \in \pi_* \BP \to \pi_* \BP^{( \! ( C_{2^n}) \! )}$ are the Araki generators.
        \item There is a recursive formula
        \[ t_k^{C_{2^{n-1}}} \equiv t_k^{C_{2^n}} + \gamma_n t_k^{C_{2^n}} + \sum_{j=1}^{k-1} \gamma_n t_j^{C_{2^n}} (t_{k-j}^{C_{2^n}})^{2^j} \pmod{I_k} \]
        by \cite[Theorem 1.1]{beaudryHillShiZeng2021modelsLubinTate}. Here, $t_k^{C_2} \equiv v_k \pmod{I_k}$, see \cite[Proposition 3.5]{beaudryHillShiZeng2021modelsLubinTate}. We will later relate $v_k$ to $t_k^{C_{2^n}}$ for $n > 1$, see \cref{theorem: vk is Vk}.
    \end{enumerate}
\end{recollection}

\begin{remark} \label{remark: coefficients composition}
    At this point, we want to give a remark in pure algebra. Let $R$ be a ring of characteristic $2$. If $A(T) = T + \sum_{k \geq 1} a_k T^{2^k} \in R \llbracket T \rrbracket$ and $B(T) = T + \sum_{k \geq 1} b_k T^{2^k} \in R \llbracket T \rrbracket$, then the coefficients of the composition $B(A(T)) = T + \sum_{k \geq 1} c_k T^{2^k}$ are given by 
    \[ c_k = a_k + b_k + \sum_{j=1}^{k-1} b_{j}a_{k-j}^{2^j}. \] This is precisely the sort of formula appearing in Beaudry--Hill--Shi--Zeng's recursive formula. We will use this observation later in \cref{theorem: vk is Vk}.
\end{remark}

Let us furthermore recall and record the following formula for $K(h)$-localization, since it will be important for our computations.

\begin{remark} \label{remark: Kh localization}
    Let $E$ be a complex oriented spectrum such that the image of $(2,v_1, \cdots, v_{h-1})$ under $\MU \to E$ forms a regular sequence in $\pi_*E$. Then,
    \[ \pi_*(L_{K(h)}E) \cong v_h^{-1}\pi_*(E)_{(2,v_1,\cdots,v_{h-1})}^{\wedge}, \]
    see e.g.~\cite[Proposition 7.4]{beaudryHillShiZeng2021modelsLubinTate}. Examples are $\Res_e^G D^{-1} \BPGm$ by \cite[Proposition 7.4]{beaudryHillShiZeng2021modelsLubinTate} and $\Res_e^G \BPGm$ by \cite[Corollary 3.3]{carrick2025higherrealktheoriesfinite}.
\end{remark}

Let us begin by collecting all positive examples. 

\begin{proposition}
    \hfill \label{prop: positive examples lubin tate}
    \begin{enumerate}[(i)]
        \item Let $h \geq -1$. The natural map 
        \[ \ul{\map}_{C_2}\left(\Sigma_+^{\infty} \mathrm{E}C_2, L_{\Infl_e^{C_2}K(h)} \BPR\langle h \rangle \right) \longrightarrow \ul{\map}_{C_2}\left(\Sigma_+^{\infty} \mathrm{E}C_2, L_{\Infl_e^{C_2}K(h)} D^{-1} \BPR\langle h \rangle \right) \]
        is an equivalence of $C_2$-spectra.
        \item The natural map 
        \[ \ul{\map}_{C_4}\left(\Sigma_+^{\infty} \mathrm{E}C_4, L_{\Infl_e^{C_4} K(2)} \BP^{( \! ( C_4 ) \! )} \langle 1 \rangle \right) \longrightarrow \ul{\map}_{C_4}\left(\Sigma_+^{\infty} \mathrm{E}C_4, L_{\Infl_e^{C_4} K(2)} D^{-1}\BP^{( \! ( C_4 ) \! )} \langle 1 \rangle \right) \] 
        is an equivalence of $C_4$-spectra.
    \end{enumerate}
\end{proposition}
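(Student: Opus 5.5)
Both maps are maps of Borel $G$-spectra, since $\ul{\map}_G(\Sigma^\infty_+\mathrm{E}G,-)$ lands in Borel-complete objects. So, as in the proof of \cref{cor:BPGmEh_equivalence}, it suffices to show that the underlying non-equivariant map is an equivalence. By \cite[Proposition 3.2(2)]{carrickSmashingLocalizationsEquivariant2022}, $L_{\Infl_e^G K(h)}$ restricts to $L_{K(h)}$ on underlying. The statement therefore reduces to showing that
\[ L_{K(h)}\Res^G_e \BPGm \longrightarrow L_{K(h)}\Res^G_e D^{-1}\BPGm \]
is an equivalence. Here $h=m$ for $G=C_2$, and $(h,m,G)=(2,1,C_4)$ in part (ii). Since $D^{-1}$ is a localization inverting a single homogeneous element, it suffices to show that the underlying image of $D$ already becomes a unit in $\pi_*L_{K(h)}\Res^G_e\BPGm$. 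Then $L_{K(h)}\BPGm\to L_{K(h)}D^{-1}\BPGm$ is an equivalence because the source is already $D$-local.

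By \cref{remark: Kh localization}, $\pi_*L_{K(h)}\Res_e^G\BPGm \cong v_h^{-1}\pi_*(\Res_e^G\BPGm)^\wedge_{I_h}$. In a ring complete with respect to $I_h$, an element is a unit if and only if its image mod $I_h$ is a unit. So I will compute the image of $D$ in $v_h^{-1}\pi_*(\Res^G_e\BPGm)/I_h$ and show it is a unit there.

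\textbf{Part (i).} Take $G=C_2$. Here $\pi_*\Res^{C_2}_e\BPR\langle h\rangle = \Z_{(2)}[v_1,\dots,v_h]$, so
\[ v_h^{-1}\pi_*/I_h = \F_2[v_h^{\pm1}]. \]
By the footnote in \cref{recollection: bhsz}, $D$ is a product of norms $N_e^{C_2}x$, where $x$ ranges over the classes that become invertible in $E_h$. The only such generator surviving in $\BPR\langle h\rangle$ is, up to units, $\ol v_h$. The underlying image of $N_e^{C_2}\ol v_h$ is $v_h\cdot\gamma v_h=\pm v_h^2$, which is a unit. I also need the unit $1$ case to handle degenerate $h\le 0$ trivially: for $h=0$ the ring is rational, and for $h=-1$ both sides vanish.

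\textbf{Part (ii).} Take $G=C_4$, $m=1$, $h=2$. Then
\[ \pi_*\Res^{C_4}_e\BP^{((C_4))}\langle1\rangle = \Z_{(2)}[t_1,\gamma t_1], \qquad t_1=t_1^{C_4}. \]
Using \cref{recollection: BHSZ computations}(iii), I will express $v_1$ and $v_2$ modulo the relevant ideals. From the recursive formula,
\[ v_1\equiv t_1^{C_2}\equiv t_1+\gamma t_1 \pmod 2. \]
Modulo $(2,v_1)$ we have $\gamma t_1=t_1$, so
\[ v_2\equiv t_2^{C_2}\equiv \gamma t_1\cdot t_1^2 \equiv t_1^3 \]
in $\F_2[t_1]$, since $t_2^{C_4}=0$ in the quotient. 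This gives $v_2^{-1}\pi_*/I_2 \cong \F_2[t_1^{\pm1}]$, consistent with \cite[Proposition 7.1]{beaudryHillShiZeng2021modelsLubinTate}. The class $D$ is the norm of the element $t_1$ (up to units), whose underlying image is $t_1\gamma t_1\gamma^2t_1\gamma^3 t_1 = \pm (t_1\gamma t_1)^2$, which reduces to $t_1^4$, a unit.

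\textbf{Main obstacle.} The hardest step is pinning down exactly which classes enter $D$ and verifying that each of their images is nonzero, hence a unit, modulo $I_h$. If $D$ contains additional factors coming from generators killed in $\BPGm$, the images of those factors are harmless only if they are nonzero mod $I_h$. I will handle this by noting that $D$ maps to a unit in $E(k,\Gamma_h)$ and that the composite through $\BPGm$ reduces mod $I_h$ to the map $\F_2[t_1^{\pm1}]\to \pi_*E/I_h$ (respectively $\F_2[v_h^{\pm1}]$ in part (i)), which is injective in these cases. That injectivity forces the image of $D$ to be a nonzero homogeneous element of a graded Laurent ring over $\F_2$, hence a unit.
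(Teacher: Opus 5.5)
Your argument is correct. For (i) it is essentially the paper's proof: both reduce to underlying spectra and note that $D$ is, up to a unit, $\ol{v}_h$, which $L_{K(h)}$ already inverts.

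For (ii) you take a different route. The paper postcomposes with the equivalence of \cref{cor:BPGmEh_equivalence}. It then uses the explicit description of $\pi_*E(k,\Gamma_2)^{hC(k,1)}$ from \cite[Proposition 5.4]{beaudryHillShiZeng2021modelsLubinTate} to check directly that the induced map of completed rings is an isomorphism. This requires no knowledge of $D$, but it depends on an explicit computation of the target. You instead show that $D$ already acts invertibly on $L_{K(2)}\Res^{C_4}_e\BP^{( \! ( C_4 ) \! )}\langle 1\rangle$. For this you use only $v_2^{-1}\pi_*/I_2\cong\F_2[t_1^{\pm 1}]$, $I_2$-adic completeness, and the fact that $D$ maps to a unit in $E(k,\Gamma_2)$. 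This avoids Proposition 5.4 entirely and gives a uniform criterion: the map is an equivalence if and only if the image of $D$ is a unit in $v_h^{-1}\pi_*\Res^G_e\BPGm/I_h$. That is automatic when this graded ring is $\F_2[t^{\pm 1}]$, which is precisely the situation in (i) and (ii). It is also precisely what breaks down in \cref{lemma: BP C4 2 mod I computation} and \cref{prop: F4 points computation}, so your approach makes the dichotomy of \cref{mainthm: characterization} transparent.

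Two things to tighten.

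\textbf{The form of $D$.} The footnote in \cref{recollection: bhsz} defines $D$ via norms $N_{C_2}^G x$, not $N_e^G x$. For $G=C_2$ there is no norm, so the underlying image of $D$ is $v_h$ rather than $\pm v_h^2$. For $G=C_4$, the underlying image of $N_{C_2}^{C_4}\ol{t}_1$ is $t_1\cdot\gamma t_1$. This does not change your conclusions, but no weight should rest on an explicit identification of $D$. By \cref{theorem: vk is Vk}, $v_h$ is congruent modulo $I_h$ to the leading coefficient $\prod_{r=0}^{2^{n-1}-1}(\gamma_n^r t_m^G)^{2^{m(2^{n-1}-1-r)}}$ of $P(T)$ over $\pi_*\Res^G_e\BPGm$. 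So if $D$ were just a product of norms of $G$-translates of $\ol{t}_m$, it would be a unit in every case, and the same reasoning would contradict \cref{theorem: not higher real k theory}. You should therefore promote the argument from your last paragraph to the main argument.

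\textbf{Injectivity is not needed.} The map $\pi_*\Res^G_e\BPGm\to\pi_*E(k,\Gamma_h)$ is the map induced on the quotient ring by the ring map out of $\pi_*\Res^G_e\BPG$, because the factorization is a $\BPG$-module map sending $1$ to $1$. The ideal $I_h\pi_*E(k,\Gamma_h)$ is proper because $E(k,\Gamma_h)$ has height $h$. Since the image of $D$ in $\pi_*E(k,\Gamma_h)$ is a unit, it is nonzero modulo $I_h$. Hence the reduction of $D$ in $\F_2[t^{\pm 1}]$ is a nonzero homogeneous element, and therefore a unit.
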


\begin{proof}
    Equivalences of Borel spectra can be checked on underlying. We will use the $L_{K(h)}$ formula from \cref{remark: Kh localization}.
    \begin{enumerate}[(i)]
        \item We fully understand $\pi_*^{C_2} E(k,\Gamma_h)$ since $E(k, \Gamma_h)$ is strongly even \cite[Theorem 1.9]{hahnRealOrientationsLubin2020}. With this, we can deduce $D = \ol{v}_h$ using \cite[Proposition 6.3]{beaudryHillShiZeng2021modelsLubinTate}, so the desired follows immediately.
        \item First postcompose by the equivalence 
        \[ \ul{\map}_{C_4}\left(\Sigma_+^{\infty} \mathrm{E}C_4, L_{\Infl_e^{C_4} K(2)} D^{-1}\BP^{( \! ( C_4 ) \! )} \langle 1 \rangle \right) \xrightarrow{\ \simeq \ } E(k,\Gamma_2)^{hC(k,1)}, \]
        so it suffices to show that the composite 
        \[ L_{K(2)} \Res_e^{C_4} \BP^{( \! ( C_4 ) \! )} \langle 1 \rangle \longrightarrow E(k,\Gamma_2)^{hC(k,1)} \]
        is an equivalence. By \cite[Proposition 5.4]{beaudryHillShiZeng2021modelsLubinTate} this amounts to the inclusion map
        \[ \Z_{(2)} \left[t_1^{C_4}, \gamma t_1^{C_4} \right][v_2^{-1}]_{(2,v_1)}^{\wedge} \longrightarrow \Z_{2} \left[t_1^{C_4}, \gamma t_1^{C_4}, (t_1^{C_4})^{-1}, (\gamma t_1^{C_4})^{-1} \right]_{(t_1^{C_4} - \gamma t_1^{C_4}, t_1^{C_4} + \gamma t_1^{C_4})}^{\wedge}. \]
        On the other hand, $(t_1^{C_4} - \gamma t_1^{C_4}, t_1^{C_4} + \gamma t_1^{C_4}) = (2, t_1^{C_4}+\gamma t_1^{C_4})$. Using \cref{recollection: BHSZ computations} we get
        \begin{align*}
            v_1 &\equiv t_1^{C_4} + \gamma t_1^{C_4} \pmod 2,
            \\ v_2 &\equiv (\gamma t_1^{C_4})(t_1^{C_4})^{2} \pmod{(2,v_1)},
        \end{align*}
        so we see that the map is an isomorphism. \qedhere
    \end{enumerate}
\end{proof}

Certainly, \cref{prop: positive examples lubin tate}(i) was already known by Beaudry--Hill--Shi--Zeng, although it was not explicitly spelled out. Applying \cref{cor:BPGmEh_equivalence} we deduce:

\begin{corollary}
    \hfill 
    \begin{enumerate}[(i)]
        \item Suppose that $k^{\times}$ contains all $(2^h-1)$-roots of unity. There is an equivalence 
        \[ E(k,\Gamma_h)^{hC(k,h)} \simeq \ul{\map}_{C_2}\left(\Sigma_+^{\infty} \mathrm{E}C_2, L_{\Infl_e^{C_2}K(h)} \BPR\langle h \rangle \right) \] 
        of $C_2$-spectra.
        \item There is an equivalence $E(k, \Gamma_2)^{hC(k,1)} \simeq \ul{\map}_{C_4}\left(\Sigma_+^{\infty} \mathrm{E}C_4, L_{\Infl_e^{C_4} K(2)} \BP^{( \! ( C_4 ) \! )} \langle 1 \rangle \right)$ 
    of $C_4$-spectra.
    \end{enumerate}
\end{corollary}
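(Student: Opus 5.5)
The corollary follows by composing two equivalences of Borel $G$-spectra, using \cref{prop: positive examples lubin tate} together with \cref{cor:BPGmEh_equivalence}.

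For (i), take $G = C_2$, so $n=1$ and $h = m$. The hypothesis that $k^\times$ contains all $(2^h-1)$-roots of unity is exactly the hypothesis of \cref{cor:BPGmEh_equivalence} with $q = 2^m-1$. That corollary gives an equivalence
\[ \ul{\map}_{C_2}\left(\Sigma_+^{\infty} \mathrm{E}C_2, L_{\Infl_e^{C_2}K(h)} D^{-1}\BPR\langle h \rangle \right) \xrightarrow{\ \simeq\ } E(k,\Gamma_h)^{hC(k,h)}. \]
Precomposing with the equivalence of \cref{prop: positive examples lubin tate}(i), induced by the localization $\BPR\langle h\rangle \to D^{-1}\BPR\langle h\rangle$, gives the claimed equivalence. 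The composite is the map induced by the factored orientation $\BPR\langle h\rangle \to E(k,\Gamma_h)$ from \cref{thm:BHSZ_refinement}, since the $D^{-1}$-factorization restricts to it along $\BPR\langle h\rangle \to D^{-1}\BPR\langle h \rangle$. It is therefore also an equivalence of $\BPR$-modules, although the statement only asks for $C_2$-spectra.

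For (ii), take $G = C_4$, $n = 2$, $m = 1$, so $h = 2$ and $q = 2^1 - 1 = 1$. The roots-of-unity hypothesis is then vacuous, because $k^\times[1]$ is trivial. This is why no assumption on $k$ appears in (ii). The same composition of \cref{prop: positive examples lubin tate}(ii) with \cref{cor:BPGmEh_equivalence} gives the equivalence of $C_4$-spectra.

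The only point needing care is that the maps compose correctly. The map in \cref{prop: positive examples lubin tate} is the one induced by $\BPGm \to D^{-1}\BPGm$, and the map in \cref{cor:BPGmEh_equivalence} is the one induced by the dashed factorization of \cref{thm:BHSZ_refinement}. Both are natural maps between Borel-completed $K(h)$-localizations, so their composite is well defined. It is an equivalence because each factor is one, which we could also check on underlying spectra. I do not expect any real obstacle beyond verifying that $q=1$ in case (ii).
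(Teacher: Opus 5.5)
Your proposal is correct and is the same argument as the paper's: the corollary is deduced by composing the equivalences of \cref{prop: positive examples lubin tate} with \cref{cor:BPGmEh_equivalence}. You specialize correctly to $n=1$, $m=h$ in (i), and to $n=2$, $m=1$ in (ii), where $q=1$ makes the roots-of-unity hypothesis vacuous, which is why (ii) carries no assumption on $k$.
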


\begin{remark} \label{remark: Baker wuergler}
    In particular, $E(k, \Gamma_h)^{hC(k,h)}$ is a $C_2$-equivariant refinement of completed Johnson--Wilson theory $L_{K(h)}\BP\langle h \rangle \simeq \widehat{E}(h)$ by a theorem of Baker--Würgler \cite{bakerWuergler1989liftings}.
\end{remark}

\begin{remark}
    In forthcoming work \cite{Qtmf15},
    the first-named author produces a \(C_4\)-equivariant equivalence \(\tmf_1(5)\simeq \BP^{(\!(C_4)\!)}\langle 1\rangle\) that holds before applying \(K(h)\)-localization. This constructs a \(\E^{C_4}_\infty\)-ring structure on \(\BP^{(\!(C_4)\!)}\langle 1\rangle\), and hence an \(\E_\infty\)-ring structure on \(\BP^{(\!(C_4)\!)}\langle 1\rangle^{C_4}\).
    This shows that \(\tmf_1(5)\) is a \(C_4\)-analogue of \(\kuR\) \cite{HillHopkinsRavenel2017C4RealKTheory}.
\end{remark}

Next, we will embark on proving that all other examples are negative examples. Let us begin with $L_{K(4)} \BP^{( \! ( C_4 ) \! )} \langle 2 \rangle$, where we are able to provide an explicit computation. 

\begin{lemma} \label{lemma: BP C4 2 mod I computation}
    There is an isomorphism
    \[ \pi_*^e\left(L_{K(4)}\BP^{( \! ( C_4 ) \! )}\langle 2 \rangle \right)/(2,v_1, v_2, v_3) \cong \F_4 \left[(t_1^{C_4})^{\pm 1} \right] \times \F_2 \left[(t_2^{C_4})^{\pm 1} \right] \]
    of $\BP_*$-modules.
\end{lemma}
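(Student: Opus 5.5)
The plan is to compute both sides of the statement directly from the underlying homotopy ring. We work with $G = C_4$, so $n = 2$, $m = 2$ and $h = 4$. By \cref{recollection: BHSZ computations}(i) and the definition in \cref{example: BPGJ}, the underlying homotopy of the truncation is
\[ \pi_*^e \BP^{(\!(C_4)\!)}\langle 2\rangle \cong \Z_{(2)}[a,b,c,d], \qquad a = t_1^{C_4},\ b = \gamma t_1^{C_4},\ c = t_2^{C_4},\ d = \gamma t_2^{C_4}. \]
All classes $C_4 \cdot t_k^{C_4}$ with $k \geq 3$ map to zero.

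By \cref{remark: Kh localization}, which applies to $\Res^{C_4}_e \BP^{(\!(C_4)\!)}\langle 2\rangle$ by \cite[Corollary 3.3]{carrick2025higherrealktheoriesfinite}, we have
\[ \pi_*(L_{K(4)} -) \cong v_4^{-1}\pi_*(-)^\wedge_{I_4}. \]
Reducing modulo the regular sequence $I_4 = (2,v_1,v_2,v_3)$ removes the completion, since completion followed by quotienting by the completing ideal equals the plain quotient. So we must identify
\[ v_4^{-1}\,\F_2[a,b,c,d]/(v_1,v_2,v_3). \]

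The key step is expressing $v_1,\dots,v_4$ in terms of $a,b,c,d$. We use $v_k \equiv t_k^{C_2} \pmod{I_k}$ together with the recursive formula of \cref{recollection: BHSZ computations}(iii) for $n = 2$, setting $t_k^{C_4} = 0$ for $k \geq 3$. The recursion only involves the elements $\gamma t_j^{C_4}$, so no knowledge of $\gamma^2$ is needed. Inductively modulo $I_k$ this gives
\[ v_1 \equiv a+b, \quad v_2 \equiv c+d+ba^2, \quad v_3 \equiv bc^2+da^4, \quad v_4 \equiv bt_3^2 + dc^4 + \gamma t_3\, a^8 \equiv dc^4. \]
Since each congruence holds modulo $I_k$ and we quotient by all of $I_4$ anyway, these descriptions suffice. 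Substituting $b = a$ and then $d = c + a^3$, the ring becomes
\[ \F_2[a,c]\big[c^{-1},(c+a^3)^{-1}\big]\big/\big(a(c^2+a^3c+a^6)\big). \]
Here we use
\[ ac^2 + (c+a^3)a^4 = a\,(c^2+a^3c+a^6). \]

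Next we split this ring using the Chinese remainder theorem. The ideals $(a)$ and $(c^2+a^3c+a^6)$ are comaximal once $c$ is inverted, since on $a = 0$ the second generator becomes $c^2$, a unit. The factor $a = 0$ gives $\F_2[c^{\pm1}] = \F_2[(t_2^{C_4})^{\pm 1}]$; here $c + a^3 = c$ is already inverted.

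On the other factor, $c$ invertible forces $a$ to be invertible, because $c^2 \in (a)$ there. Then $u = c/a^3$ satisfies $u^2+u+1 = 0$, so this factor is $\F_2[a^{\pm1}][u]/(u^2+u+1) \cong \F_4[(t_1^{C_4})^{\pm1}]$. Note that $c+a^3 = a^3(u+1) = a^3u^2$ is automatically a unit there.

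I expect the main obstacle to be the bookkeeping in the recursion. One must make sure the congruences for $v_k$, which are valid only modulo $I_k$, are used consistently, and that the higher $t_k^{C_4}$ really vanish in $\BP^{(\!(C_4)\!)}\langle 2\rangle$ compatibly with the chosen generators; this is where \cref{theorem: vk is Vk} or \cite[Theorem 1.1]{beaudryHillShiZeng2021modelsLubinTate} enters. The $\BP_*$-module structure is then transparent, since each $v_i$ acts through its image computed above.
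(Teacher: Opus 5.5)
Your proposal is correct and follows essentially the same route as the paper: the $K(h)$-localization formula, the Beaudry--Hill--Shi--Zeng recursion to rewrite $I_4$ and $v_4$ in terms of $t_1^{C_4},\gamma_2 t_1^{C_4},t_2^{C_4},\gamma_2 t_2^{C_4}$, the substitution $\gamma_2 t_1^{C_4}\mapsto t_1^{C_4}$, $\gamma_2 t_2^{C_4}\mapsto t_2^{C_4}+(t_1^{C_4})^3$, and a Chinese remainder splitting with $T=t_2^{C_4}/(t_1^{C_4})^3$. You also spell out two steps the paper leaves implicit: why quotienting by $I_4$ removes the completion, and why the two factors are comaximal once $t_2^{C_4}$ is inverted.
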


\begin{proof}
    We use \cref{remark: Kh localization} to see
    \[
    \pi_*\left(L_{K(4)}\Res_e^{C_4}\BP^{( \! ( C_4 ) \! )}\langle 2 \rangle \right)\cong \left(v_4^{-1}\pi_*(\Res_e^{C_4}\BP^{( \! ( C_4 ) \! )}\langle 2 \rangle)\right)^{\wedge}_{I_4}.
    \]
    Using \cite[Theorem 1.1]{beaudryHillShiZeng2021modelsLubinTate} and \cite[Proposition 3.5]{beaudryHillShiZeng2021modelsLubinTate}, we have the following explicit formulae for the \(v_i\)'s in terms of the \(t_i^{C_4}\)'s:
    \begin{align*}
        v_1 &\equiv t_1^{C_4}+\gamma_2 t_1^{C_4} &&\operatorname{mod} \,\, (2)\\
        v_2 &\equiv t_2^{C_4}+\gamma_2 t_2^{C_4}+\gamma_2 t_1^{C_4}\left(t_{1}^{C_4}\right)^{2} &&\operatorname{mod} \,\,  (2,v_1)\\
        v_3 &\equiv \gamma_2 t_1^{C_4}\left(t_{2}^{C_4}\right)^{2} +\gamma_2 t_2^{C_4}\left(t_{1}^{C_4}\right)^{4}    && \operatorname{mod} \,\, (2,v_1,v_2)\\
        v_4 &\equiv \gamma_2t_2^{C_4}\left(t_{2}^{C_4}\right)^{4}   &&\operatorname{mod}\,\,  (2,v_1,v_2,v_3).
    \end{align*}
    Therefore, 
    \[I_4 =  \left(
    2,
    \,\,
    t_1^{C_4}+\gamma_2 t_1^{C_4},
    t_2^{C_4}+\gamma_2 t_2^{C_4}+\gamma_2 t_1^{C_4}\left(t_{1}^{C_4}\right)^{2},
    \,\,
    \gamma_2 t_1^{C_4}\left(t_{2}^{C_4}\right)^{2} +\gamma_2 t_2^{C_4}\left(t_{1}^{C_4}\right)^{4}
    \right)
    \]
    Using \(\pi_*^e(\BP^{( \! ( C_4 ) \! )}\langle 2 \rangle)\cong \Z_{(2)}[t_1^{C_4},\gamma_2 t_1^{C_4},t_2^{C_4},\gamma t_2^{C_4}]\), we can write down an isomorphism
    \[
    \pi_*^e \left(\BP^{( \! ( C_4 ) \! )}\langle 2 \rangle \right)/I_4 \xrightarrow{\ \sim \ } 
    \F_2[t_1^{C_4},t_2^{C_4}]/
    \left(
    t_1^{C_4}\left(
    (t_2^{C_4})^2 
    +
    t_2^{C_4}(t_1^{C_4})^3
    +
    (t_1^{C_4})^6
    \right)
    \right)
    \]
    induced by
    \[
    t_1^{C_4} \mapsto t_1^{C_4}, \quad 
    \gamma_2 t_1^{C_4} \mapsto t_1^{C_4},\quad 
    t_2^{C_4} \mapsto t_2^{C_4},\quad 
    \gamma_2 t_2^{C_4}\mapsto t_2^{C_4} + (t_1^{C_4})^3.
    \]
    Furthermore, since \(v_4 \equiv \gamma_2t_2^{C_4}\left(t_{2}^{C_4}\right)^{4}\) modulo \(I_4\), we learn that
    \[
    \left(v_4^{-1}\pi_*^e(\BP^{( \! ( C_4 ) \! )}\langle 2 \rangle)\right)^{\wedge}_{I_4}/I_4 \cong
    \F_2 \left[t_1^{C_4},t_2^{C_4},
    (t_2^{C_4})^{-1},
    \left((t_1^{C_4})^3+t_2^{C_4} \right)^{-1}
    \right]/
    \left(
    t_1^{C_4}\left(
    (t_2^{C_4})^2 
    +
    t_2^{C_4}(t_1^{C_4})^3
    +
    (t_1^{C_4})^6
    \right)
    \right)
    \]
    By the Chinese remainder theorem this splits into 
    \[ \F_2 \left[(t_2^{C_4})^{\pm 1} \right] \times \F_2\left[(t_1^{C_4})^{\pm 1}, T \right]/(T^2 + T + 1) \cong \F_2 \left[(t_2^{C_4})^{\pm 1} \right] \times \F_4 \left[(t_1^{C_4})^{\pm 1} \right], \]
    where \(T=t_2^{C_4}/(t_1^{C_4})^3\).
\end{proof}

\begin{corollary}
    There is no equivalence between \(L_{K(4)} \Res^{C_4}_e\BP^{(\!(C_4)\!)}\langle 2\rangle\) and \(L_{K(4)} \Res^{C_4}_e D^{-1}\BP^{(\!(C_4)\!)}\langle 2\rangle\) as $\BP$-modules.
\end{corollary}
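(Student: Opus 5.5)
Suppose for contradiction that there is an equivalence $L_{K(4)} \Res^{C_4}_e\BP^{(\!(C_4)\!)}\langle 2\rangle \simeq L_{K(4)} \Res^{C_4}_e D^{-1}\BP^{(\!(C_4)\!)}\langle 2\rangle$ of $\BP$-modules. The idea is to find an invariant of $\BP$-modules that separates the two sides. Since the equivalence is $\BP$-linear, it induces an isomorphism on $\pi_*$ that is $\BP_*$-linear. It therefore descends to an isomorphism
\[ \pi_*\left(L_{K(4)}\Res^{C_4}_e\BP^{(\!(C_4)\!)}\langle 2\rangle\right)/I_4 \cong \pi_*\left(L_{K(4)}\Res^{C_4}_e D^{-1}\BP^{(\!(C_4)\!)}\langle 2\rangle\right)/I_4 \]
of $\BP_*/I_4$-modules, and in particular of $\F_2[v_4^{\pm 1}]$-modules, since $v_4$ acts invertibly on both sides after $K(4)$-localization.

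The left-hand side was computed in \cref{lemma: BP C4 2 mod I computation}: it is $\F_4[(t_1^{C_4})^{\pm 1}] \times \F_2[(t_2^{C_4})^{\pm1}]$. For the right-hand side I would combine \cref{cor:BPGmEh_equivalence} (on underlying, via \cite[Proposition 7.6]{beaudryHillShiZeng2021modelsLubinTate}) with \cite[Proposition 7.1]{beaudryHillShiZeng2021modelsLubinTate}. These give $\pi^e_*(E(k,\Gamma_4)^{hC(k,2)})/I_4 \cong \F_2[t^{\pm 1}]$.

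Next, compare these as $\F_2[v_4^{\pm1}]$-modules in a fixed degree, or over the degree-zero part. One first determines where $v_4$ lands on each side.
\begin{itemize}
\item On the left, $v_4 \equiv \gamma_2 t_2^{C_4}(t_2^{C_4})^4$, which is $t_2(t_2+t_1^3)^4$. On the $\F_2$-factor (where $t_1=0$) this is $t_2^5$. On the $\F_4$-factor, with $t_2 = T t_1^3$, it is $T(T+1)^4 t_1^{15}$, a unit times $t_1^{15}$.
\item On the right, $v_4$ is a unit multiple of a power $t^{N}$, where $N=15$ is read off from the degrees.
\end{itemize}

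From this, the left side is a free $\F_2[v_4^{\pm1}]$-module of rank $5 + 2\cdot 15 = 35$. This rank counts $\F_2$-dimension over the graded-periodic base, restricting to degrees divisible by $2$. The right side has rank $15$. Hence they cannot be isomorphic.

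A cleaner invariant is the $\F_2$-dimension of a single homotopy group $\pi_{d}/I_4$ for suitable $d$:
\begin{itemize}
\item On the right, every degree $\equiv 0 \pmod 2$ has dimension $\le 1$, since $|t|=2$.
\item On the left, $\F_4[t_1^{\pm1}]$ alone contributes dimension $2$ in degree $|t_1| = 2$.
\end{itemize}
Here $|t_1^{C_4}| = 2$, so degree $2$ contains $\F_4\cdot t_1$, which is $2$-dimensional, while on the right $\pi_2/I_4 = \F_2\cdot t$ is $1$-dimensional. Since a $\BP$-linear equivalence preserves $\pi_2(-)/I_4$, this gives the contradiction.

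The main obstacle is justifying that $\pi_*(-)/I_4$ is computed as the quotient of homotopy rings, i.e.\ that the regularity in \cref{remark: Kh localization} lets us identify $\pi_*(X/I_4)$ with $\pi_*(X)/I_4$ functorially for $\BP$-modules. I would avoid this by using only the algebraic quotient of $\pi_*$, which is manifestly functorial in $\BP_*$-linear isomorphisms. The remaining care is confirming the gradings, in particular that $|t| = 2$ in BHSZ's computation for $h=4$, $m=2$; this comes from their Proposition 7.1, where $t$ is the image of $t_1^{C_4}$ up to units.
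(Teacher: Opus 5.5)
Your argument is correct and uses the same skeleton as the paper: reduce to $\pi_*(-)/I_4$, take the left side from \cref{lemma: BP C4 2 mod I computation} and the right side from \cite[Proposition 7.1]{beaudryHillShiZeng2021modelsLubinTate}. Where you differ is in how you separate the two answers. The paper's proof just juxtaposes the computations, and the remark that follows distinguishes them by counting $\F_4$-points (nine versus three). That count is an invariant of the \emph{rings} $\pi_*(-)/I_4$, so strictly it only rules out ring isomorphisms. Your graded $\F_2$-dimension (equivalently, rank over the graded field $\F_2[v_4^{\pm1}]$) is preserved by any $\BP_*$-linear isomorphism. It therefore matches the statement's ``as $\BP$-modules'' exactly, which is a small but genuine gain in rigor.

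Your grading of the right-hand side is wrong, however. The generator there is $t = t_2^{C_4}$, of degree $6$, not $t_1^{C_4}$ of degree $2$; the paper records the right side as $\F_2[(t_2^{C_4})^{\pm1}]$, and $t_1^{C_4}$ maps to zero there mod $I_4$. Three of your numbers change as a result:
\begin{itemize}
\item $v_4$ is a unit times $t^5$, not $t^{15}$;
\item the right side has rank $5$, not $15$, over $\F_2[v_4^{\pm1}]$;
\item $\pi_2(-)/I_4$ is zero on the right, not $\F_2\cdot t$.
\end{itemize}
The conclusion still holds. All you actually use is that $\F_2[t^{\pm1}]$ has dimension at most $1$ in every degree, while the left side has dimension $2$ in degree $2$ (and $35\neq 5$). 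You should remove the dependence on $|t|$ altogether, for instance by comparing degree $0$: the left side is $\F_4\times\F_2$ (dimension $3$) and the right side is $\F_2$ (dimension $1$).

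A minor slip: since $\gamma_2 t_2^{C_4}\mapsto t_2+t_1^3$, you get $v_4\equiv (t_2+t_1^3)\,t_2^4$, not $t_2(t_2+t_1^3)^4$. This is also harmless, because both are a unit times $t_1^{15}$ on the $\F_4$-factor and $t_2^5$ on the $\F_2$-factor.
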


\begin{proof}
    For brevity, write \(X=\Res^{C_4}_e\BP^{(\!(C_4)\!)}\langle 2\rangle\), and \(Y=\Res^{C_4}_e D^{-1}\BP^{(\!(C_4)\!)}\langle 2\rangle\).
    By \cite[Proposition 7.1]{beaudryHillShiZeng2021modelsLubinTate} we have
    \[
    \pi_* \left(L_{K(4)}\Res^{C_4}_e D^{-1}\BP^{(\!(C_4)\!)}\langle 2 \rangle \right)/(2,v_1,v_2,v_3)\cong \F_2[(t^{C_4}_2)^{\pm 1}].
    \]
    On the other hand, 
    \[ \pi_*\left(L_{K(4)}\Res_e^{C_4}\BP^{( \! ( C_4 ) \! )}\langle 2 \rangle \right)/(2,v_1, v_2, v_3) \cong \F_4 \left[(t_1^{C_4})^{\pm 1} \right] \times \F_2 \left[(t_2^{C_4})^{\pm 1} \right] \] 
    by \cref{lemma: BP C4 2 mod I computation}.
\end{proof}

One of the many ways to distinguish $\F_2[(t_2^{C_4})^{\pm 1}]$ from $\F_4 \left[(t_1^{C_4})^{\pm 1} \right] \times \F_2 \left[(t_2^{C_4})^{\pm 1} \right]$ is by counting the $\F_4$-points on the corresponding affine schemes, i.e.~by counting the number of ring homomorphisms to $\F_4$. While there are three ring homomorphisms
\[ \F_2 \left[(t_2^{C_4})^{\pm 1} \right] \longrightarrow \F_4, \] the ring $\F_4 \left[(t_1^{C_4})^{\pm 1} \right] \times \F_2 \left[(t_2^{C_4})^{\pm 1} \right]$ has nine $\F_4$-points.

We will now greatly generalize this observation to deal with all other possibilities for $L_{K(h)} \Res_e^{G} \BPGm$. The first step is to generalize Beaudry--Hill--Shi--Zeng's recursive formula (\cref{recollection: BHSZ computations}) for inductively describing the generators of \(\pi_*^e(\BPG)\). 

\begin{notation}
    Let $n \geq 1$.
    \begin{enumerate}[(i)]
        \item Let $r \geq 0$. Then, we write $\tb{P_r(T)} \coloneqq T + \sum_{i \geq 1} \gamma_n^r t_i^{C_{2^n}} T^{2^i} \in \pi_*^e(\BP^{( \! ( C_{2^n} ) \! )}) \llbracket T \rrbracket$.
        \item Let $\tb{P(T)} \coloneqq P_{2^{n-1}-1} \circ P_{2^{n-1}-2} \circ \cdots \circ P_0(T) \in \pi_*^e(\BP^{( \! ( C_{2^n}) \! )}) \llbracket T \rrbracket$. It is then of the form
        \[ P(T) = T + \sum_{k \geq 1} V_k T^{2^k} \]
        for some $\tb{V_k} \in \pi_*^e(\BP^{( \! ( C_{2^n}) \! )})$.
    \end{enumerate}
\end{notation}

We suggestively write $V_k$ due to the following result.

\begin{theorem} \label{theorem: vk is Vk}
    Let $n \geq 1$. In the above notation, we have \(v_k\equiv V_k \pmod{I_k}\). 
\end{theorem}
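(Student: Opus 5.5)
We prove the congruence \(v_k\equiv V_k \pmod{I_k}\) by induction on \(n\). The statement has to be read inside \(\pi_*^e(\BP^{( \! ( C_{2^n}) \! )})\), via the ring map \(\pi_*\BP\to \pi_*^e(\BP^{( \! ( C_{2^n}) \! )})\) of \cref{recollection: BHSZ computations}(ii).

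\textbf{Base case.} For \(n=1\) the composite \(P\) is just \(P_0(T)=T+\sum_i t_i^{C_2}T^{2^i}\). So \(V_k=t_k^{C_2}\), and the claim is exactly \cite[Proposition 3.5]{beaudryHillShiZeng2021modelsLubinTate}, recalled in \cref{recollection: BHSZ computations}(iii).

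\textbf{Inductive step.} Write \(P^{(n)}\) for the composite series built from the generators \(t_i^{C_{2^n}}\), with coefficients \(V_k^{(n)}\). We compare \(P^{(n-1)}\) and \(P^{(n)}\) via the inclusion
\[
\pi_*^e\BP^{( \! ( C_{2^{n-1}}) \! )}\to \pi_*^e\BP^{( \! ( C_{2^n}) \! )}.
\]
This inclusion carries \(\gamma_{n-1}\) to \(\gamma_n^2\) and sends \(t_i^{C_{2^{n-1}}}\) to the expression given by the recursive formula. The key observation is \cref{remark: coefficients composition}: modulo \(2\), the recursive formula of \cref{recollection: BHSZ computations}(iii) says precisely that
\[
P^{(n-1)}_0 \equiv P^{(n)}_1\circ P^{(n)}_0
\]
coefficientwise in degrees \(2^k\), modulo \(I_k\) in the \(k\)-th coefficient. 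Applying \(\gamma_n^{2r}\) to this congruence, and noting that \(I_k\) is \(\gamma_n\)-stable because the \(v_i\) come from \(\BP\) and are fixed by the action, we get
\[
P^{(n-1)}_r \equiv P^{(n)}_{2r+1}\circ P^{(n)}_{2r}.
\]
Composing over \(r=0,\dots,2^{n-2}-1\) then gives \(P^{(n-1)}\equiv P^{(n)}\), hence \(V_k^{(n-1)}\equiv V_k^{(n)}\). By the induction hypothesis, \(V^{(n-1)}_k\equiv v_k\), and the claim follows.

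\textbf{Main obstacle: bookkeeping of the ideals.} The two congruences hold modulo different ideals in different coefficients, and we must check they survive composition. The recursive formula for the \(k\)-th coefficient only holds modulo \(I_k\). When composing series in characteristic \(2\), the \(k\)-th coefficient of a composite involves only coefficients of index \(\le k\), appearing through Frobenius twists \(a_{k-j}^{2^j}\). Since \(I_{k'}\subseteq I_k\) for \(k'\le k\), an error in coefficient \(k'\) lying in \(I_{k'}\) contributes an error in \(I_k\). This uses the precise formula of \cref{remark: coefficients composition} applied iteratively, that the ideals \(I_k\) are increasing, and that the ideals are \(\gamma_n\)-stable.

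One also has to work modulo \(2\) throughout so that \cref{remark: coefficients composition} applies. This is harmless because \(2\in I_k\) for every \(k\ge 1\).
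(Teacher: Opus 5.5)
Your argument is correct in substance and is essentially the paper's proof, organized differently. You induct on \(n\): you pair \(P_{2r+1}\circ P_{2r}\) into the \(C_{2^{n-1}}\)-series and then invoke the statement for \(C_{2^{n-1}}\), transported along \(\pi_*^e\BP^{( \! ( C_{2^{n-1}}) \! )}\to\pi_*^e\BP^{( \! ( C_{2^{n}}) \! )}\). The paper instead fixes \(n\) and inducts on the grouping level \(s\), proving \(Q_{s,b,k}\equiv\gamma_n^{b2^s}t_k^{C_{2^{n-s}}}\pmod{I_k}\) for the partial composites \(Q_{s,b}\). Unwinding your induction gives exactly the paper's chain of congruences. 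Your explicit check that errors lying in \(I_{k'}\subseteq I_k\) propagate correctly through \cref{remark: coefficients composition} is a point the paper leaves implicit. For your version you should also state that the inclusion sends the Araki \(v_k\) to \(v_k\), since the unit maps of iterated norms compose.

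One justification is wrong: the \(v_i\) are \emph{not} fixed by \(\gamma_n\). Already for \(n=2\) the underlying ring is \(\pi_*(\BP\wedge\BP)\). There \(\gamma_2\) moves the image of \(v_1\) under one unit map to \(\pm\) its image under the other, i.e.\ to \(\pm\eta_R(v_1)\neq v_1\); the two agree only modulo \(2\). What is true, and all you need, is that each \(v_i\) is fixed modulo \(I_i\). By induction on \(k\) this gives \(\gamma_n(I_k)=I_k\). This is Landweber's invariance of the ideals \(I_k\), which the paper cites as \cite[Proposition 3.7]{beaudryHillShiZeng2021modelsLubinTate} at exactly the corresponding step. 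With that replacement your proof goes through.
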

\begin{proof}
    For \(0\leq s\leq n-1\), and \(0\leq b\leq 2^{n-1-s}-1\)
    consider the power series
    \[
    Q_{s,b}(T)\coloneqq P_{(b+1)2^s-1}\circ P_{(b+1)2^s-2}\circ \cdots   \circ P_{b2^s}(T)= T+\sum_{k\geq 1}Q_{s,b,k}T^{2^k} \in \pi_*^e(\BP^{( \! ( C_{2^n}) \! )}) \llbracket T \rrbracket.
    \]
    for some $Q_{s,b,k} \in \pi_*^e(\BP^{( \! ( C_{2^n} ) \! )})$. We prove the following statement by induction on $s$: 
    For every \(k\geq 1\), we have
    \[
    Q_{s,b,k}\equiv \gamma_n^{b2^s}t_k^{C_{2^{n-s}}} \pmod{I_k}.
    \]
    Let us first explain that the theorem follows from this claim: By definition, \(P(T)=Q_{n-1,0}(T)\). So the claim gives \(V_k\equiv t_k^{C_2} \pmod{I_k}\). Moreover, \(v_k\equiv t_k^{C_2} \pmod{I_k} \) by \cref{recollection: BHSZ computations}. Hence, we deduce \(v_k\equiv V_k \pmod{I_k}\). 

    For $s = 0$ we have $Q_{0,b}(T) = P_b(T)$ which is already of this form. Suppose that the claim holds for some \(s\) and fix \(b\) such that \(0\leq b \leq 2^{n-2-s}-1\).
    By construction, we have \(Q_{s+1,b}(T)=Q_{s,2b+1}\circ Q_{s,2b}(T)\).
    Looking at the coefficient of \(T^{2^k}\) in \(Q_{s+1,b}(T)\) with \cref{remark: coefficients composition}, we learn that
    \[
    Q_{s+1,b,k}
    \equiv
    Q_{s,2b,k}
    +
    Q_{s,2b+1,k}
    +
    \sum_{j=1}^{k-1}
    Q_{s,2b+1,j}
    \left(Q_{s,2b,k-j}\right)^{2^j} \pmod{2}.
    \]
    Using the formula for \(Q_{s,2b,k}\) and \(Q_{s,2b+1,k}\) from the inductive hypothesis and using $\gamma_{n-s} = \gamma_n^{2^s}$, we learn that
    \begin{align*}
        Q_{s+1,b,k} &\equiv
    \gamma_n^{b2^{s+1}}t_k^{C_{2^{n-s}}}
    +
    \gamma_n^{b2^{s+1}}\gamma_{n-s}t_k^{C_{2^{n-s}}}
    +
    \sum_{j=1}^{k-1}
    \left(\gamma_n^{b2^{s+1}}\gamma_{n-s}t_j^{C_{2^{n-s}}}\right)
    \left(\gamma_n^{b2^{s+1}}t_{k-j}^{C_{2^{n-s}}}\right)^{2^j}
    \pmod{I_k}.
    \\ &\equiv \gamma_n^{b2^{s+1}} \left( t_k^{C_{2^n-s}} + \gamma_{n-s} t_k^{C_{2^{n-s}}} + \sum_{j=1}^{k-1}
    \left(\gamma_{n-s}t_j^{C_{2^{n-s}}}\right)
    \left(t_{k-j}^{C_{2^{n-s}}}\right)^{2^j} \right) \pmod{I_k}
    \end{align*}
    Inside the brackets, we can now apply Beaudry--Hill--Shi--Zeng's formula (\cref{recollection: BHSZ computations}). Together with the invariance of \(I_k\) under the action \cite[Proposition 3.7]{beaudryHillShiZeng2021modelsLubinTate}, we conclude
    \[
    Q_{s+1,b,k}\equiv
    \gamma_n^{b2^{s+1}}t_k^{C_{2^{n-(s+1)}}} \pmod{I_k}.
    \]
    This completes the proof.
\end{proof}
Now we use this to count \(\mathbb{F}_4\)-points. By \cite[Proposition 7.1]{beaudryHillShiZeng2021modelsLubinTate}
    \[
    \pi_*^e \left(L_{K(h)}D^{-1}\BP^{(\!(G)\!)\langle m \rangle} \right)/I_h \cong \F_2[(t^{G}_m)^{\pm 1}].
    \]
and it only has three $\F_4$-points. We want to prove that \(\pi^e_*(L_{K(h)}\BPGm)/I_h\) has more $\F_4$-points. Let us first set up some notation. 
\begin{observation}
    For $i \geq 1$ and $r \geq 0$ let us write \(\tb{a_{r,i}} \coloneqq \gamma_{n}^rt_{i}^{G} \in \pi_*^e(\BP^{( \! ( C_{2^n} ) \! )}) \), so that we have $P_r(T) = T + \sum_{i \geq 1} a_{r,i} T^{2^i}$. By \cref{remark: Kh localization} and \cref{theorem: vk is Vk} we infer
    \[ \pi^e_* \left(L_{K(h)}\BPGm \right)/I_h
        \cong
    \mathbb F_2[a_{r,i}:0\leq r\leq 2^{n-1}-1, 1\leq i\leq m][V_h^{-1}]/(V_1,\ldots,V_{h-1}) \]
    and we are interested in its $\F_4$-points, i.e.~its ring homomorphisms to $\F_4$. In the language of classical algebra we have polynomials $V_1, \cdots, V_h \in \F_2[a_{r,i}]_{r,i}$ and we wish to choose an $\alpha_{r,i} \in \F_4$ for each $a_{r,i}$ such that 
    \[ V_1((\alpha_{r,i})_{r,i}) = V_2((\alpha_{r,i})_{r,i}) = \cdots = V_{h-1}((\alpha_{r,i})_{r,i}) = 0 \quad \text{and} \quad V_h((\alpha_{r,i})_{r,i}) \neq 0 \]
    in $\F_4$.
\end{observation}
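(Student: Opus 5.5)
The approach is to compute the ring $\pi^e_*(L_{K(h)}\BPGm)/I_h$ directly from \cref{remark: Kh localization}. We first identify the underlying homotopy of $\BPGm$ as a polynomial ring, then pass the quotient by $I_h$ through the completion and the localization, and finally rewrite $I_h$ and $v_h$ in terms of the $V_k$ using \cref{theorem: vk is Vk}. The statement about $\F_4$-points is then a formal consequence of the universal property of a quotient of a localized polynomial ring.

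\textbf{Step 1: the underlying homotopy of $\BPGm$.} By \cref{recollection: BHSZ computations}(i), $\pi^e_*\BPG\cong\Z_{(2)}[\gamma_n^r t_i^{G}]$. The twisted monoid quotient $\BPGm$ kills exactly the orbits $G\cdot \ol t_j$ for $j>m$. These underlying classes form a regular sequence, so $\pi^e_*\BPGm\cong \Z_{(2)}[a_{r,i}: 0\le r\le 2^{n-1}-1,\ 1\le i\le m]$, where we use the normalization of the $t_i^G$ of \cite{beaudryHillShiZeng2021modelsLubinTate} so that the images agree. By \cref{remark: Kh localization}, which applies to $\Res^G_e\BPGm$ by \cite[Corollary 3.3]{carrick2025higherrealktheoriesfinite}, we get
\[
\pi_*(L_{K(h)}\Res^G_e\BPGm)\cong v_h^{-1}\bigl(\pi^e_*\BPGm\bigr)^\wedge_{I_h}.
\]

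\textbf{Step 2: reducing modulo $I_h$.} The ring $S=\pi^e_*\BPGm$ is Noetherian and $I_h$ is finitely generated, so $S^\wedge_{I_h}/I_h\cong S/I_h$. Localization is exact and therefore commutes with taking this quotient. Hence the quotient is $(S/I_h)[v_h^{-1}]$.

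\textbf{Step 3: rewriting $I_h$ and $v_h$ via the $V_k$.} We show by induction on $k$ that $I_k=(2,V_1,\dots,V_{k-1})$ in $S$, where $V_k$ now denotes the image in $S$ obtained by setting $t_i^G=0$ for $i>m$. Indeed, $I_{k+1}=I_k+(v_k)$, and \cref{theorem: vk is Vk} gives $v_k\equiv V_k \pmod{I_k}$, so $I_{k+1}=I_k+(V_k)$. The same theorem gives $v_h\equiv V_h\pmod{I_h}$, so inverting $v_h$ in $S/I_h$ is the same as inverting $V_h$. Since $2\in I_h$, reducing mod $2$ first yields the stated isomorphism
\[
\pi^e_*(L_{K(h)}\BPGm)/I_h\cong \F_2[a_{r,i}][V_h^{-1}]/(V_1,\dots,V_{h-1}).
\]

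\textbf{Step 4: the $\F_4$-points.} Any ring homomorphism to $\F_4$ is automatically $\F_2$-linear. Such homomorphisms out of $\F_2[a_{r,i}][V_h^{-1}]/(V_1,\dots,V_{h-1})$ correspond exactly to tuples $(\alpha_{r,i})$ in $\F_4$ with $V_1,\dots,V_{h-1}$ vanishing at $(\alpha_{r,i})$ and $V_h$ not vanishing there.

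The main obstacle is Step 1. It requires that the images of $G\cdot\ol t_j$ for $j>m$ are, up to the normalization of the generators, exactly the polynomial generators $\gamma_n^r t_j^G$, so that the quotient is still polynomial on the $a_{r,i}$ with $i\le m$. The grading is harmless, since the formula in the statement is read as an ungraded ring.
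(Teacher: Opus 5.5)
Your proposal is correct and is essentially the paper's argument, which is the one-line inference from \cref{remark: Kh localization} and \cref{theorem: vk is Vk}; your induction $I_k=(2,V_1,\dots,V_{k-1})$ and your check that completion and localization commute with reducing modulo $I_h$ supply exactly the details the paper leaves implicit. One small correction: the $K(h)$-localization formula should be read as $(v_h^{-1}S)^\wedge_{I_h}$ (invert $v_h$, then complete), not as completion followed by inverting $v_h$ as you wrote it, but since completing at a finitely generated ideal does not change the quotient by that ideal, both readings reduce to $(S/I_h)[v_h^{-1}]$ modulo $I_h$, so your conclusion is unaffected.
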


\begin{notation}
    Consider a collection of elements \(\alpha=\{\alpha_{r,i}\in \F_4:0\leq r\leq 2^{n-1}-1, \ 1\leq i\leq m\}\).
    \begin{enumerate}[(i)]
        \item Let $r \geq 0$. We write $
        \tb{P^\alpha_{r}(T)} \coloneqq T+\sum_{i = 1}^m\alpha_{r,i} T^{2^{i}} \in \F_4 [T ]$.
        \item Let $\tb{P^{\alpha}(T)} \coloneqq  P^\alpha_{2^{n-1}-1}\circ P_{2^{n-1}-2}^{\alpha} \circ\cdots\circ P^\alpha_0(T) \in \F_4 [T]$. So
        \[ P^\alpha(T)=T+\sum_{k}V^\alpha_k T^{2^{k}}. \]
        for certain $V_k^{\alpha} \in \F_4$ described by polynomials in $\alpha_{r,i}$.
    \end{enumerate}
    So an \(\F_4\)-point of \(\pi^e_*(L_{K(h)}\BPGm)/I_h\) is a choice of elements \(\alpha\) such that \(V^\alpha_1=V^\alpha_2=\cdots =V^\alpha_{h-1}=0\) and \(V^\alpha_h\neq 0\). In other words, an $\F_4$-point is an $\alpha$ such that $P^{\alpha}(T) = T + V_h^{\alpha} T^{2^h}$ with $V_h^{\alpha} \neq 0$.
\end{notation}

\begin{proposition} \label{prop: F4 points computation}
    Let $G = C_{2^n}$ and $h = 2^{n-1}m$.
    \begin{enumerate}[(i)]
        \item Let $n \geq 3$ and $m = 1$. Then, the number of $\F_4$-points of \(\pi_*^e(L_{K(h)}\BPG\langle 1\rangle)/I_h\) is at least $3^{2^{n-2}}$.
        \item Let $n \geq 2$ and $m \geq 2$. Then, \(\pi^e_*(L_{K(h)}\BPGm)/I_h\) has at least four $\F_4$-points.
    \end{enumerate}
\end{proposition}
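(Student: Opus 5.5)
The plan is to turn the point count into a factorization problem for additive polynomials over $\F_4$. Additive polynomials in $\F_4[T]$ under composition form the twisted polynomial ring $\F_4\{\tau\}$: here $\tau$ stands for $T\mapsto T^2$, with $\tau a=a^2\tau$, and $f\circ g$ corresponds to the product $f\cdot g$. \cref{remark: coefficients composition} is exactly the coefficient formula for this product. Under this dictionary $P^\alpha_r=1+\sum_{i=1}^m\alpha_{r,i}\tau^i$ and $P^\alpha=P^\alpha_{2^{n-1}-1}\cdots P^\alpha_0$. By the notation preceding the proposition, an $\F_4$-point is a choice of $\alpha$ with $P^\alpha=1+c\,\tau^{h}$ and $c\neq 0$.

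Two facts drive everything. First, $\tau^2$ commutes with all of $\F_4$, since $a^4=a$. Hence every polynomial in $\tau^2$ with $\F_2$-coefficients is central in $\F_4\{\tau\}$, and the subring $\F_2[\tau]$ is commutative of characteristic $2$, so Frobenius behaves as usual there. Second, $(1+u\tau)(1+u\tau)=1+u^3\tau^2=1+\tau^2$ for every $u\in\F_4^\times$. The strategy is to pair up consecutive factors $P^\alpha_{2j+1}P^\alpha_{2j}$ so that each pair multiplies to the central element $1+\tau^{2m}$. Then
\[ P^\alpha=(1+\tau^{2m})^{2^{n-2}}=1+\tau^{2^{n-1}m}=1+\tau^h, \]
using centrality and characteristic $2$, so $V^\alpha_h=1\neq0$ and all lower $V^\alpha_k$ vanish. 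If $N$ denotes the number of ordered pairs $(A,B)$ of polynomials of the form $1+\sum_{i=1}^m\beta_i\tau^i$ with $AB=1+\tau^{2m}$, this produces at least $N^{2^{n-2}}$ distinct $\F_4$-points, because the $\alpha_{r,i}$ are just the coefficients of the chosen factors.

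For (i), where $m=1$, I would take $A=B=1+a\tau$ with $a\in\F_4^\times$. The second fact gives $AB=1+\tau^2$, so $N\geq3$ and there are at least $3^{2^{n-2}}$ points.

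For (ii), with $m\geq2$, I would write $M(\tau)=1+\tau+\dots+\tau^{m-1}\in\F_2[\tau]$. Frobenius in $\F_2[\tau]$ gives $M(\tau)^2=M(\tau^2)$, which is central, and $(1+\tau^2)M(\tau^2)=1+\tau^{2m}$. For $u\in\F_4^\times$ set $A_u=(1+u\tau)M(\tau)$ and $B_u=M(\tau)(1+u\tau)$. Then
\[ A_uB_u=(1+u\tau)\,M(\tau^2)\,(1+u\tau)=M(\tau^2)(1+u\tau)^2=1+\tau^{2m}, \]
and by symmetry $B_uA_u=M(\tau)(1+\tau^2)M(\tau)=1+\tau^{2m}$ as well. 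Both $A_u$ and $B_u$ have degree exactly $m$ and constant term $1$, so they are admissible factors. This gives the ordered pairs $(A_u,B_u)$ for $u\in\F_4^\times$ together with $(B_\omega,A_\omega)$.

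The step needing care is checking that these four pairs are distinct. I would compare low-degree coefficients. The $\tau$-coefficient of $A_u$ is $1+u$, which separates the three values of $u$. For $m\geq2$, the $\tau^2$-coefficient of $(1+u\tau)M$ is $1+u$, whereas that of $M(1+u\tau)$ is $1+u^2$, because $\tau\cdot u=u^2\tau$. So for $u=\omega$ a primitive element, $B_\omega$ differs from every $A_u$. Hence $N\geq4$, and therefore at least $4^{2^{n-2}}\geq4$ points.

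The main obstacle I expect is bookkeeping rather than a conceptual gap. I need to confirm that the composition order in $P^\alpha$ matches the product order in $\F_4\{\tau\}$; the constructions above are symmetric enough that either order works. I also need to confirm that the pairing argument is compatible with \cref{theorem: vk is Vk}, i.e.\ that the $V^\alpha_k$ are indeed the coefficients of $P^\alpha$ evaluated at $\alpha$. That is immediate from the definitions, since reduction to $\F_4$ is a ring map.
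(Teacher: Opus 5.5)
Your proof is correct and follows the paper's strategy. Both arguments pair consecutive factors so that each pair composes to $T+T^{2^{2m}}$, and then iterate. Part (i) is the paper's argument. Your pair $(A_\omega,B_\omega)$ is exactly the paper's $(R\circ S_m,\ S_m\circ R)$, where $R(T)=T+\omega T^2$ and $S_m(T)=\sum_{k=0}^{m-1}T^{2^k}$. The twisted-polynomial language makes the paper's ``algebra exercise'' $S_m\circ R\circ R\circ S_m=T+T^{2^{2m}}$ transparent, through the centrality of $M(\tau^2)$.

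The real difference is in the remaining three points of (ii). The paper takes $P^\alpha_0=P^\alpha_1=T+cT^{2^m}$ with $c\in\F_4^\times$, and $T+T^{2^m}$ for all later factors. However, $(1+c\tau^m)^2=1+c^{1+2^m}\tau^{2m}$, and this equals $1+\tau^{2m}$ for every $c$ only when $m$ is odd. For even $m$ and $n\geq 3$, two of those three choices leave $V^\alpha_{2m}=1+c^2\neq 0$ with $2m<h$, so they are not valid points. Your pairs $(A_u,B_u)$, $u\in\F_4^\times$, work for either parity of $m$, since $(1+u\tau)^2=1+\tau^2$ and $M(\tau)^2=M(\tau^2)$ is central. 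Choosing the $2^{n-2}$ pairs independently even gives $4^{2^{n-2}}$ points.

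One small slip: for $m=2$, the $\tau^2$-coefficients of $A_u$ and $B_u$ are $u$ and $u^2$, not $1+u$ and $1+u^2$. They still differ by $u+u^2=1$ at $u=\omega$, so $A_\omega\neq B_\omega$ and your count $N\geq 4$ stands.
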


\begin{proof}
    This is a pure algebra problem. Namely, we will look for $\F_4$-points $\alpha$, i.e.~choices of elements such that $P^{\alpha}(T) = T + V_h^{\alpha} T^{2^h}$ with $V_h^{\alpha} \neq 0$.
    \begin{enumerate}[(i)]
        \item Let $(c_0, c_1, \cdots, c_{2^{n-2}-1})$ be a tuple in $\F_4^{\times}$, so in particular $c_q^3 = 1$. We then pick
        \[ (\alpha_{0,1}, \alpha_{1,1}, \cdots, \alpha_{2^{n-1}-1,1}) = (c_0, c_0, c_1, c_1, \cdots, c_{2^{n-2}-1}, c_{2^{n-2}-1}). \]
        In this case, $P_{2q}^{\alpha}(T) = P_{2q+1}^{\alpha}(T) = T + c_q T^2$ for $q \geq 0$, so 
        \[ P_{2q+1}^{\alpha} \circ P_{2q}^{\alpha}(T) = T + c_q T^2 + c_q T^2 + c_q^3 T^4 = T + T^4. \]
        Iterating this procedure yields
        \[ T+V^\alpha_1T^2+\cdots+V^\alpha_hT^{2^h} = P^\alpha_{2^{n-1}-1}\circ\cdots\circ P^\alpha_0(T) =T+T^{2^h}. \]
        Thus, we have constructed $3^{2^{n-2}}$ examples of $\F_4$-points.
        \item We will construct polynomials
        \[ A_m(T) = T + \sum_{k=1}^{m} p_k T^{2^k} \in \F_4[T] \quad \text{and} \quad B_m(T) = T + \sum_{k = 1}^m q_k T^{2^k} \in \F_4[T] \]
        such that $B_m(A_m(T)) = T + T^{2^{2m}} $. Then, choosing
        \[ P_0^{\alpha} = A_m, \ P_1^{\alpha} = B_m, \ P_{q}^{\alpha} = T + T^{2^m} \]
        for $q \geq 2$ yields
        \[ P^{\alpha}(T) = P_{2^{n-1}-1}^{\alpha} \circ P_{2^{n-1}-2}^{\alpha} \circ \cdots \circ P_0^{\alpha}(T) = (T + T^{2^{2m}})^{\circ 2^{n-2}} = T + T^{2^h}. \]
        The choices of $p_k$ and $q_k$ are thus in particular $\F_4$-points of the desired form. Three such points are obtained by setting $p_k = q_k = 0$ for $k \leq m-1$ and $p_m = q_m \in \F_4^{\times}$. This is by the same computation as in (i). We need to construct one more. Let $\omega \in \F_4$ be such that $\omega^2 + \omega + 1 = 0$. We put 
        \[
            A_m(T) = T + \omega^2 \sum_{k=1}^{m-1} T^{2^k} + \omega T^{2^m} \quad \text{and} \quad B_m(T) = T + \sum_{k=1}^{m-1} q_k T^{2^k} + q_m T^{2^m} \]
        with
        \[ q_k = \begin{cases}
            \omega^2 \quad & k<m \text{ odd},
            \\ \omega & k<m \text{ even}
        \end{cases} \qquad \text{and} \qquad q_m = \begin{cases}
            \omega \quad & m \text{ odd},
            \\ \omega^2 & m \text{ even}.
        \end{cases} \]
        Let us now outline that these fulfill the desired properties.
        \[ S_m(T) = \sum_{k=0}^{m-1} T^{2^k} \quad \text{and} \quad R(T) = T + \omega T^2. \]
        One may compute $A_m(T) = R(S_m(T))$ and $B_m(T) = S_m(R(T))$. It is then an algebra exercise to verify that
        \[ B_m(A_m(T)) = S_m(R(R(S_m(T)))) = T + T^{2^{2m}}. \]
        \qedhere
    \end{enumerate}
\end{proof}

By \cite[Proposition 7.1]{beaudryHillShiZeng2021modelsLubinTate}
    \[
    \pi_*^e \left(L_{K(h)} D^{-1}\BP^{(\!(G)\!) \langle m \rangle} \right)/I_h \cong \F_2[(t^{G}_m)^{\pm 1}].
    \]
and it only has three $\F_4$-points, so using \cref{prop: F4 points computation} we deduce: 
\begin{theorem} \label{theorem: not higher real k theory}
    Let \(G=C_{2^n}\), and \(h=2^{n-1}m\). Suppose \(n\geq 3, m\geq 1\) or \(n\geq 2,m\geq 2\).
    Then, there is no equivalence between \(L_{K(h)}\Res^G_e\BPGm\) and \(L_{K(h)}\Res^G_e D^{-1} \BPGm\) of $\BP$-modules.
\end{theorem}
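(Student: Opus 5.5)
We argue by contradiction through an invariant that any equivalence of $\BP$-modules preserves. Suppose $\phi\colon L_{K(h)}\Res^G_e\BPGm \xrightarrow{\simeq} L_{K(h)}\Res^G_e D^{-1}\BPGm$ is an equivalence of $\BP$-modules. Then $\pi_*\phi$ is an isomorphism of $\BP_*$-modules, so it induces an isomorphism after reducing modulo $I_h=(2,v_1,\dots,v_{h-1})\subseteq \BP_*$. On both sides, $\pi_*/I_h$ is actually a ring, namely the quotient of the commutative ring $\pi^e_*$ of a complex oriented homotopy commutative ring (or the explicit rings computed above).

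We first want a ring isomorphism, which a module isomorphism alone does not provide. The cleanest route is to observe that $\pi_*/I_h$ of a $K(h)$-local $\BP$-module with regular $I_h$ is determined as an $\BP_*$-module. We then compare an invariant that depends only on the $\BP_*/I_h$-module structure, together with the action of $v_h$, which is invertible on both sides.

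Following the $C_4$ case treated above, the invariant we use is the number of $\F_4$-points. By \cite[Proposition 7.1]{beaudryHillShiZeng2021modelsLubinTate}, the target ring $\F_2[(t_m^G)^{\pm1}]$ has exactly three $\F_4$-points. By the Observation and \cref{prop: F4 points computation}, the source ring
\[ \F_2[a_{r,i}][V_h^{-1}]/(V_1,\dots,V_{h-1}) \]
has at least $3^{2^{n-2}}\ge 9>3$ points when $n\ge3$, $m=1$, and at least four points when $n\ge2$, $m\ge2$. This covers exactly the two cases in the statement.

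The main obstacle is promoting the $\BP_*$-module isomorphism to something that sees the ring structure. We would handle it with a more robust module-theoretic invariant that still separates the two sides. The target is a free module of rank one over $\F_2[v_h^{\pm1}]$, since in the target $t_m^G$ is a unit and a power of it agrees with $v_h$ up to a unit, by the formula for $V_h$. The source, by contrast, is a larger $\F_2[v_h^{\pm1}]$-module. For $C_4$ this was already visible as $\F_4[t_1^{\pm1}]\times\F_2[t_2^{\pm1}]$.

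Concretely, we would compare the ranks over the graded field $\F_2[v_h^{\pm1}]$ (degreewise dimension over $\F_2$ in a fixed degree modulo $|v_h|$). The $\F_4$-point count gives a lower bound on the source rank: the distinct points must be separated by the finite étale part, so the source rank is at least $9/3$, respectively at least $4/3>1$, whereas the target rank is $1$. This module-level comparison gives the contradiction without needing multiplicativity of $\phi$.
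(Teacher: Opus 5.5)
Your point-count step is the paper's entire proof: it notes that $\pi^e_*(L_{K(h)}D^{-1}\BPGm)/I_h\cong\F_2[(t^G_m)^{\pm1}]$ has three $\F_4$-points, invokes \cref{prop: F4 points computation} for the source, and stops. You are right that this only rules out a ring isomorphism, since a $\BP$-module equivalence gives just a graded $\BP_*/I_h$-module isomorphism; the paper does not address this. Your repair, however, has a gap: its key step rests on a false claim and an unjustified inequality.

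\emph{The false claim.} The target is not free of rank one over $\F_2[v_h^{\pm1}]$. Since $|t^G_m|=2(2^m-1)$ and $|v_h|=2(2^h-1)$, the target has $v_h=(t^G_m)^N$ with $N=(2^h-1)/(2^m-1)=\sum_{j=0}^{2^{n-1}-1}2^{jm}\geq 3$. So it is free of rank $N$: rank $5$ for $C_4$, $m=2$, and rank $15$ for $C_8$, $m=1$. Your own justification (``a power of it agrees with $v_h$'') gives rank equal to that power, not one. With this honest notion of rank, even granting the bound ``rank $\geq$ (number of $\F_4$-points)$/3$'', you only get source rank $\geq 2$ resp.\ $\geq 3$. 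That does not contradict rank $N$.

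\emph{The unjustified inequality.} If by rank you mean the dimension of a single graded piece, then ``target rank $1$'' is correct. But the source inequality is exactly what needs proof, and ``the distinct points must be separated by the finite \'etale part'' does not supply it. It also fails in an arbitrary fixed degree: for $C_4$, $m=2$, the source $\F_4[t_1^{\pm1}]\times\F_2[t_2^{\pm1}]$ has $9$ points but only dimension $2$ in degree $2$.

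\emph{How to close it with the degreewise invariant.} A graded $\BP_*$-module isomorphism preserves the $\F_2$-dimension of each graded piece of $\pi_*/I_h$, and every graded piece of $\F_2[(t^G_m)^{\pm1}]$ has dimension at most one. Suppose the same held for $S=\pi^e_*(L_{K(h)}\BPGm)/I_h$. Take a nonzero homogeneous $x$ of degree $k$ and put $d=|v_h|$. Then $x^d$ and $v_h^k$ lie in the same piece of dimension at most one, so either $x^d=0$ or $x^d=v_h^k$ is a unit. Thus every homogeneous element of $S$ is nilpotent or a unit. Since the nilradical is homogeneous, $S_{\mathrm{red}}$ is a graded field. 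Its degree-zero part is $\F_2$ (because $\dim S_0\leq 1$), and it contains the unit $v_h$ in nonzero degree, so $S_{\mathrm{red}}\cong\F_2[s^{\pm1}]$. Every ring map to $\F_4$ factors through $S_{\mathrm{red}}$, so $S$ would have exactly three $\F_4$-points. This contradicts the at least $9$ resp.\ $4$ points from \cref{prop: F4 points computation}. This argument turns the paper's ring-level count into the module-level obstruction the statement actually requires.
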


\subsection{Periodicity for Lubin--Tate theories}
\label{subsection: periodicity lubin tate}
To the best of the authors' knowledge, \cref{thm:BHSZ_refinement} gives the first construction of \(G\)-equivariant maps 
\[ \BPGm\longrightarrow E(k,\Gamma_h) \] 
factoring the Beaudry--Hill--Shi--Zeng \(\BPG\)-orientation of \(E(k,\Gamma_h)\) for all \(G=C_{2^n}\) and \(h=2^{n-1}m\).
The existence of these maps has already been used to great effect in the literature, see \cite{meier2024transchromaticphenomenaequivariantslice,duan2025periodicityfinitecomplexityhigher}.

In this section, we briefly summarize various applications of these maps in the literature, particularly to determine periodicities for \(E(k,\Gamma_h)\). We aim to be brief and will implicitly adopt the notation and terminology from \cite{meier2024transchromaticphenomenaequivariantslice,duan2025periodicityfinitecomplexityhigher}, which the interested reader may look into. The goal of this section is only to highlight instances where these maps have been used. We thank Lennart Meier and XiaoLin Danny Shi for helpful conversations.

Hill--Hopkins--Ravenel conjectured\footnote{See {\cite[Conjecture 1.1]{meier2024transchromaticphenomenaequivariantslice}} and the surrounding discussion for a written account of this conjecture.} that the slice spectral sequences computing \(E(k,\Gamma_h)^{hG}\) relate to each other for varying heights \(h\) and groups \(G\). Inspired by this, work of Meier--Shi--Zeng showed that  the slice spectral sequences for \(\BPGm\) exhibit such transchromatic phenomena.
\begin{theorem}[Transchromatic Isomorphism for \(\BPGm\), {\cite[Theorem A]{meier2024transchromaticphenomenaequivariantslice}}]
    Let \(G=C_{2^{n+1}}\) and \(m\geq 1\). 
    There is a shearing isomorphism $d_{2r-1} \leftrightsquigarrow d_r$ between the following regions of spectral sequences: 
\begin{enumerate}[(i)]
\item The $G$-slice spectral sequence of $\BPGm$ on or above the line of slope $1$; and  
\item The $(G/C_2)$-slice spectral sequence of $\BP^{(\!(G/C_2)\!)} \langle m \rangle$.
\end{enumerate}
\end{theorem}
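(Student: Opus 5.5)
The plan is to follow the strategy of Meier--Shi--Zeng and compare both spectral sequences through an intermediate \emph{localized} slice spectral sequence. Write \(G=C_{2^{n+1}}\), \(\bar G = G/C_2\), and let \(\lambda\) denote the pullback to \(G\) of the sign-type representation of \(\bar G\) on which the central \(C_2\) acts trivially. The relevant Euler class is \(a_{\sigma_2}\), where \(\sigma_2\) is the sign representation of \(G\) pulled back from \(G/C_{2^n}\).

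\textbf{Step 1 (localization isomorphism above slope \(1\)).} Consider the map of slice spectral sequences
\[
\mathrm{SliceSS}(\BPGm)\longrightarrow \mathrm{SliceSS}(a_{\sigma_2}^{-1}\BPGm).
\]
The slice cells of \(\BPGm\) are, by the slice theorem of \cite{HHR16} for twisted monoid quotients, indexed by induced regular-representation spheres \(G_+\wedge_H S^{k\rho_H}\wedge H\ul{\Z}\) on monomials in the \(G\cdot\ol t_1,\dots,G\cdot\ol t_m\). The factorization of \cref{example: BPGJ} as a twisted monoid quotient, and its \(\Coind^G_{C_2}\E_\sigma\)-structure, guarantee that this slice description and the multiplicative structure used below are available.

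I would compute the \(E_2\)-pages of both sides. The claim to establish is that \(a_{\sigma_2}\) is injective on the \(E_2\)-page and an isomorphism in the region on or above the line of slope \(1\). Below that line, the classes killed or created by the localization come from \(a_{\sigma_2}\)-torsion in \(\pi_\star H\ul{\Z}\) on slices induced from proper subgroups. A vanishing-line argument for such classes then shows that all differentials entering or leaving this region agree on both sides.

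\textbf{Step 2 (identification of the localized term).} Next I would identify \(a_{\sigma_2}^{-1}\BPGm\) with a spectrum whose \(C_2\)-geometric fixed points are governed by \(\bar G\). Since \(\Phi^{C_2}\MUR\simeq H\F_2[b_i]\), and norms interact with geometric fixed points via \(\Phi^{C_2}N_{C_2}^G\simeq N_e^{\bar G}\Phi^{C_2}\), one obtains
\[
\Phi^{C_2}\BPGm \simeq H\F_2\wedge \BP^{(\!(\bar G)\!)}\langle m\rangle
\]
as \(\bar G\)-spectra, at least additively. Combined with the equivalence between \(a_{\sigma_2}\)-localized \(G\)-spectra and \(\bar G\)-spectra on \(\widetilde{E}\mathcal F[C_2]\), this identifies the localized slice tower of \(\BPGm\) with the \(\bar G\)-slice tower of \(\BP^{(\!(\bar G)\!)}\langle m\rangle\).

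\textbf{Step 3 (doubling of degrees).} A \(G\)-slice of dimension \(2k\) built from \(S^{k\rho}\) restricts under \(\Phi^{C_2}\) to a \(\bar G\)-slice of dimension \(k\). This doubling converts filtration jumps of length \(2r-1\) into jumps of length \(r\), which is exactly the shearing \(d_{2r-1}\leftrightsquigarrow d_r\). I would verify this on the \(E_2\)-pages using the HHR formulas for \(\pi_\star H\ul{\Z}\) and \(\pi_\star H\F_2\). Compatibility of the differentials then follows because both spectral sequences are induced from equivalent towers.

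\textbf{Main obstacle.} The hardest step is Step 2. One must make the identification of towers, not merely of homotopy, compatible with the slice filtrations, and the quotienting by \(G\cdot\ol t_j\) for \(j>m\) has to be shown to commute with \(\Phi^{C_2}\) and with localization. Here I would use the cube/parametrized-colimit description of twisted monoid quotients from \cref{section: twisted monoid quotients via parametrized colimits}. Since \(\Phi^{C_2}\) and \(a_{\sigma_2}^{-1}\) preserve these colimits, the identification reduces to the case of \(\BPG\) itself, where it is the known computation of \(\Phi^{C_2}N_{C_2}^G\BPR\).
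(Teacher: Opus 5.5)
The paper does not prove this statement. It is quoted from Meier--Shi--Zeng \cite{meier2024transchromaticphenomenaequivariantslice} and used as a black box; the paper's own contribution here is the maps $\BPGm\to E(k,\Gamma_h)$ of \cref{thm:BHSZ_refinement}, which transport the result to Lubin--Tate theory. In particular, the $\Coind^G_{C_2}\E_\sigma$-structures you invoke play no role. So your argument has to stand on its own, and it has a genuine gap: Step~2 fails, and Step~3 rests on it.

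You claim $\Phi^{C_2}\BPGm\simeq H\F_2\wedge \BP^{(\!(\bar G)\!)}\langle m\rangle$. But the computation you cite gives $\Phi^{C_2}\BPG\simeq N_e^{\bar G}\Phi^{C_2}\BPR\simeq N_e^{\bar G}H\F_2$. Its underlying spectrum $H\F_2^{\wedge 2^n}$ has nonzero $\pi_1$, whereas $H\F_2\wedge \BP^{\wedge 2^{n-1}}$ has even homotopy, so the claim fails even additively.

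More fundamentally, no localized slice tower of $\BPGm$ can be equivalent to the $\bar G$-slice tower of $\BP^{(\!(\bar G)\!)}\langle m\rangle$. The former abuts to a rationally trivial target, since the geometric fixed points of $\BPGm$ at subgroups containing $C_2$ are $H\F_2$-modules. The latter abuts to $\pi_\star\BP^{(\!(\bar G)\!)}\langle m\rangle$, which contains $\Z_{(2)}$. The theorem is an isomorphism of spectral sequences on regions only, so ``both are induced from equivalent towers'' is not available. Nor is $\Phi^{C_2}$ of a $G$-slice a $\bar G$-slice: it carries a factor $\Phi^{C_2}H\underline{\Z}$ with underlying homotopy $\F_2[b]$, $|b|=2$.

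Step~1 uses the wrong localization once $n\geq 2$. The sign representation of $G$ has kernel $C_{2^n}\supseteq C_2$. Inverting its Euler class therefore smashes with $\widetilde{E}\mathcal{P}$, which computes $\Phi^G$; this is neither $\Phi^{C_2}$ nor $\widetilde E\mathcal F[C_2]=\widetilde EG$, as Step~2 assumes. It kills every slice cell induced from a proper subgroup, and some of these live above slope $1$.

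For example, take $G=C_8$ and $m=1$. The orbit of $N_{C_2}^{C_4}\overline{t}_1$ gives the cell $G_+\wedge_{C_4}S^{\rho_{C_4}}\wedge H\underline{\Z}$. It contributes the nonzero class $a_\sigma a_\lambda$ (Euler classes of $C_4$-representations) in stem $1$, filtration $3$. This class is exactly the sheared partner of the class induced from $a_\sigma\overline{t}_1\in\pi^{C_2}_1$ at $(1,1)$ in the $C_4$-slice spectral sequence of $\BP^{(\!(C_4)\!)}\langle 1\rangle$, and your localization loses it.

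A cell induced from $H$ contributes only in slopes $\leq |H|-1$. Hence the region on or above slope $1$ is governed by cells induced from subgroups containing $C_4$, i.e.\ by $\widetilde E\mathcal F[C_4]$. This means inverting $a_\lambda$ for $\lambda$ pulled back from a faithful representation of $\bar G$, not the sign representation.

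The comparison that can actually be made pairs $\Phi^{C_4}$ on the $G$-side with $\Phi^{C_4/C_2}$ on the $\bar G$-side:
\begin{itemize}
\item both $\Phi^{C_4}\BPG$ and $\Phi^{C_4/C_2}\BP^{(\!(\bar G)\!)}$ are $N_e^{G/C_4}H\F_2$;
\item for $H\geq C_4$ one has $\Phi^{C_4}(G_+\wedge_H S^{k\rho_H}\wedge H\underline{\Z})\simeq \Phi^{C_4/C_2}(\bar G_+\wedge_{H/C_2}S^{k\rho_{H/C_2}}\wedge H\underline{\Z})$;
\item the slice dimensions of these two cells are $k|H|$ and $k|H|/2$.
\end{itemize}
That halving is the real source of the shearing $s'=(s-(t-s))/2$, and hence of $d_{2r-1}\leftrightsquigarrow d_r$.

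What your sketch is missing, and where the actual work lies, is twofold. First, you must identify the two localized towers themselves, not just their $E_2$-terms, compatibly with the quotients by $G\cdot\overline{t}_j$ for $j>m$. Second, you must compare each original slice spectral sequence with its localization in the relevant region. This includes the boundary line, where torsion classes on the $G$-side sit opposite $\Z$'s on the $\bar G$-side.
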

Using the maps \(\BPGm\to E(k,\Gamma_h)\) supplied by \cref{thm:BHSZ_refinement}, naturality of the slice spectral sequence then gives an explicit correspondence between the differentials in the \(C_{2^{n+1}}\)-slice spectral sequence of \(E(k,\Gamma_h)\) in a region with the differentials in the \(C_{2^{n}}\)-slice spectral sequence of \(E(k,\Gamma_{h/2})\). In particular, Meier--Shi--Zeng deduce a transchromatic isomorphism and periodicity theorem for Lubin--Tate theories.

\begin{theorem}[Transchromatic Isomorphism for \(E(k,\Gamma_h)\), {\cite[Theorem 8.3]{meier2024transchromaticphenomenaequivariantslice}}]
    Let \(V\in \RO(G/C_2)\). 
    The class \(u_V\) is a permanent cycle in the slice spectral sequence of \(E(k,\Gamma_h)\) if and only if the class \(u_V\) is a permanent cycle in the slice spectral sequence of \(E(k,\Gamma_{h/2})\).
\end{theorem}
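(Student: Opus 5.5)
The plan is to deduce the statement from Meier--Shi--Zeng's transchromatic isomorphism for $\BPGm$, using the $G$-equivariant maps of \cref{thm:BHSZ_refinement} and naturality of the slice spectral sequence. Throughout I take $G=C_{2^{n+1}}$ and $h=2^n m$. Then $h/2=2^{n-1}m$ is the height attached to $G/C_2=C_{2^n}$ and the same $m$.

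\textbf{Step 1: comparison maps.} By \cref{thm:BHSZ_refinement} there are factorizations
\[
D^{-1}\BPGm\to E(k,\Gamma_h) \quad\text{and}\quad D'^{-1}\BP^{(\!(G/C_2)\!)}\langle m\rangle\to E(k,\Gamma_{h/2}).
\]
I would use the inverted versions rather than $\BPGm$ itself, since \cref{theorem: not higher real k theory} shows that $\BPGm$ is genuinely too large. Inverting $D$ is compatible with the slice filtration, because $D$ is a norm class in degree $*\rho_G$ and multiplication by such classes shifts slices regularly. So the slice spectral sequence of $D^{-1}\BPGm$ is the $D$-localization of that of $\BPGm$, and the Meier--Shi--Zeng shearing isomorphism $d_{2r-1}\leftrightsquigarrow d_r$ should persist after inverting $D$ and its $G/C_2$-analogue on the matched regions. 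I would verify that the shearing sends $D$ to the corresponding class $D'$ for $G/C_2$ up to units; this looks plausible from the definition of $D$ as a product of norms of classes that become invertible in Lubin--Tate theory.

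\textbf{Step 2: from $D^{-1}\BPGm$ to $E$.} Following Hill--Hopkins--Ravenel and Beaudry--Hill--Shi--Zeng, the slice spectral sequence of $E(k,\Gamma_h)$ in the relevant $\RO(G)$-graded range is obtained from that of $D^{-1}\BPGm$ by the $K(h)$-local/Borel completion. \cref{cor:BPGmEh_equivalence} gives the equivalence $\ul{\map}_G(\Sigma^\infty_+\mathrm{EG},L_{\Infl_e^GK(h)}D^{-1}\BPGm)\simeq E(k,\Gamma_h)^{hC(k,m)}$, and $E(k,\Gamma_h)$ is a finite free module over this. I would combine these to argue that the map of slice spectral sequences induced by Step 1 is injective on $E_2$ on the classes $u_V$, and that it detects differentials on them.

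\textbf{Step 3: $u_V$ classes.} For $V\in\RO(G/C_2)$, inflation places $u_V$ in the region on or above the line of slope $1$. Under the shearing isomorphism, $u_V$ corresponds to the $u_V$ class of the $G/C_2$-spectral sequence. Therefore $u_V$ supports no differential for $E(k,\Gamma_h)$ if and only if it supports none for $D^{-1}\BPGm$, if and only if it supports none for $D'^{-1}\BP^{(\!(G/C_2)\!)}\langle m\rangle$, if and only if it supports none for $E(k,\Gamma_{h/2})$.

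The main obstacle is Step 2: showing that a differential on $u_V$ in the spectral sequence of $E(k,\Gamma_h)$ already occurs in that of $D^{-1}\BPGm$, rather than only being detected after completion. I would first try to identify the $E_2$-pages as $\pi_\star$ of $D^{-1}\BPGm$ tensored with a faithfully flat extension in the relevant degrees. If that fails, I would instead use the strongly even / slice-vanishing structure of both sides to show that the target of any differential on $u_V$ lies in classes pulled back from $D^{-1}\BPGm$.
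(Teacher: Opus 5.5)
\textbf{Comparison with the paper's route.} Your skeleton is the one the paper sketches: Meier--Shi--Zeng's Theorem~A, the $G$-equivariant orientations of \cref{thm:BHSZ_refinement}, and naturality of the slice spectral sequence. The difference is that the paper applies naturality directly to $\BPGm\to E(k,\Gamma_h)$ and $\BP^{(\!(G/C_2)\!)}\langle m\rangle\to E(k,\Gamma_{h/2})$, so Theorem~A is used exactly as stated. Your reason for passing to $D^{-1}\BPGm$ does not apply. \cref{theorem: not higher real k theory} compares $K(h)$-localizations of underlying spectra and says nothing about slice spectral sequences or $u_V$-classes. The detour also costs you something. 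Step~1 now needs two facts that you only call plausible: that the shearing isomorphism is compatible with inverting $D$, and that it carries $D$ to $D'$ up to units.

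\textbf{The gap: Step~2 is not closed.} You flag it correctly, but it is where the proof lives. Naturality gives only one implication: if $u_V$ is a permanent cycle for the source of a map, its image is a permanent cycle for the target. Every link in your chain between a truncated Brown--Peterson spectrum and a Lubin--Tate theory must also be reversed. Both directions of the final equivalence need such a reversal. For example, $E(k,\Gamma_h)\Rightarrow E(k,\Gamma_{h/2})$ requires lifting permanence from $E(k,\Gamma_h)$ back to $D^{-1}\BPGm$.

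Concretely, you need a detection statement. If $u_V$ survives to $E_r$ in the source and $d_r(u_V)=y\neq 0$ there, then the image of $y$ must be nonzero on the $E_r$-page of the target.

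Your proposed mechanism does not supply this. \cref{cor:BPGmEh_equivalence} is an equivalence only after applying $\ul{\map}_G(\Sigma^\infty_+\mathrm{EG},L_{\Infl_e^GK(h)}(-))$. Neither $K(h)$-localization nor Borel completion is compatible with slice towers. So the corollary identifies neither the slice $E_2$-page nor the slice differentials of $E(k,\Gamma_h)$ with those of $D^{-1}\BPGm$; at most it relates homotopy fixed point data. Your fallbacks (faithful flatness on $E_2$, evenness arguments) are only named, not carried out.

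As written, the proposal therefore proves neither direction of the equivalence. The missing ingredient is exactly this detection input along the orientations. The paper records the deduction only in outline and defers those details to Meier--Shi--Zeng.
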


Using work by Hu--Kriz \cite{HuKrizReal}, Hill--Hopkins--Ravenel \cite{HillHopkinsRavenel2017C4RealKTheory}, and Hill--Shi--Wang--Xu \cite{hill2023slice} compute periodicities for \(\BPRn\), \(\BP^{(\!(C_4)\!)}\langle 1\rangle\), and \(\BP^{(\!(C_4)\!)}\langle 2\rangle\) respectively. Meier--Shi--Zeng use the Transchromatic Isomorphism to deduce a number of periodicities, and import them via the maps \(\BPGm\to E(k,\Gamma_h)\) to deduce periodicities for the Lubin--Tate theories.

\begin{theorem}[Periodicity for \(E(k,\Gamma_h)\), {\cite[Theorem B]{meier2024transchromaticphenomenaequivariantslice}}]
    Let \(G=C_{2^{n+1}}\), \(m\geq 1\), and \(h=2^nm\) and \(V\in \RO(G/C_2)\).
    Then, \((\vert V\vert -V)\) is an \(\RO(G)\)-graded periodicity for \(E(k,\Gamma_h)\) if and only if \((\vert V\vert -V)\) is an \(\RO(G/C_2)\)-graded periodicity for \(E(k,\Gamma_{h/2})\).
\end{theorem}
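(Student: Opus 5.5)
The plan is to reduce the statement to the Transchromatic Isomorphism for \(E(k,\Gamma_h)\) \cite[Theorem 8.3]{meier2024transchromaticphenomenaequivariantslice} recalled just above. The missing ingredient is a criterion, valid separately for the \(G\)-spectrum \(E(k,\Gamma_h)\) and the \(G/C_2\)-spectrum \(E(k,\Gamma_{h/2})\):

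\((\ast)\) for \(V\) in the relevant representation ring, \((|V|-V)\) is an \(\RO\)-graded periodicity if and only if the class \(u_V\) is a permanent cycle in the slice spectral sequence.

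Granting \((\ast)\) for both theories, the chain
\[ (|V|-V) \text{ periodicity for } E(k,\Gamma_h) \iff u_V \text{ permanent for } E(k,\Gamma_h) \iff u_V \text{ permanent for } E(k,\Gamma_{h/2}) \iff (|V|-V) \text{ periodicity for } E(k,\Gamma_{h/2}) \]
is the theorem. The middle equivalence is \cite[Theorem 8.3]{meier2024transchromaticphenomenaequivariantslice}, which itself rests on the Meier--Shi--Zeng slice isomorphism for \(\BPGm\) transported along the \(G\)-equivariant maps \(\BPGm\to E(k,\Gamma_h)\) of \cref{thm:BHSZ_refinement}. Throughout, \(V\in\RO(G/C_2)\) is viewed in \(\RO(G)\) by inflation.

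\textbf{Permanent cycle implies periodicity.} Suppose \(u_V\) survives to a class \(x\in\pi^G_{|V|-V}E(k,\Gamma_h)\). Multiplication by \(x\) gives a \(G\)-map \(\Sigma^{|V|-V}E(k,\Gamma_h)\to E(k,\Gamma_h)\) of \(E(k,\Gamma_h)\)-modules. The class \(u_V\) restricts to \(1\) on the underlying spectrum, so \(\Res^G_e x\) is a unit and the map is an underlying equivalence. Both source and target are Borel complete, since \(E(k,\Gamma_h)\) is, and an underlying equivalence between Borel complete \(G\)-spectra is an equivalence. The same argument applies verbatim to \(E(k,\Gamma_{h/2})\) as a \(G/C_2\)-spectrum.

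\textbf{Periodicity implies permanent cycle.} This is the direction I expect to be the main obstacle. A periodicity \(\Sigma^{|V|-V}E\simeq E\) of \(E\)-modules produces a class \(y\in\pi^G_{|V|-V}E\) whose restriction to the underlying spectrum is a unit in \(\pi_0^e E\). The task is to show that \(y\) is detected in the slice spectral sequence by a unit multiple of \(u_V\).
\begin{enumerate}[(1)]
    \item First, use that the slice (equivalently, the Borel homotopy fixed point) spectral sequence in degree \(|V|-V\) has \(E_2\)-page described via \(u\)- and \(a\)-classes. The only classes of filtration zero there are \(\pi_0\)-multiples of \(u_V\).
    \item Second, note that any class of positive filtration restricts to zero on the underlying spectrum. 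Since \(\Res^G_e y\) is a unit, \(y\) must therefore be detected in filtration zero, i.e.\ by \(c\,u_V\).
    \item Third, check that \(c\) is the restriction of a \(G\)-fixed unit, which is a permanent cycle. Then \(u_V=c^{-1}(c\,u_V)\) is a permanent cycle.
\end{enumerate}
The points needing care are the third step, and the fact that \(V\) is virtual, so that \(u_V\) is formed from \(u\)-classes of actual orientable representations and their inverses in the periodic theory. I would handle this by first treating the case \(V\) actual and orientable, then extending through the multiplicativity of both the periodicity group and the set of permanent \(u\)-classes.

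Once \((\ast)\) is established for both theories, the chain of equivalences above completes the proof.
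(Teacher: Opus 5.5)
The paper gives no proof of this statement. It quotes \cite[Theorem B]{meier2024transchromaticphenomenaequivariantslice} and only indicates that it follows from the transchromatic isomorphism, transported along the orientations $\BPGm\to E(k,\Gamma_h)$ of \cref{thm:BHSZ_refinement}. Your reduction to \cite[Theorem 8.3]{meier2024transchromaticphenomenaequivariantslice} via a criterion $(\ast)$ fits that outline. Your direction ``permanent $u_V$ $\Rightarrow$ periodicity'' is fine: the restriction is an underlying unit, and $\Sigma^{|V|-V}E$ and $E$ are Borel complete.

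The gap is the other half of $(\ast)$, which both implications of the theorem need: for $E(k,\Gamma_h)$ in one direction and for $E(k,\Gamma_{h/2})$ in the other. Your step (3) is not a verification; it is the whole content. If $y$ is detected by $c\,u_V$ with $c\in E_2^{0,0}$, you only learn that $c\,u_V$ is a permanent cycle. The Leibniz rule gives $c\,d_r(u_V)=-d_r(c)\,u_V$ on the first page where $c$ or $u_V$ supports a differential. So you need $c$ to be a permanent cycle; then $c^{-1}$ is one too, and $c$ is a unit on every page. All you know about $c$ is that it restricts to the $G$-invariant unit $\Res^G_e y\in\pi_0^eE$. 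That it is the image of a $G$-fixed unit which is a permanent cycle is exactly what must be proved, and you give no argument. Some genuine input is required here. For example, one could show that $c^{-1}$ again lies in $E_2^{0,0}$ and that every class in $E_2^{0,0}$ is a permanent cycle. This is plausible for $D^{-1}\BPGm$, whose $0$-slice has $\ul{\pi}_0$ spanned by norms and transfers of honest homotopy classes, but it would still need an argument passing to the completed theory $E(k,\Gamma_h)$.

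Two further points need repair. First, in step (2) you only exclude positive filtration, but in $\RO(G)$-graded slice spectral sequences negative filtration can contribute in degree $|V|-V$.
\begin{enumerate}[(i)]
\item For actual orientable $V$ it does not: a negative slice cell $G_+\wedge_H S^{k\rho_H}$ with $k<0$ contributes $[S^{|V|+|k|\rho_H},S^V\wedge H\ul{\Z}]^H=0$ by connectivity. You should say this.
\item For virtual $V$ it does. Take the $C_2$-spectrum $\ol{v}_1^{-1}\kuR$ and the unit in degree $4\sigma-4$; it restricts to a unit. The only filtration-zero classes in that degree are multiples of the generator of $\pi^{C_2}_{4\sigma-4}H\ul{\Z}$, and that generator restricts to $2$. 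So the unit is detected in negative filtration, and by no multiple of a class ``$u_{-4\sigma}$''.
\end{enumerate}
Accordingly, your extension from actual to virtual $V$ ``through multiplicativity'' only works for the implication from permanent $u_V$ to periodicity. Knowing that $(|V_1|-V_1)-(|V_2|-V_2)$ is a periodicity says nothing about the two summands separately. Moreover, $u_{V_1-V_2}$ is not even a class on $E_2$, since $u_{V_2}$ is not invertible in $\pi_\star H\ul{\Z}$.

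Second, the slice and homotopy fixed point spectral sequences are not equivalent. Since \cite[Theorem 8.3]{meier2024transchromaticphenomenaequivariantslice} concerns the slice spectral sequence, $(\ast)$ has to be proved there, or you must supply a comparison.
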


More recently, Duan--Hill--Li--Liu--Shi--Wang--Xu determine a large class of \(\RO(G)\)-graded periodicities for \(E(k,\Gamma_h)\).
In particular, Meier--Shi--Zeng's Transchromatic Isomorphism theorem for \(E(k,\Gamma_h)\) plays a key role in part of {\cite[Part (1) of Theorem B]{duan2025periodicityfinitecomplexityhigher}}.

\begin{theorem}[Periodicity for \(E(k,\Gamma_h)\), {\cite[Part (1) of Theorem B]{duan2025periodicityfinitecomplexityhigher}}]
    Let \(G=C_{2^n}\), and \(h=2^{n-1}m\). 
    Let \(\rho_G\) denote the regular representation of \(G\). 
    Let \(\lambda_i\) denote the \(2\)-dimensional $G$-representation corresponding to rotation by \(\frac{\pi}{2^i}\) for \(0 \leq i \leq n-1\).
    The \(G\)-spectrum \(E(k,\Gamma_h)\) has the following \(\RO(G)\)-graded periodicities:
    \begin{enumerate}[(i)]
        \item \(\rho_{G}\), the regular representation of \(G\);
        \item \(2^{2^{n-i}m+n-i+1}-2^{2^{n-i}m+n-i}\lambda_{n-i}\), \(1\leq i\leq n\).
    \end{enumerate}
\end{theorem}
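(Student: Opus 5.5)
The statement is quoted from \cite[Theorem B]{duan2025periodicityfinitecomplexityhigher}. I would prove it by induction on the group order: Meier--Shi--Zeng's transchromatic periodicity theorem (stated just above) reduces each $\lambda$-periodicity to the case where the rotation representation is faithful on a smaller cyclic group. The orientations $\BPGm \to E(k,\Gamma_h)$ of \cref{thm:BHSZ_refinement} are only used through that theorem, so this plan relies on Meier--Shi--Zeng and Duan et al.\ rather than on new input from this paper.

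For part (i) I would not use slice spectral sequences at all. By Hahn--Shi, $\Res^G_{C_2}E(k,\Gamma_h)$ is strongly even and Real oriented, and it has an invertible class $u\in\pi^{C_2}_{\rho}$: the Real lift of a periodicity generator of $\pi_2$, which is a unit in $\pi_{*\rho}^{C_2}$ because the $C_2$-homotopy in degrees $*\rho$ is detected on underlying. Because $E(k,\Gamma_h)$ is an $\E_\infty^G$-ring, it has the norm $N_{C_2}^G$, and $N_{C_2}^G u$ lies in $\pi^G_{\Ind_{C_2}^G\rho} = \pi^G_{\rho_G}$. Norms send units to units, since $N_{C_2}^G(u)N_{C_2}^G(u^{-1}) = N_{C_2}^G(1)=1$. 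Multiplication by $N_{C_2}^G u$ therefore gives the periodicity $\rho_G$.

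For part (ii), fix $1\le i\le n$. The representation $\lambda_{n-i}$ is pulled back along the quotient $G\to G/C_{2^{i-1}}\cong C_{2^{n-i+1}}$, on which it is faithful. The class $V_i \coloneqq 2^{2^{n-i}m+n-i+1}-2^{2^{n-i}m+n-i}\lambda_{n-i}$ has the form $|W|-W$ for $W = 2^{2^{n-i}m+n-i}\lambda_{n-i}$. I would apply the transchromatic periodicity theorem $i-1$ times, each time dividing by a $C_2$ and halving the height. This reduces the claim to showing that $2^{N+1}-2^{N}\lambda$ is a periodicity for $E(k,\Gamma_{h'})$ as a $C_{2^{n'}}$-spectrum. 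Here $n'=n-i+1$, $h'=2^{n'-1}m$, $\lambda$ is the faithful rotation, and $N=h'+n'-1$.

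For this faithful case I would exhibit $u_{2^{N}\lambda}$, or equivalently the corresponding $u_V$, as a permanent cycle in the slice spectral sequence. The strategy has two steps. First, use the HHR norm formula to write $u_{2^{N}\lambda}$ as a product of a norm of a $C_2$-class $u_{2^{N'}\sigma}$ and $a$-multiples, which are killed or survive in a controlled way. Second, use that $u_{2\sigma}^{2^{h'+1}}$ is a permanent cycle in the $C_2$-slice spectral sequence of $\Res^{G}_{C_2}E(k,\Gamma_{h'})$ (Hu--Kriz, via $\BPR\langle h'\rangle\to E$). Norms of permanent cycles are permanent cycles, so this would give the required class. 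Throughout, the orientations $\BPG\langle m\rangle\to E(k,\Gamma_h)$ of \cref{thm:BHSZ_refinement} are what allow slice differentials to be transported from the truncated theories, where they are computed, to Lubin--Tate theory.

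I expect the main obstacle to be this faithful-case exponent count. The norm of a $C_2$-periodicity naively gives a larger multiple of $\lambda$ than the sharp one stated, namely $2^{N}$. Achieving the sharp exponent needs the extra $u$-class relations from the slice differentials of Hill--Shi--Wang--Xu-type analysis, which is exactly what Duan--Hill--Li--Liu--Shi--Wang--Xu supply. For that step I would first try to import their finite-complexity argument directly rather than redo it.
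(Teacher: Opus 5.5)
The paper does not prove this statement. It quotes it from Duan--Hill--Li--Liu--Shi--Wang--Xu and only notes that Meier--Shi--Zeng's transchromatic theorem enters part of their proof, and through that theorem the maps $\BPGm\to E(k,\Gamma_h)$ of \cref{thm:BHSZ_refinement}. Measured against that, two parts of your proposal are correct and go beyond what the paper supplies. Your proof of (i) is complete: strong evenness makes the lift to $\pi^{C_2}_{\rho}$ of a unit in $\pi_2$ invertible, and its norm is a unit in degree $\operatorname{Ind}_{C_2}^{G}\rho=\rho_G$. Your reduction of (ii) to $i=1$ is also right. For $i\geq 2$ the representation $\lambda_{n-i}$ is pulled back from $G/C_2$, and the exponent $2^{n-i}m+n-i$ is unchanged under $(n,i)\mapsto(n-1,i-1)$.

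The gap is the faithful case $i=1$. It is the only non-formal input, and you hand it back to Duan et al.; that is a citation, as in the paper, not a proof. Your sketch for it also misidentifies the obstacle, in two ways.
\begin{enumerate}[(1)]
\item $u_{2^N\lambda}$ cannot be a norm times $a$-classes, since $u$-classes restrict to $1$ and $a$-classes to $0$ on the underlying level. The HHR norm formula expresses $N_{C_2}^{G'}u_{2\sigma}$ through $u$-classes only, and the complementary factor consists of $u$-classes of representations pulled back from $G'/C_2$.
\item The excess exponent comes from feeding in $u_{2\sigma}^{2^{h'+1}}$ instead of the sharp class $u_{2\sigma}^{2^{h'}}$. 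The latter detects the periodicity $2^{h'+1}-2^{h'+1}\sigma$ of $\Res^{G'}_{C_2}E(k,\Gamma_{h'})$.
\end{enumerate}

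With the sharp input your own route closes. Let $G'=C_{2^{n'}}$ with $n'\geq 2$.
\begin{enumerate}[(1)]
\item The norm of that unit is a unit in degree $2^{h'+1}\bigl(\R[G'/C_2]-\operatorname{Ind}_{C_2}^{G'}\sigma\bigr)$.
\item $\operatorname{Ind}_{C_2}^{G'}\sigma$ is the sum of the $2^{n'-2}$ rotations by $2\pi j/2^{n'}$ with $j$ odd. Each is $2$-locally equivalent to $\lambda$ via $z\mapsto z^{j}$, so modulo periodicities the degree is $2^{h'+1}\R[G'/C_2]-2^{N}\lambda$. This has exactly the sharp coefficient.
\item The remainder $2^{h'+n'}-2^{h'+1}\R[G'/C_2]$ has the form $|W|-W$ with $W$ pulled back from $G'/C_2$. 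By Meier--Shi--Zeng and (i) for $G'/C_2$, it is a periodicity once the integer $2^{h'+n'}$ is a periodicity of $E(k,\Gamma_{h'/2})$ over $C_{2^{n'-1}}$.
\item That holds because $2^{h'+n'}$ is a multiple of $2^{h'/2+n'}$. The latter is the integer periodicity obtained by combining (i) and (ii) for the smaller group.
\end{enumerate}
So an induction on $n$, starting from the $C_2$ case, proves (ii) from norms and Meier--Shi--Zeng alone, without importing the finite-complexity analysis.
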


These are by no means all known applications of the orientation $\BPGm \to E(k,\Gamma_h)$, but we hope to have demonstrated the relevance of the map and in particular the strength of our structured orientations to be able to produce it.

\appendix

\section{Twisted monoid quotients via equivariant cubes}
\label{section: twisted monoid quotients via parametrized colimits}
Our discussion of twisted monoid quotients was fully in the language of Hill--Hopkins--Ravenel \cite{HHR16}, but we have also used the language of parametrized higher category theory. In the spirit of this, we record a universal property of twisted monoid quotients in terms of equivariant cubes. 

Non-equivariantly, quotients by multiple elements can be encoded as certain total cofibers of cubes. Twisted monoid quotients (\cref{construction:TMR2}) modify this construction by putting suitable actions on the cube. We make it precise in this appendix. Let us first define the relevant equivariant cubes.

\begin{construction}[{\cite[Construction 3.2.3, Notation 3.2.7]{hilman2024parametrisednoncommutativemotivesequivariant}}]
    Let $H \leq G$ be a subgroup.
    \begin{enumerate}[(i)]
        \item Let $\ul{\Delta}^1$ denote the constant $H$-category with value $\Delta^1$. Then, the $G$-$\infty$-category $\Coind_H^G \ul{\Delta}^1$ is a cube in each level, and vertices are given by strings of $0$'s and $1$'s of a suitable length. 
        We denote by $\tb{\ul{J}_H^G} \subseteq \Coind_H^G \ul{\Delta}^1$ the $G$-full subcategory, which on each level removes the vertex $1\cdots1$.
        \item Let $\ul{\C}^{\otimes}$ be a $G$-symmetric monoidal $\infty$-category and $(A \to B) = \phi \colon \ul{\Delta}^1 \to \Res_H^G \ul{\C}$ be a map in $\Res_H^G \ul{\C}$. Then, we write
        \[ \tb{\underset{\underline{J}_H^G}{\ul{\colim}} \ N_H^G (A \to B)} \coloneqq \ul{\colim} \left( \ul{J}_H^G \subseteq \Coind_H^G \ul{\Delta}^1 \xrightarrow{\Coind_H^G \phi} \Coind_H^G \Res_H^G \ul{\C} \xrightarrow{N_H^G} \ul{\C} \right).  \]
    \end{enumerate}
\end{construction}

Let $H \leq G$ and let $R$ be an $\E_{\infty}^G$-ring spectrum. Recall that the norm in $\ul{\LMod}_R(\ul{\Sp}^G)^{\otimes}$ is given by $N_R^{H \to G} M \simeq R \otimes_{N_H^G R} N_H^G M$, see e.g.~\cite[Example 3.3.14]{quinnZhu2026multiplicativeequivariantthomspectra}.

\begin{proposition}
    Let $H \leq G$ and let \(R\) be an \(\E^G_\infty\)-ring spectrum, \(A\) be an $\E_1$-\(R\)-algebra with a norm map $N_R^{H \to G} \Res_H^G A \to A$.\footnote{This is for example provided by a $\Coind_H^G \E_1$-$R$-algebra structure, see \cref{footnote: Coind}.} Let \(x\in \pi^H_V(R)\), then there is a cofiber sequence 
    \begin{center}
        \begin{tikzcd}
            \underset{\underline{J}_H^G}{\ul{\colim}} \left(N_R^{H \to G} \left(\Sigma^{V}\Res^G_H A{[x]} \to \Res^G_H A{[x]}\right)\right) \arrow[r] & A \arrow[r] & A/(G\cdot x).
        \end{tikzcd}
    \end{center}
    in $\LMod_R(\Sp^G)$.
\end{proposition}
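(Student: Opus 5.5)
Throughout, $A[x]$ denotes the free $\E_1$-$A$-algebra on $x$, i.e.\ $A \otimes_R R[x]$ (this is how the statement's $\Res^G_H A[x]$ should be read). The plan is to reduce to the non-equivariant single-element case and then apply the norm, using that a relative tensor product over a norm of a free algebra is a cube colimit. Recall first that for $H$-spectra the quotient by one element sits in a cofiber sequence $\Sigma^V B \xrightarrow{x} B \to B/x$. Here $B/x \simeq B \otimes_{B[x]} B$, and the map $\Sigma^V B[x] \xrightarrow{x} B[x] \to B$ is the standard two-term free resolution of $B$ over $B[x]$ with respect to the augmentation. I will phrase this as: $B$ is the cofiber of $\Sigma^V B[x] \to B[x]$ as left $B[x]$-modules, and base change along the second structure map (sending $x \mapsto x$) recovers $\Sigma^V B \to B$.

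\textbf{Step 1.} Write $A/(G\cdot x) = A \otimes_{N_R^{H\to G}\Res^G_H A[x]} A$, with $A[G\cdot x] = N_R^{H \to G} \Res_H^G A[x]$ acting on $A$ via the given norm map $N_R^{H\to G}\Res^G_H A \to A$. This is consistent with \cref{construction:TMR2} via the formula $N_R^{H\to G}M \simeq R\otimes_{N_H^G R}N_H^G M$.

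\textbf{Step 2.} Resolve the augmentation module: since the cofiber of $\Sigma^V\Res^G_H A[x] \to \Res^G_H A[x]$ is $\Res^G_H A$, we get a map $\phi\colon \ul{\Delta}^1 \to \Res^G_H \ul{\LMod}_R$. The key input is the equivariant distributivity of norms over cofibers. The norm of a cofiber $N(\mathrm{cofib}(\phi))$ is the total cofiber of the cube $\Coind_H^G\ul{\Delta}^1 \to \ul{\C}$ obtained by norming, which equivalently says that $N(\mathrm{cofib}\,\phi)$ is the cofiber of $\ul{\colim}_{\ul{J}_H^G} N(\phi) \to N(\text{target})$. This is the parametrized version of the fact that $\bigotimes_i \mathrm{cofib}(f_i)$ is the total cofiber of the tensor cube. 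I would cite Hilman's treatment of equivariant cubes (the reference from which the construction is taken), or prove it by induction using that $N_R^{H\to G}$ is $G$-symmetric monoidal and preserves sifted colimits. Applied here, $N_R^{H\to G}\Res^G_H A \simeq \mathrm{cofib}\bigl(\ul{\colim}_{\ul{J}_H^G} N_R^{H\to G}(\phi) \to N_R^{H\to G}\Res^G_H A[x]\bigr)$, as left $A[G\cdot x]$-modules.

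\textbf{Step 3.} Base change this cofiber sequence along $A[G\cdot x] \to A$ (the map induced by $x\mapsto x$ followed by the norm map). Since $A \otimes_{A[G\cdot x]} -$ is exact, it preserves the cofiber and the parametrized colimit. We then need the following identifications.
\begin{enumerate}[(a)]
\item $A\otimes_{A[G\cdot x]} A[G\cdot x] \simeq A$.
\item $A\otimes_{A[G\cdot x]} N_R^{H\to G}\Res^G_H A \simeq A/(G\cdot x)$; here the module $N_R^{H\to G}\Res^G_H A$ maps to $A$ via the augmentation norm map, so this step needs care.
\item Each vertex of the cube base-changes to the corresponding vertex built from the $A$-module maps. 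The resulting cube is the stated $\ul{J}_H^G$-colimit, with $A$ acting through the norm map.
\end{enumerate}
This yields the cofiber sequence $\ul{\colim}_{\ul{J}} \to A \to A/(G\cdot x)$ in $\LMod_R(\Sp^G)$. Reading the statement literally (the source cube is built from $\Res^G_H A[x]$, not base-changed), it may be cleaner to skip the base change and directly compare cofibers, using $A/(G\cdot x)$ computed via the bar construction.

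\textbf{Main obstacle.} I expect the hardest step to be Step 2: making the parametrized distributivity ``norm of a cofiber equals the total cofiber of the normed cube'' precise, including compatibility of the $\ul{J}_H^G$-colimit with module structures. I would first try to deduce it from the universal property of $\Coind_H^G\ul{\Delta}^1$ as the $G$-cube, checking it levelwise on fixed points via the double coset formula.
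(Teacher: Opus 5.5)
There is a genuine gap, and it comes from resolving the wrong module. Because Step 2 resolves $\Res^G_H A$ over $\Res^G_H A[x]$, Step 3 has to base change along $N_R^{H\to G}\Res^G_H A[x]\to A$. For $A$ to be a right module over $N_R^{H\to G}\Res^G_H A[x]$, this map must be $\E_1$, so the norm map $N_R^{H\to G}\Res^G_H A\to A$ would have to be multiplicative. The hypothesis only provides a map. Even for a $\Coind_H^G\E_1$-algebra the norm map is not multiplicative in general. For $G=C_2$, $H=e$, multiplicativity would force every $a$ to graded-commute with every $\gamma(b)$ in $\pi_*\Res^{C_2}_e A$, which fails for $A=N_e^{C_2}B$ with $\pi_*B$ non-commutative.

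Step 1 has the same defect and, where it makes sense, describes the wrong object. Since $N_R^{H\to G}\Res^G_H A[x]\simeq N_R^{H\to G}\Res^G_H A\otimes_R R[G\cdot x]$ with $R[G\cdot x]=N_R^{H\to G}\Res^G_H R[x]$, the tensor product $A\otimes_{N_R^{H\to G}\Res^G_H A[x]}A$ picks up a factor $A\otimes_{N_R^{H\to G}\Res^G_H A}A$. For $G=C_2$, $H=e$, $R=\S$ and $A$ an $\E_\infty^{C_2}$-ring you get $\THR(A)\otimes_{\S[C_2\cdot x]}\S$ rather than $A/(C_2\cdot x)$. This is also not what the twisted monoid ring construction does: it norms $R[\overline{x}_i]$ for the $C_2$-ring $R$, not the restriction of the normed ring. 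Granting the module structure, your (b) actually computes $A\otimes_{R[G\cdot x]}R$, which is the correct quotient $A\otimes_R R/(G\cdot x)$. But that contradicts your Step 1 formula rather than confirming it.

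The missing idea is to resolve $R$ instead of $A$. As $R$-modules, your Step 2 sequence is just $N_R^{H\to G}\Res^G_H A\otimes_R(-)$ applied to the normed resolution of $R$, and that extra factor is what causes the trouble. The paper's route is as follows.
\begin{enumerate}[(1)]
\item Start from $\Sigma^V\Res^G_H R[x]\to\Res^G_H R[x]\to\Res^G_H R$ in $\LMod_R(\Sp^H)$ and apply $N_R^{H\to G}$. Since $\underline{\LMod}_R(\underline{\Sp}^G)^{\otimes}$ is distributive, Hilman's Proposition~3.2.8 applies directly, so the step you flagged as the main obstacle is a citation.
\item Using $N_R^{H\to G}\Res^G_H R\simeq R$, this gives a cofiber sequence $\colim_{\underline{J}^G_H}N_R^{H\to G}(\cdots)\to R[G\cdot x]\to R$.
\item Base change along $R[G\cdot x]\to N_R^{H\to G}\Res^G_H A\to A$. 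Only this restriction needs to be an $\E_1$-map, and it is one, since it factors through $R[G\cdot x]\to R\to A$ via the $\E_\infty^G$-ring $R$.
\item Base change preserves parametrized colimits and cofibers, and $A\otimes_{R[G\cdot x]}R=A/(G\cdot x)$.
\end{enumerate}

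Accordingly, the cube in the statement must be read as $A\otimes_{R[G\cdot x]}(-)$ applied to the normed cube for $R$. Your suggested alternative, skipping the base change and using the literal cube $N_R^{H\to G}(\Sigma^V\Res^G_H A[x]\to\Res^G_H A[x])$, fails already for $H=G$ and $x=0$. There the composite $\Sigma^VA[x]\to A$ is zero, so its cofiber has infinitely many summands, while $A/x\simeq A\oplus\Sigma^{V+1}A$.
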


\begin{proof}
    Consider the cofiber sequence \(\Sigma^{V} \Res_H^GR[x]\to \Res_H^G R[x]\to \Res_H^G R\) in $\LMod_R(\Sp^H)$.
    Since $\ul{\LMod}_R(\ul{\Sp}^G)^{\otimes}$ is a distributive symmetric monoidal $G$-$\infty$-category \cite[Theorem A]{quinnZhu2026multiplicativeequivariantthomspectra}, we can apply Hilman's result \cite[Proposition 3.2.8]{hilman2024parametrisednoncommutativemotivesequivariant}. According to it, applying $N_R^{H \to G}$ induces a cofiber sequence
    \begin{center}
        \begin{tikzcd}
            \underset{\underline{J}_H^G}{\ul{\colim}}\left(N^G_H\left(\Sigma^{V}{R[x]\to R[x]}\right)\right) \arrow[r] &  {R[G\cdot x]} \arrow[r] & R,
        \end{tikzcd}
    \end{center}
    Since base change preserves parametrized colimits \cite[Proposition 3.3.11]{quinnZhu2026multiplicativeequivariantthomspectra}, by base change along the map \(R[G\cdot x]\to N_R^{H \to G} A \to A\) we obtain the desired cofiber sequence.
\end{proof}

\begin{example}
    For $R = \S$ we obtain the cofiber sequence
    \begin{center}
        \begin{tikzcd}
            \underset{\underline{J}_H^G}{\ul{\colim}} \left(N_H^G \left(\Sigma^{V}\Res^G_H A{[x]} \to \Res^G_H A{[x]}\right)\right) \arrow[r] & A \arrow[r] & A/(G\cdot x).
        \end{tikzcd}
    \end{center}
    in $\Sp^G$.
\end{example}

\begin{remark}
    The proof relies on Hilman's \cite[Proposition 3.2.8]{hilman2024parametrisednoncommutativemotivesequivariant} which is written for \(G\)-symmetric monoidal categories.
    Examining Hilman's proof, it is enough to know that only the specific norm functor \(N^{H\to G}_{R}\) from \cite[Theorem A]{quinnZhu2026multiplicativeequivariantthomspectra} is distributive. In particular, the assumption that \(R\) is an \(\E^G_\infty\)-ring spectrum can be significantly weakened. 
\end{remark}

\bibliographystyle{alpha}
\bibliography{main}

\Addresses
\end{document}